\newtheorem{Def}{Definition}[section]
\newtheorem{Thm}[Def]{Theorem}
\newtheorem{Lem}[Def]{Lemma}
\newtheorem{Cor}[Def]{Corollary}
\newtheorem{Rem}[Def]{Remark}
\numberwithin{equation}{section}
\newcommand{\AAa}{\mathbb{A}}
\newcommand{\BB}{\mathbb{B}}
\newcommand{\NN}{\mathbb{N}}
\newcommand{\RR}{\mathbb{R}}
\newcommand{\SSp}{\mathbb{S}}
\newcommand{\ZZ}{\mathbb{Z}}
\newcommand{\cB}{\mathcal{B}}
\newcommand{\cF}{\mathcal{F}}
\newcommand{\cI}{\mathcal{I}}
\newcommand{\cK}{\mathcal{K}}
\newcommand{\cO}{\mathcal{O}}
\newcommand{\cU}{\mathcal{U}}
\newcommand{\cV}{\mathcal{V}}
\newcommand{\cW}{\mathcal{W}}
\newcommand{\cZ}{\mathcal{Z}}
\newcommand{\scA}{\mathscr{A}}
\newcommand{\scE}{\mathscr{E}}
\newcommand{\scH}{\mathscr{H}}
\newcommand{\scM}{\mathscr{M}}
\newcommand{\scN}{\mathscr{N}}
\newcommand{\scR}{\mathscr{R}}
\newcommand{\scU}{\mathscr{U}}
\newcommand{\scX}{\mathscr{X}}
\newcommand{\bfC}{\textbf{C}}
\newcommand{\bfF}{\textbf{F}}
\newcommand{\bfG}{\textbf{G}}
\newcommand{\bfI}{\textbf{I}}
\newcommand{\bfL}{\textbf{L}}
\newcommand{\bfM}{\textbf{M}}
\newcommand{\wsto}{\overset{\ast}{\rightharpoonup}}
\author{Zhihan Wang} 
\address{Department of Mathematics, Princeton University, Princeton, NJ 08544}
\email{zhihanw@math.princeton.edu}
\title{Min-max Minimal Hypersurfaces with Obstacle}
\begin{document}

  \begin{abstract}
   We study min-max theory for area functional among hypersurfaces constrained in a smooth manifold with boundary. A Schoen-Simon-type regularity result is proved for integral varifolds which satisfies a variational inequality and restricts to a stable minimal hypersurface in the interior. Based on this, we show that for any admissible family of sweepouts $\Pi$ in a compact manifold with boundary, there always exists a closed $C^{1,1}$ hypersurface with codimension$\geq 7$ singular set in the interior and having mean curvature pointing outward along boundary realizing the width $\bfL(\Pi)$.
  \end{abstract}

  \maketitle   

    \section{Introduction} 
    
     Min-max theory for minimal hypersurfaces in a closed manifold is established by Almgren-Pitts in \cite{Almgren65_Varifolds, Pitts81} and by Schoen-Simon \cite{SchoenSimon81} for regularity in higher dimensions. In recent years, the abundance of closed minimal hypersurfaces has been well studied by \cite{MarquesNeves17_Infinite, IrieMarquesNeves18, MarquesNevesSong19, Song18_YauConj, LiYy19_Exis}.     
     
     We consider in this paper the following min-max problem in a manifold with boundary.\\

   \textbf{Question:} Can we find a \textit{minimal} hypersurface constrained in a compact smooth domain $M$ in an oriented Riemannian manifold $(\tilde{M}, g)$? \\
     
     Clearly, the answer is no in general if one only focus on closed surfaces with vanishing mean curvature. Allowing an extension of subjects we would like to find, two distinct aspects of this problem are frequently studied.\\
     
\noindent   \textbf{$\bullet$ Free boundary problem} 
     A minimal hypersurface with free boundary in $M$ is a hypersurface with boundary in $\partial M$, perpendicular to $\partial M$ and having vanishing mean curvature. The study of free boundary minimal surfaces has a long history, we refer the readers to \cite{ZhouLi16_FreeBdyMinmax} for a beautiful survey of the development. \\

\noindent   \textbf{$\bullet$ Obstacle problem}
     Let $(\tilde{M}, g) = (\RR^{n+1}, |dx|^2)$, $E_0\subset \RR^{n+1}$ be a bounded Caccioppoli set; Consider a minimizer $\hat{E}$ of the perimeter of Caccioppoli sets among \[
       \{E\subset \RR^{n+1}: E\Delta E_0 \subset M \}    \]
     The regularity of boundary of $\hat{E}$ is well studied in \cite{Miranda71, BarozziMassari82, Tamanini82, Lin85}, and a non-parametric version is studied in \cite{CaffarKinderle80, BrezisKinderl74, LewyStampacchia71, GiaquintaPepe71} under weaker assumptions on $M$. We emphasis here that in general, the optimal regularity for $\partial \hat{E}$ is $C^{1,1}$ instead of $C^2$ where $\partial \hat{E}$ touch $\partial M$, and the mean curvature of $\partial \hat{E}$ does not necessarily vanish. We refer the readers to \cite{Giusti72} for a brief introduction and generalizations when $M$ is not necessarily smooth. For later use in this paper, see subsection \ref{Subsection_Obst}.
     
     The min-max analogue of obstacle problem for area functional and other parametric elliptic integral is also studied recent year under further convexity assumption on $M$, and hence the stationarity of solutions valid by maximum principle. See \cite{Song18_YauConj, BernsteinWang20, DeLellisRamic18, Montezuma18}.\\
     
     The first goal of this paper is to prove the following,
     \begin{Thm} \label{Thm_Intro_Exist Minmax Obst}
     Let $m\geq 1$, $n\geq 2$, $\cZ_n(M)$ be the space of integral cycles supported in $M$, endowed with $\bfF$-metric (see section \ref{Subsection_Notations}), $\Phi_0: I^m\to (\cZ_n(M), \bfF)$ be a continuous map. Let $\Pi$ be the family of sequences of $\bfF$-continuous maps $S = \{\Phi_i: I^m\to \cZ_n(M)\}_{i\geq 1}$ which $C^0$-tends to $\Phi_0|_{\partial I^m}$ restricted on $\partial I^m$. Let \[
      \bfL(\Pi):= \inf_{S = \{\Phi_i\}_{i\geq 1}\in \Pi} \limsup_{i\to \infty}\sup_{x\in I^m} \bfM(\Phi_i(x)); \ \ \ \ \bfL_{\Phi_0, \partial I^m} := \sup_{x\in \partial I^m} \bfM(\Phi_0(x))     \]
     If $\bfL(\Pi)>\bfL_{\Phi_0, \partial I^m}$, then there exists a \textbf{constrained embedded minimal hypersurface} in $M$ with the same mass as $\bfL(\Pi)$ (see definition \ref{Def_Constr emb_hypersurface}). In particular, there's an immersed $C^{1,1}$ hypersurface $\Sigma\hookrightarrow M$ such that 
     \begin{enumerate}
     \item[•] $\scH^n(\Sigma) = \bfL(\Pi)$
     \item[•] $\Sigma$ is minimal, locally stable and embedded with multiplicity and codim $\geq 7$ singularity in the interior of $M$;
     \item[•] $\Sigma$ is multiple $C^{1,1}$ graphs near $\partial M$, and the mean curvature vector of $\Sigma$ on $\Sigma\cap \partial M$ points outward, i.e. \[
       \vec{H}_{\Sigma}\cdot \nu_M \leq 0    \]
       for inward normal field $\nu_M$ of $\partial M$
     \end{enumerate}
     \end{Thm}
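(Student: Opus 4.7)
The strategy is to follow the Almgren-Pitts min-max scheme, adapting each stage to the obstacle setting by systematically replacing first-variation equalities with the one-sided inequalities appropriate to cycles constrained in $M$. The first step is a \emph{pull-tight}: starting from a minimizing sequence $\{\Phi_i\}\in\Pi$ for $\bfL(\Pi)$, I would construct a continuous deformation of $\cZ_n(M)$ generated by vector fields on $\tilde M$ that are tangent to $\partial M$, so that the flow preserves $M$ and hence the space of constrained cycles. This yields a new minimizing sequence whose concentration limits on the almost-maximal slices are integral varifolds $V$ supported in $M$ and satisfying $\delta V(X)\geq 0$ for every $X$ on $\tilde M$ with $X\cdot\nu_M\geq 0$ along $\partial M$. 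The strict gap hypothesis $\bfL(\Pi)>\bfL_{\Phi_0,\partial I^m}$ guarantees that such critical varifolds exist with mass exactly $\bfL(\Pi)$ and that the corresponding concentration parameters lie in the interior of $I^m$.

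Next I would run the Pitts combinatorial argument in the constrained setting: for each $p\in M$ and each sufficiently fine pair of radii, produce a subsequence and a limit varifold that is \emph{almost minimizing} in the relative annulus $\mathrm{An}(p,r,s)\cap M$, with all comparison cycles and all intermediate steps of the discrete interpolation lemma performed inside $\cZ_n(M)$. The resulting $V$ is simultaneously almost minimizing at small scales everywhere in $M$ and also satisfies the variational inequality from the previous step. A constrained interior replacement then shows that, on $\mathrm{int}(M)$, the support of $V$ is a locally stable embedded minimal hypersurface, to which the standard Schoen-Simon theory applies and yields a smooth hypersurface with singular set of codimension $\geq 7$. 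To handle behavior near $\partial M$, I invoke the Schoen-Simon-type regularity theorem announced in the abstract: any integral varifold supported in $M$, satisfying the first-variation inequality, and whose interior part is a locally stable minimal hypersurface, must coincide near $\partial M$ with a finite sum of $C^{1,1}$ graphs and have mean curvature satisfying $\vec H\cdot\nu_M\leq 0$ on $\Sigma\cap\partial M$.

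The main technical obstacle is the constrained Pitts step itself: the classical interpolation and discretization machinery freely deforms cycles in $\tilde M$, whereas here every intermediate cycle must remain in $\cZ_n(M)$. This forces one either to execute the whole interpolation inside $\cZ_n(M)$ using obstacle-respecting isotopies, or to compose the unconstrained constructions with a mass-controlled retraction onto $\cZ_n(M)$, and in either case to verify that the usual ``almost minimizing implies stability'' argument still produces a limit varifold whose tangent cones away from $\partial M$ are stable minimal cones. Once this constrained combinatorial framework is in place, pull-tight together with the paper's boundary regularity theorem delivers the full statement.
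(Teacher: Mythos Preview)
Your overall architecture---pull-tight, Pitts combinatorics, then regularity via replacements---matches the paper. Two points deserve correction.

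First, a small one: in the pull-tight you restrict to vector fields \emph{tangent} to $\partial M$. That would only force the critical varifolds to be stationary with free boundary. To conclude the constrained-stationary inequality $\delta V(X)\ge 0$ for all inward-pointing $X$, the deformation must be built from $\scX_c^+(M)$: if $V$ is not constrained stationary there is an inward $X$ with $\delta V(X)<0$, and the flow of such $X$ still maps $M$ into $M$. The paper's pull-tight (following Pitts and Zhou--Zhu) uses exactly this.

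Second, and more substantively: you treat the boundary regularity as a single black box, writing that the structure theorem for constrained-stable varifolds yields a finite sum of $C^{1,1}$ graphs. It does not. Theorem~\ref{Thm_Strc thm_Main} only produces $C^1$ graphs, and the paper explicitly flags whether one can upgrade to $C^{1,1}$ directly as an open question. The $C^{1,1}$ regularity in Theorem~\ref{Thm_Minmax_Reg alm min} is obtained by a second use of the \emph{almost-minimizing} property: once the structure theorem gives $V\llcorner U_x=\sum_i|\mathrm{graph}_{\partial M}(u_i)|$ with $u_i\in C^1$, one takes a replacement $V^*$ of $V$ in $U_x\setminus\overline{B_{t/2}(p)}$ for a suitable interior point $p$ on the $i$-th sheet; this $V^*$ is a constrained \emph{minimizer} near $\partial M$, hence $C^{1,1}$ by the obstacle regularity Theorems~\ref{Thm_Survey Obst_Reg for non-para} and~\ref{Thm_Survey Obst_Reg for para}, and unique continuation identifies its graph with $u_i$ on the relevant component of $\{u_i>0\}$. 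Iterating over components and sheets upgrades every $u_i$ to $C^{1,1}$. Without this step your outline does not reach the claimed optimal regularity.

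Relatedly, you identify the constrained discretization/interpolation as the main technical obstacle. In the paper this is handled by the standard Marques--Neves machinery applied inside $\cZ_n(M)$ and is essentially routine; the genuine new work is the structure theorem and the replacement-based $C^{1,1}$ upgrade just described.
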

     For the precise statement, see theorem \ref{Thm_Minmax_Exist constr min} and the notations in section \ref{Section_Pre}.
     
     Our proof is based on the idea of min-max construction in \cite{Pitts81} and the regularity result in \cite{SchoenSimon81}. Two major difficulties are the lack of Schoen-Simon-type regularity for stable varifolds near $\partial M$, and the invalidity of unique continuation result for minimal hypersurface with obstacle. To fix this, we propose a general structure theorem for a certain class of varifolds $V$ near $\partial M$, saying that locally $V$ is the sum of finitely many $C^1$-graphs over $\partial M$, see theorem \ref{Thm_Strc thm_Main}. The improved regularity of min-max solution then follows from a standard replacement argument together with a uniform $C^{1,1}$ estimate for graphs satisfying a variational inequality. Based on this, the compactness is also proved for a family of constrained embedded stable minimal hypersurfaces, see corollary \ref{Cor_Str thm_Cptness of c-stable min}.
     
     A natural question is when the hypersurface we construct above has vanishing mean curvature. By strong maximum principle \cite{White09}, this is the case when $M$ is weakly mean convex. Without mean convexity, in section \ref{Section_Loc rigid}, we prove the following rigidity theorem,
     \begin{Thm} \label{Thm_Intro_Loc Rigid}
      Let $\Sigma\subset N^{n+1}$ be a two-sided, non-degenerate smooth closed minimal hypersurface; $\epsilon \in (0,1)$. Then there's $s_0 = s_0(\Sigma, N, \epsilon)>0$ such that if $\delta\in(0, s_0)$, then any constrained embedded minimal hypersurface in the $\delta$-neighborhood $Clos(B_{\delta}(\Sigma))$ of $\Sigma$ with the mass between $\scH^n(\Sigma)$ and $(2-\epsilon)\scH^n(\Sigma)$  is $\Sigma$ itself.
     \end{Thm}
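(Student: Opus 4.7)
\emph{Proof plan.} I would argue by contradiction: suppose there are $\delta_i \searrow 0$ and constrained embedded minimal hypersurfaces $\Sigma_i \neq \Sigma$ in $Clos(B_{\delta_i}(\Sigma))$ with $\scH^n(\Sigma_i) \in [\scH^n(\Sigma),\,(2-\epsilon)\scH^n(\Sigma)]$. By Corollary \ref{Cor_Str thm_Cptness of c-stable min}, a subsequence of $\Sigma_i$ converges as integral varifolds to a limit supported in $\Sigma$, and the upper mass bound $(2-\epsilon)\scH^n(\Sigma)$ rules out multiplicity $\geq 2$, so the limit is $[\Sigma]$. Combining with the structure theorem (Theorem \ref{Thm_Strc thm_Main}) and two-sidedness, for large $i$ each $\Sigma_i$ is a single $C^{1,1}$ graph of a function $u_i\colon \Sigma \to [-\delta_i, \delta_i]$ that satisfies the obstacle problem for the minimal surface operator $\mathcal{M}$; a rescaling of the $C^{1,1}$ regularity for this obstacle problem yields the key estimate $\|u_i\|_{C^{1,1}(\Sigma)} \leq C\delta_i$.

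Let $\eta_i = \|u_i\|_\infty > 0$. If $\eta_i < \delta_i$ along a subsequence, then $\Sigma_i$ never touches $\partial M_i$, so $\mathcal{M}(u_i) = 0$ on all of $\Sigma$; interior elliptic regularity gives $\|u_i\|_{C^{2,\alpha}} \leq C\eta_i$, and $v_i := u_i/\eta_i$ subconverges in $C^2$ to a limit $v$ with $L_\Sigma v = 0$ and $\|v\|_\infty = 1$, contradicting non-degeneracy. Assume henceforth $\eta_i = \delta_i$; set $v_i = u_i/\delta_i$, so $|v_i|\leq 1$, $\|v_i\|_\infty = 1$, $\|v_i\|_{C^{1,1}} \leq C$, and $v_i \to v$ in $C^{1,\beta}$. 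Writing $q := |A_\Sigma|^2 + \mathrm{Ric}_N(\nu,\nu)$ and using $|\mathcal{M}(u)-L_\Sigma u| \leq C|\nabla u|^2|\nabla^2 u|$ together with $\|u_i\|_{C^{1,1}} \leq C\delta_i$ (which makes the nonlinear remainder $O(\delta_i^3)$ on $\{|u_i|<\delta_i\}$), I pass to the limit to obtain $L_\Sigma v = 0$ on $\{|v|<1\}$ and $L_\Sigma v = \pm q$ classically on $\mathrm{int}\{v = \pm 1\}$.

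The rigidity comes from combining two matched inequalities. The obstacle condition $\vec H_{\Sigma_i}\cdot \nu_M \leq 0$ implies $q \geq 0$ pointwise on each $\{u_i = \pm\delta_i\}$; matching the weak and classical forms of $L_\Sigma v$ on $\mathrm{int}\{v = \pm 1\}$ passes this to $q \geq 0$ there in the limit. Testing the variational inequality for $\Sigma_i$ with the admissible $\phi = -u_i$ (since $u_i + t(-u_i) = (1-t)u_i$ is admissible for $t\in[0,1]$) yields $\int_\Sigma L_\Sigma u_i \cdot u_i \geq -O(\delta_i^3)$, i.e.\ $Q(v) \leq 0$ after rescaling, where $Q(w) := \int_\Sigma (|\nabla w|^2 - q w^2)$; the mass bound $\scH^n(\Sigma_i) \geq \scH^n(\Sigma)$ together with the area expansion (cubic remainder $O(\delta_i^3)$) gives $Q(v) \geq 0$. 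Therefore $Q(v) = 0$, and self-adjointness rewrites this as $\int_\Sigma L_\Sigma v \cdot v = 0$; decomposing over the pieces $\{|v|<1\}$, $\mathrm{int}\{v = 1\}$, $\mathrm{int}\{v = -1\}$ and using $q\geq 0$ on the touching sets, each summand is non-negative, and summing to zero forces each to vanish, so $q \equiv 0$ a.e.\ on $\mathrm{int}\{v = \pm 1\}$. Consequently $L_\Sigma v = 0$ a.e.\ on $\Sigma$; elliptic regularity upgrades this to a classical equation, and non-degeneracy $\ker L_\Sigma = \{0\}$ forces $v \equiv 0$, contradicting $\|v\|_\infty = 1$. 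The hardest step is closing the exact cancellation $Q(v) = 0$ in the touching case: both the area-expansion cubic and the variational-inequality remainder must be pushed to $O(\delta_i^3)$, which is only possible with the scaling-consistent estimate $\|u_i\|_{C^{1,1}} \leq C\delta_i$ rather than the naive absolute bound.
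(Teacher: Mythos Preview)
Your argument is essentially the paper's: contradiction, compactness plus the upper mass bound to force multiplicity one and write $\Sigma_i=\mathrm{graph}_\Sigma(u_i)$, rescale by $\|u_i\|_{C^0}$, and squeeze the limit $v$ between the sign condition on the mean curvature (giving $\int v\,L_\Sigma v\le 0$) and the lower mass bound $\scH^n(\Sigma_i)\ge\scH^n(\Sigma)$ (giving $\int v\,L_\Sigma v\ge 0$) to produce a nontrivial Jacobi field.

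Two small corrections. First, Corollary~\ref{Cor_Str thm_Cptness of c-stable min} assumes interior stability, which is not part of the hypothesis here; the paper instead invokes Allard compactness and regularity directly, using that a constrained embedded minimal hypersurface in $\scN_{\delta_i}$ has $L^\infty$ mean curvature bounded by that of $\partial\scN_{\delta_i}$, hence $O(\delta_i)$. Second, the scaling estimate $\|u_i\|_{C^{1,1}}\le C\delta_i$ is neither needed nor obviously obtained by ``rescaling'' (the domain $\Sigma$ is fixed). The paper uses only $\|u_i\|_{C^{1,\alpha}}+\|u_i\|_{W^{2,2}}\le C\|u_i\|_{C^0}$, which follows from $|\scM u_i|\le C\delta_i$ and standard $L^p$ theory; this already makes the area-expansion remainder cubic, since that remainder is built from $u_i,\nabla u_i$ alone (no Hessian appears before integrating by parts). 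The paper also avoids your case split on the touching sets: it passes the pointwise inequality $u_i\cdot\scM u_i\le 0$ to the limit to get $v\cdot L_\Sigma v\le 0$ on all of $\Sigma$, which combined with $\int v\,L_\Sigma v\ge 0$ forces $L_\Sigma v=0$ a.e.\ in one stroke.
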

     In view of the local min-max property by \cite{White94}, we can see from theorem \ref{Thm_Intro_Exist Minmax Obst} and theorem \ref{Thm_Intro_Loc Rigid} that a non-degenerate two-sided smooth minimal hypersurface could be reconstruct from min-max in a sufficiently small neighborhood. In an upcoming work, we generalize the approach above to construct minimal hypersurfaces lying nearby certain singular minimal hypersurface under any sufficiently small perturbation of metric.
     
     \subsection*{Organization of the paper.} In section \ref{Section_Pre}, we introduce some notations and preliminaries for constrained stationary varifold, an object we shall deal with throughout this note. We also include a brief survey on obstacle problem in the minimizing setting in \ref{Subsection_Obst} and two $C^{1,1}$-regularity theorems we shall use later, the proof of which are in the appendix; section \ref{Section_Str thm} is devoted to the proof of main structure theorem for constrained stationary varifolds that are stable in the interior. In section \ref{Section_Minmax}, we discuss the min-max constructions and prove a precise version of theorem \ref{Thm_Intro_Exist Minmax Obst}. Section \ref{Section_Loc rigid} discusses theorem \ref{Thm_Intro_Loc Rigid}.
     
    \subsection*{Acknowledgement}
     I am grateful to my advisor Fernando Cod\'a Marques for his constant support and guidance. I'm thankful to Chao Li and Xin Zhou for suggesting this problem and inspiring discussions and to Fanghua Lin for helpful explainations to his thesis \cite{Lin85} on obstacle problem. I would also thank Yangyang Li, Zhenhua Liu and Lu Wang for their interest in this work.

    \section{Preliminaries} \label{Section_Pre}
     \subsection{Notations} \label{Subsection_Notations}
     Here are some notations we adopt throughout the article. In an $L$-dimensional Euclidean space $\RR^L$, 
     \begin{align*}
      \BB^L_r(p)\ &\ \text{the open ball of radius } r \text{ centered at }p \\
      \SSp^{L-1}_r(p)\ &\ \text{the sphere of radius } r \text{ centered at }p \\
      \AAa_{s, r}^L(p)\ &\ \text{the open annuli }\BB_r(p)\setminus \overline{\BB_s(p)} \text{ centered at }p \\
      \eta_{p, r}\ &\ \text{the map between }\RR^L, \text{ maps }x \text{ to }(x-p)/r \\
      \bfG(L, k)\ &\ \text{the Grassmannian of } k\text{-dimensional unoriented linear subspace in }\RR^L \\
      \scH^k\ &\ k\text{-dimensional Hausdorff measure} \\
      \omega_k\ &\ \text{the volume of }k \text{-dimensional unit ball} \\
      \theta^k(x, \mu)\ &\ \text{the k-th density (if exists) of a Radon measure }\mu \text{ at }x, \text{ i.e. }\lim_{r\to 0}\frac{1}{\omega_k r^k}\mu(\BB_r^L(x))
     \end{align*}

     Let $(M, g)$ be a compact oriented $n+1$-dimensional smooth manifold with boundary $\partial M \neq \emptyset$, $n\geq 2$; For sake of simplicity, we can always extend $(M, g)$ to a closed $n+1$-manifold $(\tilde{M}, \tilde{g})$ isometrically embedded in $\RR^L$. We always work under the intrinsic topology of $\tilde{M}$. Denote the intrinsic Riemannian metric by $\langle\cdot , \cdot\rangle$ and the Levi-Civita connection by $\nabla$. Also write 
     \begin{align*}
      Int(A)\ &\ \text{the interior of a subset }A\subset \tilde{M} \\
      Clos(A)\ &\ \text{the closure of a subset }A\subset \tilde{M} \\
      dist_M\ &\ \text{the intrinsic distant function on }M \\
      dist_{\partial M}\ &\ \text{the intrinsic distant function on }\partial M \\
      B_r(p)\ &\ \text{the open geodesic ball in } \tilde{M} \text{ of radius } r \text{ centered at }p \\
      B_r(E)\ &\ \text{the r neighborhood of }E\subset \tilde{M},  \bigcup_{p\in E} B_r(p) \\
      S_r(p)\ &\ \text{the geodesic shpere in } \tilde{M} \text{ of radius } r \text{ centered at }p \\
      A_{s, r}(p)\ &\ \text{the open geodesic annuli in } \tilde{M}, B_r(p)\setminus \overline{B_s(p)}  \\
      \cB_r(p)\ &\ \text{the open geodesic ball in } \partial M \text{ of radius} r \text{ centered at }p \\
      \nu_{\partial M}\ &\ \text{the inward unit normal field of }\partial M \text{ with respect to }M,\\
        &\ \text{ extended to a neighborhood of }\partial M \text{ by }\nabla_{\nu_{\partial M}}\nu_{\partial M} = 0 \\
      \vec{H}_{\partial M}\ &:=\ -div_{\partial M} (\nu_{\partial M}) \cdot \nu_{\partial M} = - H_{\partial M}\cdot\nu_{\partial M},\ \text{ the mean curvature vector of }\partial M \\
      A_{\partial M}\ &:=\ \nabla \nu_{\partial M},\ \text{ the 2nd fundamental form of }\partial M\\
      T_p^+M\ &:=\ \{v\in T_pM : v\cdot \nu_{\partial M} \geq 0\},\ \text{ the inward pointed vectors in }T_pM \\
      \exp ^M\ &\ \text{the exponential map of }\tilde{M} \\
      inj(\partial M)\ &\ \text{the intrinsic injectivity radius of }\partial M \\
      \scX_c(U)\ &\ \text{the space of compactly supported vector fields in a relative open subset }U\subset M \\
      \scX_c^+(U)\ &:=\ \{X\in \scX_c(U): X_p\in T_p^+M,\ \forall p\in \partial M \} \\
      \scX_c^{tan}(U)\ &:=\ \{X\in \scX_c(U): X_p\in T_p \partial M,\ \forall p\in \partial M \} 
     \end{align*} 

     Let $r_M$ be the relative injective radius of $\partial M$ in $\tilde{M}$,
     \begin{align} 
      r_M:= \inf{\big\{}inj(\partial M), \sup\{r: (x, s)\mapsto \exp^M_x(s\cdot \nu_{\partial M}(x)) \text{ is a diffeomorphism on }\partial M\times (-r, r) \} {\big\}}  
      \label{Pre_Notation_rel inj radi}
     \end{align}
     We shall work under Fermi coordinates in $B_{r_M}(\partial M)$, \[
      \Phi: \partial M\times (-r_M, r_M) \to B_{r_M}(\partial M),\ \ (x, s)\mapsto \exp^M_x(s\cdot \nu_{\partial M}(x))     \] 
     and shall simply write a point $p=\Phi(x, s)\in B_{r_M}(\partial M)$ as $(x, s)$. Under this notation, for $x\in \partial M$ and $r\in (0, r_M)$ write
     \begin{align*}
      C_r(x)\ &\ \text{the open cylinder in }\tilde{M}, \{(y, s): dist_{\partial M}(y, x) <r, |s|<r\}\\
      C_r^+(x)\ &\ \text{the open half cylinder in }M,  \{(y, s): dist_{\partial M}(y, x) <r, 0<s<r\}\\
      \partial^+ C_r(x)\ &\ \text{the top boundary of }C_r(x),  \{(y, r): dist_{\partial M}(y, x) <r\}\\
      \partial^s C_r(x)\ &\ \text{the side boundary of }C_r(x),  \{(y, s): dist_{\partial M}(y, x) =r, |s|<r\}
     \end{align*}
     For $u\in C^1(\cU, (-r_M, r_M))$ for some domain $\cU\subset \partial M$, define \[
      graph_{\partial M}(u):= \{(x, u(x)): x\in \cU\}    \]
     and orient it by choosing the normal field to $graph_{\partial M}(u)$ having positive inner product with $\nu_M$.

     We also recall some basic notions of currents and varifolds and refer readers to the standard references \cite{Federer69}, \cite{Simon83_GMT} for details. The following are the spaces we shall work with,
     \begin{align*}
      \bfI_k(M)\ &\ \text{the space of }k\text{-dimensional integral currents in }\RR^L \text{ with support in }M \\
      \cZ_k(M)\ &:=\ \{T\in \bfI_k(M): \partial T = 0\} \\
      \cV_k(M)\ &\ \text{the closure of }\cZ_k(M)\text{ under varifold weak topology} \\
      \cI\cV_k(M)\ &\ \text{the space of integral k-varifolds in }\tilde{M} \text{ supported in }M
     \end{align*}

     For an immersed k-dimensional submanifold $\Sigma$ in $M$ with finite volume, let $|\Sigma|$ be the associated integral k-varifold in $\cI\cV_k(M)$; If further $\Sigma$ is oriented, let $[\Sigma]$ be the associated integral k-current. For $T\in \bfI_k(M)$, let $|T|$ and $\|T\|$ be the associated integral varifold and Radon measure on $M$.
     
     For a relatively open subset $U\subset M$, let $\cF_U$, $\bfM_U$ be the flat metric and mass norm on $\bfI_k(U)$, and $\bfF$ be the metric compatible with weak convergence of varifolds in $M$ on $\cI\cV_k(M)$. If $U = M$, we shall omit the subscript. The $\bfF$-metric between integral currents is defined by $\bfF(T, S)= \cF(T, S)+ \bfF(|T|, |S|)$, $S, T\in \bfI(M)$. 
     
     For a proper $C^1$ map $f$ between $\RR^L$, $f_{\sharp}$ be the push forward of varifold or current;
     
     For $E\subset M$ and $V\in \cV_k(M)$, $V\llcorner E$ be the restriction of $V$ onto $E$.
     
     For $X\in \scX^+(M)$ and $V\in \cV_k(M)$, let \[
      \delta V(X) := \frac{d}{dt}\bigg|_{t=0} \bfM(e^{tX}\ _{\sharp}V) = \int div^SX(x)\ dV(x, S)    \]
     be the first variation of $V$, where $e^{tX}$ be the one-parameter family of diffeomorphism induced by $X$; $div^S X:=\sum\langle \nabla_{e_i}X, e_i\rangle$, $\{e_1, ..., e_k\}$ be an orthonormal basis of $S$.

    \subsection{Constrained stationary varifolds} \label{Subsection_Constr stny vfd}
     Throughout this subsection, let $M$ be as in subsection \ref{Subsection_Notations}, $1\leq k\leq n$, $U\subset M$ be a bounded relatively open subset. We note that the following definitions also generalize directly to properly embedded $M\subset \RR^L$, not necessarily compact.
     \begin{Def}
      Call $V\in \cV_k(M)$ stationary in $U$ if \[
       \delta V(X) = 0\ \ \ \forall X\in \scX_c(U)     \]
      Call $V\in \cV_k(M)$ \textbf{constrained stationary} along $\partial M$ in $U$ if \[
       \delta V(X) \geq 0\ \ \ \forall X\in \scX_c^+(U)     \]
      Call $V$ \textbf{stationary with free boundary} along $\partial M$ in $U$ if \[
       \delta V(X) = 0\ \ \ \forall X\in \scX_c^{tan}(U)   \]
     \end{Def}
     We shall omit the description \textsl{along $\partial M$} in both case if there's no ambiguity.
     
     The same condition of a constrained stationary varifold is studied in \cite{White09}, where it is called \textsl{varifold minimizes area to first order}. The notion of stationary varifold with free boundary was introduced in \cite{GuangLiZhou18Crelle} for the study of free boundary minimal surfaces. Since $\scX_c^{tan}(U)\subset \scX_c^+(U)$ and $X\in \scX_c^{tan}(U)$ iff $-X\in \scX_c^{tan}(U)$ by definition, every constrained stationary varifold in $U$ is stationary with free boundary.
     
     We list some basic results on constrained stationary varifolds, which will be used in the next few sections. 
     \begin{Lem}[Compactness] \label{Lem_Constr stny_Cptness}
      Let $(M_j, g_j)\subset \RR^L$ be a family of manifold with boundary converging to $(M, g)$ in $C^2_{loc}$, $\scU \subset \RR^L$ be an open subset such that $U = \scU\cap M$; Let $V_j\in \cV_k(M_j)$ be constrained stationary in $\scU\cap M_j$ and $V_j\wsto V \in \cV_k(\RR^L)$. Then, $V\in \cV_k(M)$ is constrained stationary in $U$. 
     \end{Lem}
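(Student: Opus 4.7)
The plan is, for every test field $X \in \scX_c^+(U)$, to construct an approximating sequence $X_j \in \scX_c^+(\scU \cap M_j)$ with $X_j \to X$ in $C^1_{loc}$, and then pass to the limit in the variational inequality $\delta V_j(X_j) \geq 0$ using $V_j \wsto V$. The containment $\mathrm{spt}(V) \subset M$, hence $V \in \cV_k(M)$, is immediate: the $C^2_{loc}$-convergence $M_j \to M$ implies Hausdorff convergence of compact pieces, so any $p \in \RR^L \setminus M$ admits $r > 0$ with $B_r(p) \cap M_j = \emptyset$ for $j$ large, forcing $V(B_r(p)) = \lim_j V_j(B_r(p)) = 0$.

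For the construction of $X_j$, I would use Fermi coordinates about $\partial M$. For $j$ large, $\partial M_j$ is locally expressible as a graph $\{(x, u_j(x))\}$ over $\partial M$ with $u_j \to 0$ in $C^2_{loc}$. Combining this identification with a cutoff produces diffeomorphisms $\phi_j : \tilde{M} \to \tilde{M}_j$, defined on a neighborhood of $\mathrm{spt}(X)$ for $j$ large, satisfying $\phi_j(M) = M_j$, $\phi_j(\partial M) = \partial M_j$, and $\phi_j \to \mathrm{id}$ in $C^2_{loc}$. Setting $X_j := (\phi_j)_* X$, the identity $d\phi_j(T_p^+ M) = T_{\phi_j(p)}^+ M_j$, which comes from the fact that $\phi_j$ preserves both $M$ and $\partial M$, gives $X_j \in \scX_c^+(\scU \cap M_j)$, and the $C^2$-convergence of $\phi_j$ yields $X_j \to X$ in $C^1_{loc}$.

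Finally, $\mathrm{div}^S X_j \to \mathrm{div}^S X$ uniformly on the Grassmannian bundle over the compact set $K := \mathrm{spt}(X) \cup \bigcup_j \mathrm{spt}(X_j)$, while $\sup_j \|V_j\|(K) < \infty$ by the weak-$\ast$ convergence $V_j \wsto V$. Consequently
\[
  \delta V(X) = \int \mathrm{div}^S X \, dV = \lim_{j \to \infty} \int \mathrm{div}^S X_j \, dV_j = \lim_{j \to \infty} \delta V_j(X_j) \geq 0,
\]
which is exactly the constrained stationarity of $V$ in $U$. The main technical point is the construction of the boundary-preserving diffeomorphisms $\phi_j$ with $C^2_{loc}$ control; once these are in hand the remaining compactness argument is routine, and the same scheme handles the free-boundary variant by replacing $\scX_c^+$ with $\scX_c^{tan}$ and the inequality by an equality.
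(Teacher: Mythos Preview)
Your proposal is correct and is precisely a fleshed-out version of the paper's proof, which reads in its entirety: ``By the continuity of $\delta V(X)$ in $V$ w.r.t.\ varifold topology and in $X$ w.r.t.\ $C^1$ topology.'' You have made explicit the approximation $X_j\to X$ in $C^1$ with $X_j\in\scX_c^+(\scU\cap M_j)$ that this joint continuity tacitly requires; the construction via boundary-preserving diffeomorphisms $\phi_j\to\mathrm{id}$ is the natural way to do this, and your verification that $d\phi_j(T_p^+M)=T_{\phi_j(p)}^+M_j$ is the key point ensuring the constraint is preserved.
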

     \begin{proof}
      By the continuity of $\delta V (X)$ in $V$ w.r.t. varifold topology and in $X$ w.r.t. to $C^1$ topology. 
     \end{proof}
     
     \begin{Lem}[Boundary Monotonicity] \label{Lem_Constr stny_Bdy Mon}
      Let $\cK\subset U\cap \partial M$ be a compact subset. Then there exists $R_0=R_0(M, U, \cK)>0$, $\Lambda = \Lambda(M)>0$ such that if $V\in\cV_k(M)$ is a stationary varifold with free boundary along $\partial M$ in $U$, $p\in \cK$, then the function \[
       r\mapsto \frac{e^{\Lambda r}}{r^k}\|V\|(\BB^L_r(p))   \] 
      is non-decreasing in $r\in (0, R_0)$. Moreover, when $(U, M, g)= (\RR^{n+1}_+, \RR^{n+1}_+, |dx|^2)$, if $\|V\|(B_r(p))/r^k$ is constant in $r\in (0,  =\infty)$, then $V$ is a cone, i.e. \[
       (\eta_{p, r})_{\sharp} V = V \ \ \ \forall r\in (0, +\infty)    \]
     \end{Lem}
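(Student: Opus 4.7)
The plan is to adapt the classical monotonicity argument for stationary varifolds to the free-boundary setting by testing the first variation against a vector field that mimics the Euclidean radial field from $p$ but remains tangent to $\partial M$, so that it lies in $\scX_c^{tan}(U)$ and may legitimately be plugged into the free-boundary stationarity condition.

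First, I would work in the Fermi coordinates $(y, s) \in \partial M \times (-r_M, r_M)$ introduced in subsection \ref{Subsection_Notations}, based at $p = (y_p, 0) \in \cK$. Set $\rho(y, s) := \bigl(dist_{\partial M}(y, y_p)^2 + s^2\bigr)^{1/2}$ and let $\xi$ be the vector field whose tangential part at $(y, s)$ is the parallel transport along the Fermi normal direction of the geodesic radial field on $\partial M$ based at $y_p$, and whose normal part is $s\,\partial_s$. Since $\xi|_{\{s=0\}}$ is tangent to $\partial M$, for any non-negative non-increasing cutoff $\phi \in C_c^\infty([0,1))$ the field
$$X_{r, \phi}(y, s) := \phi(\rho(y, s)/r)\,\xi(y, s)$$
belongs to $\scX_c^{tan}(U)$ whenever $r < R_0 = R_0(M, U, \cK)$, and the free-boundary stationarity of $V$ therefore gives $\delta V(X_{r, \phi}) = 0$.

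Next, I would compute $div^S X_{r, \phi}$ in Fermi coordinates and compare it to the Euclidean radial divergence of $x - p$ in $\RR^L$. Using smoothness of the Fermi chart and of the isometric embedding $\tilde M \hookrightarrow \RR^L$, one obtains an expansion of the form
$$div^S X_{r, \phi} = k\,\phi(\rho/r) + (\rho/r)\,\phi'(\rho/r)\bigl|\xi^S/\rho\bigr|^2 + O(\rho)\bigl(\phi(\rho/r) + (\rho/r)|\phi'(\rho/r)|\bigr),$$
where the $O(\rho)$ constant depends only on bounds for the second fundamental forms of $\tilde M \hookrightarrow \RR^L$ and of $\partial M \hookrightarrow \tilde M$ restricted to $\cK$. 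Integrating against $V$ and letting $\phi$ approach $\chi_{[0,1]}$, as in the classical Allard--Simon argument \cite{Simon83_GMT}, yields the differential inequality
$$\frac{d}{dr}\Bigl(\frac{\|V\|(\BB^L_r(p))}{r^k}\Bigr) \geq -\Lambda \cdot \frac{\|V\|(\BB^L_r(p))}{r^k}$$
for some $\Lambda = \Lambda(M) > 0$; integrating over $(0, R_0)$ gives the claimed monotonicity of $e^{\Lambda r}\,r^{-k}\|V\|(\BB^L_r(p))$.

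Finally, in the flat half-space case $(\RR^{n+1}_+, |dx|^2)$ the vector $\xi$ coincides with the Euclidean radial field $x - p$, which is automatically tangent to $\partial \RR^{n+1}_+$ along the boundary, so the $O(\rho)$ error vanishes identically and $\Lambda = 0$. If $\|V\|(B_r(p))/r^k$ is constant on $(0, \infty)$, equality in the monotonicity identity for every $r$ forces $|\xi^S/\rho|^2 = 1$ $V$-a.e., hence $\xi^{\perp} = 0$ $V$-a.e.; since $\xi = x - p$, this is precisely the cone condition $(\eta_{p, r})_\sharp V = V$. The main technical point will be establishing the divergence expansion above with the $O(\rho)$ error depending only on the ambient geometry, uniformly over $p \in \cK$; once this is done, the remainder is the standard scalar ODE integration, and the rigidity in the flat case follows from the equality case in the same formula.
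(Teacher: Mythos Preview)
Your outline is sound and is essentially the argument behind the cited result; the paper itself does not prove this lemma but simply invokes \cite[theorem~3.4]{GuangLiZhou18Crelle} with $N=\partial M$, where exactly this type of tangential radial vector field in Fermi coordinates is used to derive an explicit monotonicity formula. So you are reconstructing the proof that the paper outsources.

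Two small points to tighten. First, your cutoff is in the Fermi distance $\rho$, so the monotonicity you obtain in the first instance is for $e^{\Lambda r}r^{-k}\|V\|(\{\rho<r\})$, not for the extrinsic balls $\BB^L_r(p)$ appearing in the statement. You need one more line: since $|x-p|_{\RR^L}=\rho(x)\bigl(1+O(\rho)\bigr)$ uniformly for $p\in\cK$, the two set functions agree up to an error absorbable into $\Lambda$ and a possibly smaller $R_0$. Second, for the rigidity in $\RR^{n+1}_+$ the standard argument gives $(x-p)^{\perp}=0$ for $\|V\|$-a.e.\ $x$; to upgrade this to $(\eta_{p,r})_\sharp V=V$ as varifolds (not just as measures) you should also record that the homothety invariance of the Grassmannian part follows, e.g.\ by testing against $\phi(\rho/r)\,\xi$ and differentiating in $r$, or by the standard tangent-cone argument from \cite[\S42]{Simon83_GMT}. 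Neither point is a real obstacle, but both should be stated explicitly.
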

     This is a corollary of \cite[theorem 3.4]{GuangLiZhou18Crelle}, where $N = \partial M$ and an explicit monotonicity formula is proved.\\
     
     \begin{Cor} \label{Cor_Constr stny_Bdy density lbd}
      Suppose $V\in \cI\cV_k(M)$ is a stationary integral varifold with free boundary along $\partial M$ in $U$; $x\in spt(V)\cap U\cap \partial M$, then \[
       \theta^k(x, \|V\|) \geq 2^{-k}   \]
     \end{Cor}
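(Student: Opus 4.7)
The plan is to reduce the density lower bound at a boundary point to the density at the vertex of a tangent cone in a half-space, and then obtain it from Allard's classical interior density lower bound via a reflection trick.

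By Lemma \ref{Lem_Constr stny_Bdy Mon} the limit $\theta^k(x, \|V\|) = \lim_{r \to 0^+} (\omega_k r^k)^{-1}\|V\|(\BB^L_r(x))$ exists. Rescaling $V$ at $x$ in Fermi coordinates by factors $r_j \to 0$ and extracting a subsequence (using the free-boundary analog of Lemma \ref{Lem_Constr stny_Cptness}, which follows by the same continuity of $\delta V(X)$ in $V$ and $X$), one obtains a tangent varifold $C \in \cI\cV_k(\RR^{n+1}_+)$ that is stationary with free boundary in $\RR^{n+1}_+ \cong T_x^+ M$. The equality case of Lemma \ref{Lem_Constr stny_Bdy Mon} forces $C$ to be a cone with vertex $0$; by construction $0 \in \mathrm{spt}(C)$ and $\theta^k(0, \|C\|) = \theta^k(x, \|V\|)$.

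Next I would double $C$ by reflection. Let $\sigma:\RR^{n+1}\to\RR^{n+1}$ be the Euclidean reflection across $\partial \RR^{n+1}_+$ and set $\tilde C := C + \sigma_\sharp C$. The crucial claim is that $\tilde C$ is stationary on all of $\RR^{n+1}$. Indeed, for any $X \in \scX_c(\RR^{n+1})$ one has
\[
\delta \tilde C(X) \;=\; \delta C(X) + \delta(\sigma_\sharp C)(X) \;=\; \delta C(X + \sigma^* X),
\]
where $\sigma^* X(z) := D\sigma^{-1}(X(\sigma z))$. At any $z \in \partial \RR^{n+1}_+$ one has $(X + \sigma^* X)(z) = (I + D\sigma^{-1})X(z) = \mathrm{diag}(2,\ldots,2,0)X(z)$, whose normal component vanishes; hence $X + \sigma^* X$ is tangent to $\partial\RR^{n+1}_+$ on the boundary, and free-boundary stationarity of $C$ forces $\delta \tilde C(X) = 0$.

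Finally, Allard's density lower bound applied to the integral stationary varifold $\tilde C$ gives $\theta^k(0, \|\tilde C\|) \geq 1$, while reflection symmetry yields $\theta^k(0, \|\tilde C\|) = 2\theta^k(0, \|C\|)$. Combining, $\theta^k(x, \|V\|) = \theta^k(0, \|C\|) \geq 1/2 \geq 2^{-k}$. The main subtlety is the stationarity of $\tilde C$; this rests on the observation that symmetrizing an arbitrary test field produces one whose boundary trace is tangential, which is exactly what the free-boundary condition of $C$ is designed to test.
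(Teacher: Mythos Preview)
Your reflection computation for the stationarity of $\tilde C$ is correct, and if the argument went through it would even give the sharper bound $\theta^k(x,\|V\|)\geq 1/2$. But there is a circularity at ``by construction $0\in spt(C)$'': since $C$ is a cone, $0\in spt(C)$ is equivalent to $C\neq 0$, i.e.\ to $\theta^k(0,\|C\|)=\theta^k(x,\|V\|)>0$, which is a weak form of the very conclusion you are proving. The hypothesis $x\in spt(V)$ only says $\|V\|(\BB^L_r(x))>0$ for all $r$; it does not by itself prevent $r^{-k}\|V\|(\BB^L_r(x))\to 0$. Ruling this out is precisely where the work lies: one must use integrality to find $p_j=(x_j,t_j)\to x$ with $\theta^k(p_j,\|V\|)\geq 1$, and the delicate case $0<t_j\ll |p_j-x|$ is where interior monotonicity at $p_j$ (valid only for radii $\leq t_j$) gives no control on your rescaled picture near the origin. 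The paper's proof never blows up or reflects; it handles this case directly by nesting $\BB^L_{t_j}(p_j)\subset\BB^L_{2t_j}(x_j)$ and switching from interior monotonicity at $p_j$ to boundary monotonicity (Lemma~\ref{Lem_Constr stny_Bdy Mon}) at the footpoint $x_j\in\partial M$; the factor $2^{-k}$ comes from exactly this inclusion of balls.

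A secondary technical gap: you assert $C\in\cI\cV_k(\RR^{n+1}_+)$, but Allard's integral compactness requires locally bounded first variation of the blowup sequence, and Lemma~\ref{Lem_Constr stny_Loc bd 1st var} is proved only for \emph{constrained} stationary varifolds, not merely free-boundary ones. Both issues could be repaired simultaneously by reflecting $V$ itself across $\partial M$ in Fermi coordinates \emph{before} blowing up: the doubled varifold then has bounded generalized mean curvature near $x$ (controlled by $A_{\partial M}$), so the standard interior density bound applies directly to give $\theta^k(x,\|V\|)\geq 1/2$.
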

     \begin{proof}
      Since $V$ is integer multiplicity $k$-rectifiable, there exist (under Fermi coordinates) $p_j=(x_j, t_j)\to x$ such that $\theta^k(p_j, \|V\|)\geq 1$. Let $r_j:=|x-x_j|$. Suppose WLOG that either $t_j \equiv 0$ or $t_j>0$ for all $j\geq 1$.
      For every $0<r<<1$, take $j>>1$ such that $r-r_j>2t_j$. In the first case, by lemma \ref{Lem_Constr stny_Bdy Mon} \[
       \frac{1}{\omega_k r^k}\|V\|(\BB^L_r(x))\geq \limsup_{j\to \infty} \frac{1}{\omega_k (r-r_j)^k}\|V\|(\BB^L_{r-r_j}(x_j)) \geq 1   \] 
      While in the second case,
      \begin{align*}
       \frac{1}{\omega_k r^k}\|V\|(\BB^L_r(x))
       \geq &\ \limsup_{j\to \infty} \frac{1}{\omega_k (r-r_j)^k}\|V\|(\BB^L_{r-r_j}(x_j)) \\
       \geq &\ \limsup_{j\to \infty} \frac{e^{\Lambda 2t_j}}{\omega_k (2t_j)^k}\|V\|(\BB^L_{2t_j}(x_j)) \\
       \geq &\ \limsup_{j\to \infty} \frac{e^{\Lambda 2t_j}}{2^k\cdot \omega_k t_j^k}\|V\|(\BB^L_{t_j}(p_j)) 
       \geq 2^{-k}     
      \end{align*}
      where the second inequality follows from lemma \ref{Lem_Constr stny_Bdy Mon}, and the last inequality follows from interior monotonicity formula for stationary varifolds. Let $r\to 0$, the corollary is proved.  
     \end{proof}
     
     \begin{Lem}[Strong maximum principle] \label{Lem_Constr stny_Maxm Principle}
      Suppose $\cW:= U\cap \partial M$ is connected and mean convex, i.e. $\vec{H}_{\partial M}\cdot \nu_{\partial M}|_x \geq 0$; $V\in\cI\cV_n(M)$ be a constrained stationary integral varifold along $\partial M$ in $U$. Suppose $spt(V)\cap \cW \neq \emptyset$, then
      \begin{enumerate}
       \item[(1)] $\cW\subset spt(V)$ and $\vec{H}_{\partial M} = 0$ on $\cW$. 
       \item[(2)] Moreover, $V\llcorner U = V_b + V_I$ where $V_b= m|\cW|$ for some $m\in \NN$ and $spt(V_I)\cap W = \emptyset$.
      \end{enumerate}             
     \end{Lem}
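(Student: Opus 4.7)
The plan is to combine three ingredients: a first-variation analysis that exposes the ``obstacle reaction'' measure, a strong maximum principle of B.~White type for minimal varieties touching a smooth mean-convex barrier, and an integer-multiplicity constancy argument. First I would unpack constrained stationarity. Since $\scX_c^{tan}(U) \subset \scX_c^+(U)$ and is closed under $X \mapsto -X$, the inequality $\delta V(X) \geq 0$ forces $\delta V(X) = 0$ for every tangential $X$. Hence $V$ is stationary in $\mathrm{Int}(M) \cap U$ (so $\vec{H}_V = 0$ $\|V\|$-a.e.\ there) and the distributional first variation is supported on $\cW$. Tangential vanishing combined with positivity on inward test fields yields, by Riesz representation, a positive Radon measure $\sigma$ on $\cW$ with
\[
\delta V(X) \;=\; \int_{\cW} (X \cdot \nu_{\partial M}) \, d\sigma \qquad \forall X \in \scX_c(U).
\]

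Next, I would invoke B.~White's strong maximum principle (\emph{The maximum principle for minimal varieties of arbitrary codimension}) to force $\cW \subset \mathrm{spt}(V)$. Fix a touching point $p_0 \in \mathrm{spt}(V) \cap \cW$. In Fermi coordinates, the level-set foliation $\{\cW_s\}_{s \geq 0}$ near $\cW$ furnishes smooth comparison barriers whose mean-curvature vectors point into $\{s \geq s_0\}$ by mean convexity of $\cW$ and continuity. Since $\mathrm{spt}(V) \subset \{s \geq 0\}$ touches $\cW_0$ at $p_0$, White's barrier/sweep-out argument, adapted to our constrained-stationary setting (the reaction $\sigma$ points in $+\nu_{\partial M}$, the same direction as $\vec{H}_{\partial M}$, preserving the sign structure on which the argument relies), forces $\cW_0 \subset \mathrm{spt}(V)$ in a neighborhood of $p_0$. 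Openness of this inclusion together with connectedness of $\cW$ then yield $\cW \subset \mathrm{spt}(V)$.

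Finally, I would extract the integer multiplicity and the vanishing of $\vec{H}_{\partial M}$. Rectifiability of $V$ together with $\cW \subset \mathrm{spt}(V)$ gives, at $\scH^n$-a.e.\ $x \in \cW$, tangent plane $T_x \cW$ with positive integer density $\theta(x)$. Boundary monotonicity (Lemma~\ref{Lem_Constr stny_Bdy Mon}) yields upper semicontinuity of $\theta$, and the SMP applied to tangent cones provides the matching lower bound locally; integer-valuedness makes $\theta$ locally constant on $\cW$, hence $\theta \equiv m$ globally for some $m \in \NN$ by connectedness. Setting $V_b := m|\cW|$ and $V_I := V\llcorner U - V_b$, any hypothetical touching $q \in \mathrm{spt}(V_I) \cap \cW$ would, via SMP applied to $V$ at $q$, raise $\theta(q)$ strictly above $m$, a contradiction; thus $\mathrm{spt}(V_I) \cap \cW = \emptyset$. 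Testing $\delta V$ against $X = \phi \nu_{\partial M}$ for $\phi \geq 0$ compactly supported in a thin tubular neighborhood of a compact piece of $\cW$ (so $\delta V_I(X) = 0$) gives
\[
0 \;\leq\; \delta V(X) \;=\; -m \int_{\cW} \phi \, (\vec{H}_{\partial M} \cdot \nu_{\partial M}) \, d\scH^n,
\]
which combined with $\vec{H}_{\partial M} \cdot \nu_{\partial M} \geq 0$ (mean convexity) forces $\vec{H}_{\partial M} \equiv 0$ on $\cW$. The main obstacle is verifying that White's SMP applies in the constrained-stationary setting, but this is natural because the reaction measure's direction matches the barrier's mean curvature, preserving the sign configuration that makes White's barrier argument go through.
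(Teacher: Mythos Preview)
Your overall strategy is in the right spirit and, like the paper, relies on White's strong maximum principle for part~(1). Two points deserve comment.

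First, your worry about adapting White's argument to the constrained-stationary setting is unnecessary. The reference the paper cites, \cite[Theorem~4]{White09}, is formulated precisely for varifolds that ``minimize area to first order,'' which is exactly the constrained-stationary condition here. So (1)---including $\vec H_{\partial M}=0$ on $\cW$---follows by direct citation, with no modification of the barrier argument required.

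Second, and more substantively, your constancy-of-density step has a genuine gap. Upper semicontinuity of $\theta$ along $\cW$ via boundary monotonicity is correct, but the assertion that ``the SMP applied to tangent cones provides the matching lower bound locally'' does not yield lower semicontinuity: the structure of the tangent cone at $x_0$ tells you $\theta(x_0)$, not $\theta$ at nearby boundary points. Integer-valuedness together with one-sided semicontinuity is not enough to force local constancy. Relatedly, in your contradiction for $\mathrm{spt}(V_I)\cap\cW=\emptyset$ you should be applying (1) to $V_I$ (which is itself constrained stationary and integral, using mean convexity), not to $V$; and the reason this forces $\theta_V>m$ at a touching point is again a density-$\geq 1$ statement that you have not established.

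The paper closes this gap differently and more economically. It notes that White's own proof of (2) for stationary $V$ uses only one additional ingredient: $\theta^n(x,\|V\|)\geq 1$ for every $x\in\mathrm{spt}(V)\cap\cW$. This is supplied in the constrained setting by combining (1) with the proof of Corollary~\ref{Cor_Constr stny_Bdy density lbd}: once $\cW\subset\mathrm{spt}(V)$, the density lower bound $\geq 2^{-n}$ there forces $\scH^n$-a.e.\ $y\in\cW$ to carry integer density $\geq 1$, and then taking such $y_j\to x$ along $\partial M$ (the ``first case'' of that proof) upgrades this to $\theta(x)\geq 1$ everywhere on $\cW$ via boundary monotonicity. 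With this in hand, White's argument for (2) runs verbatim. Your iterative/peeling picture can be made rigorous the same way: once $\theta\geq 1$ on $\cW$, subtract $|\cW|$, observe the remainder is again constrained stationary integral (here you finally use $\vec H_{\partial M}=0$ from (1) so that $|\cW|$ is stationary), and repeat.
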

     \begin{proof}
      (1) is proved in \cite[theorem 4]{White09}, so is (2) under the additional assumption that $V$ is stationary. The only place stationarity is used is that $x\in spt(V)\cap \cW$ implies $\theta^n(x, \|V\|) \geq 1$, which also holds for constrained stationary integral varifolds in mean convex domain by combining (1) and the proof of corollary \ref{Cor_Constr stny_Bdy density lbd}.
     \end{proof}
     
     The following lemma guarantees that constrained stationary varifolds have locally bounded first variation. Recall $r_M$ is defined in (\ref{Pre_Notation_rel inj radi}). 
     \begin{Lem} \label{Lem_Constr stny_Loc bd 1st var}
      Let $V\in \cV_k(M)$ be constrained stationary in $U$ with $\|V\|(U)\leq \Lambda$; $U'\subset \subset U$. Then $\exists\ C = C(M, U, U')>0$ s.t. \[
       |\delta V(X)| \leq C\|V\|(U)\cdot \|X\|_{C^0}\ \ \ \forall X\in \scX_c(U')  \]
     \end{Lem}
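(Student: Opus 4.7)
The plan is to convert the one-sided inequality $\delta V(Y)\geq 0$ (for inward-pointing $Y$) into a two-sided bound on $\delta V(X)$ by adding a fixed inward-pointing reference field of size comparable to $\|X\|_{C^0}$. In other words, I want to exhibit a single smooth vector field $\tilde N$ on $M$, depending only on $M,U,U'$, such that both $\|X\|_{C^0}\,\tilde N \pm X$ lie in $\scX_c^+(U)$.

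First I would extend the inward normal field $\nu_{\partial M}$, which is well-defined on $B_{r_M}(\partial M)$, to a smooth vector field $N$ on all of $M$ with $|N|\leq 1$: multiply $\nu_{\partial M}$ by a cutoff supported in $B_{r_M}(\partial M)$ and equal to $1$ in a neighborhood of $\partial M$, extending by zero. Next, fix a smooth cutoff $\chi:M\to[0,1]$ with $\chi\equiv 1$ on $U'$ and $\mathrm{spt}(\chi)\subset U$, and set $\tilde N:=\chi N$. Then $\tilde N$ is compactly supported in $U$, equals $\nu_{\partial M}$ on $U'\cap\partial M$, satisfies $\langle \tilde N,\nu_{\partial M}\rangle=\chi\geq 0$ everywhere on $\partial M$, and has $C^1$-norm depending only on $M,U,U'$.

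Given $X\in\scX_c(U')$ set $a:=\|X\|_{C^0}$ and consider the two test fields $Y^\pm:=\pm X+a\tilde N$. Both are compactly supported in $U$, and on $\partial M$ one checks $\langle Y^\pm,\nu_{\partial M}\rangle\geq 0$: on $U'\cap\partial M$ the function $\chi$ equals $1$, so $\langle Y^\pm,\nu_{\partial M}\rangle=\pm\langle X,\nu_{\partial M}\rangle+a\geq -a+a=0$; on $\partial M\setminus U'$ we have $X=0$, so $\langle Y^\pm,\nu_{\partial M}\rangle=a\chi\geq 0$. Hence $Y^\pm\in\scX_c^+(U)$, and the constrained stationarity of $V$ gives $\delta V(Y^\pm)\geq 0$, which combined yields
\[
|\delta V(X)|\ \leq\ a\,|\delta V(\tilde N)|\ =\ \|X\|_{C^0}\,|\delta V(\tilde N)|.
\]

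Finally, the first-variation formula $\delta V(\tilde N)=\int \mathrm{div}^{S}\tilde N\,dV(x,S)$ together with the pointwise bound $|\mathrm{div}^{S}\tilde N|\leq k|\nabla\tilde N|$ yields $|\delta V(\tilde N)|\leq k\|\nabla\tilde N\|_{C^0}\|V\|(U)$. Setting $C:=k\|\nabla\tilde N\|_{C^0}$, which depends only on $M,U,U'$, completes the proof. No serious obstacle is anticipated; the one subtle point to watch is that the reference field $\tilde N$ must be inward-pointing (or zero) on \emph{all} of $\partial M$, not merely where $X$ is supported, which is precisely why $\chi$ is chosen nonnegative and globally defined.
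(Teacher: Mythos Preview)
Your proof is correct and follows essentially the same strategy as the paper: both introduce a fixed inward-pointing reference field (your $\tilde N=\chi N$, the paper's $\eta\,\nu_{\partial M}$) scaled by $\|X\|_{C^0}$ so that the sum with $\pm X$ lies in $\scX_c^+(U)$, then bound $|\delta V(X)|$ by the first variation of this reference field. The only cosmetic difference is that the paper first splits $X$ into its tangential part $X_0$ (for which $\delta V(X_0)=0$) and normal part before comparing to $\eta\,\nu_{\partial M}$, whereas you test $\pm X+a\tilde N$ directly; your route is marginally more streamlined but the idea is the same.
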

     \begin{proof}
      We shall work under Fermi coordinte in $B_{r_M}(\partial M)$, where $\nu_{\partial M}$ is extended by $\nabla_{\nu_{\partial M}}(\nu_{\partial M}) = 0$. Let $\eta\in C^1_c(B_{r_M}(\partial M)\cap U), [0,1])$ and equal to $1$ near $\partial M\cap U'$. For every $X\in \scX_c(U')$ with $\|X\|_{C^0}\leq 1$, let $X_0 := X - \eta\cdot \langle X, \nu_{\partial M}\rangle\nu_{\partial M}$. Notice that $\pm X_0 \in \scX_c^+(U)$ and $1\pm \langle X, \nu_{\partial M}\rangle \geq 0$, hence
      \begin{align*}
       \delta V(X) &= \int div^S X_0 + div^S(\eta\cdot \langle X, \nu_{\partial M}\rangle\nu_{\partial M})\ dV(x, S) \\
       & = \int div^S(\eta\cdot (\langle X, \nu_{\partial M}\rangle -1 )\nu_{\partial M})\ dV(x, S) + \int div^S(\eta\cdot \nu_{\partial M})\ dV(x, S) \\
       & \leq \int div^S(\eta\cdot \nu_{\partial M})\ dV(x, S) \leq C(M, U, U')\|V\|(U).
      \end{align*}
      where the first inequality follows from that $\eta\cdot (1- \langle X, \nu_{\partial M}\rangle )\nu_{\partial M} \in \scX_c^+(U)$. Similarly, 
      \begin{align*}
       \delta V(X) &= \int div^S(\eta\cdot (\langle X, \nu_{\partial M}\rangle +1 )\nu_{\partial M})\ dV(x, S) - \int div^S(\eta\cdot \nu_{\partial M})\ dV(x, S) \\
       &\geq \int div^S(\eta\cdot \nu_{\partial M})\ dV(x, S) \geq -C(M, U, U')\|V\|(U).
      \end{align*}
      where $C(M, U, U'):=\|\eta\cdot \nu_{\partial M}\|_{C^1}$.
     \end{proof}
     
     \begin{Cor} \label{Cor_Constr stny_Tan at Bdy}
      Let $V\in \cI\cV_n(M)$ be a constrained stationary integral varifold in $U$. Then $\forall x\in spt(V)\cap U\cap \partial M$, the tangent varifold of $V$ at $x$ exists and equals to $m|T_x\partial M|$ for some $m\in \NN$.
     \end{Cor}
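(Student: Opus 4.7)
The plan is to run a tangent-varifold blow-up at $x$, identify the limit as a constrained stationary cone in the tangent half-space, and then use the strong maximum principle above to pin it down as a multiple of $T_x\partial M$.

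First I would use boundary monotonicity (Lemma \ref{Lem_Constr stny_Bdy Mon}) together with the boundary density bound (Corollary \ref{Cor_Constr stny_Bdy density lbd}) to show that $\Theta := \lim_{r\to 0}\|V\|(\BB^L_r(x))/(\omega_n r^n)$ exists and is positive. Along any sequence $r_j\to 0$ I would then rescale to $V_j := (\eta_{x,r_j})_\sharp V$; the ambient $M_j := \eta_{x,r_j}(M)$ converges in $C^2_{loc}$ to the tangent half-space $T_x^+M$, which after choosing Fermi coordinates I identify with $\RR^{n+1}_+\subset \RR^L$ (so $T_x\partial M$ becomes $\partial \RR^{n+1}_+$). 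Using Lemma \ref{Lem_Constr stny_Loc bd 1st var} for locally uniform first variation bounds, Allard's compactness extracts a subsequential varifold limit $C\in \cI\cV_n(\RR^{n+1}_+)$, which by Lemma \ref{Lem_Constr stny_Cptness} is constrained stationary along $\partial \RR^{n+1}_+$.

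Second, the existence of $\Theta$ makes the mass ratio of $C$ at $0$ equal to the constant $\omega_n\Theta$ for every radius, so by the cone criterion in Lemma \ref{Lem_Constr stny_Bdy Mon} (applied in the flat half-space model, where $\Lambda=0$) the limit $C$ is a cone centered at $0$. Positivity of $\Theta$ gives $0\in spt(C)\cap \partial\RR^{n+1}_+$, and $\partial\RR^{n+1}_+$ is flat, hence trivially mean convex, so the strong maximum principle (Lemma \ref{Lem_Constr stny_Maxm Principle}) decomposes
\[
   C \;=\; m\,|T_x\partial M|\;+\;C_I, \qquad m\in \NN,\quad spt(C_I)\cap T_x\partial M=\emptyset.
\]
Because $m|T_x\partial M|$ is stationary and a cone, $C_I$ is again a constrained stationary integral cone in $\RR^{n+1}_+$. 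Picking any $y\in spt(C_I)$ and using cone invariance, $ty\in spt(C_I)$ for all $t>0$; letting $t\to 0^+$ would force $0\in spt(C_I)$, contradicting $spt(C_I)\cap T_x\partial M=\emptyset$ unless $C_I=0$. Hence $C = m|T_x\partial M|$, with $m=\Theta$ determined by the density and thus independent of the subsequence, so the full family $(\eta_{x,r})_\sharp V$ converges as $r\to 0^+$ to this cone.

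The main obstacle I anticipate lies in the first paragraph's setup: one must verify that the rescaled ambient manifolds $M_j$ satisfy $C^2_{loc}$ convergence uniformly enough to feed Lemma \ref{Lem_Constr stny_Cptness}, and that the cone conclusion in Lemma \ref{Lem_Constr stny_Bdy Mon} (stated in the model half-space) genuinely transfers to the blow-up limit $C$. Once these are in hand, the remaining steps are a routine assembly of the results already established in this subsection.
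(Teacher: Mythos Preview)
Your proposal is correct and follows essentially the same route as the paper: blow up using Allard compactness with the locally bounded first variation (Lemma~\ref{Lem_Constr stny_Loc bd 1st var}) and boundary monotonicity (Lemma~\ref{Lem_Constr stny_Bdy Mon}), identify the limit as a constrained stationary cone, split it via the strong maximum principle (Lemma~\ref{Lem_Constr stny_Maxm Principle}), kill the interior piece by the cone property, and conclude uniqueness from $m=\theta^n(x,\|V\|)$. The technical worries you flag are not real obstacles: the rescaled ambients $\eta_{x,r_j}(M)$ converge to $T_x^+M$ in $C^2_{loc}$ simply because $\partial M$ is smooth, and the cone criterion in Lemma~\ref{Lem_Constr stny_Bdy Mon} is stated precisely for the flat half-space model, which is exactly where the blow-up limit $C$ lives.
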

     \begin{proof}
      By lemma \ref{Lem_Constr stny_Cptness}, \ref{Lem_Constr stny_Bdy Mon}, \ref{Lem_Constr stny_Loc bd 1st var} and Allard compactness theorem  \cite[Chapter 8, 5.8]{Simon83_GMT}, up to a subsequence of $r_j\to 0$, $(\eta_{x, r_j})_{\sharp}V \wsto C \in \cI\cV(T_x^+(M))$. Moreover, by the rigidity part of lemma \ref{Lem_Constr stny_Bdy Mon}, $C$ is a constrained stationary cone along $T_x(\partial M)$ centered at $0$. Since $0\in spt(C)$, by lemma \ref{Lem_Constr stny_Maxm Principle}, $C = m|\partial M| + C_I$ for some stationary varifold $C_I$ supported in $T_x^+M \setminus T_x(\partial M)$ and $m\in \NN$. But since $C_I$ is also a cone, $C_I = 0$ and $C=m|\partial M|$ and hence is unique since $m = \theta^n(x, \|V\|)$ is independent of the converging sequences. 
     \end{proof}

    \subsection{Constrained embedded hypersurfaces} \label{Subsection_Constr emb}
     A major problem in the regularity theory of constrained stationary varifolds is that one should not expect embeddedness near $\partial M$. Consider for example, $M = \RR^{n+1}\setminus \BB^{n+1}_1(0)$, $V = |\SSp^n_1(0)|+|\{x_{n+1}=1\}|\in \cI\cV_n(M)$ is constrained stationary in $M$ but is supported on an immersed submanifold with self-touching points. We introduce in this subsection the following notion of constrained embedded hypersurfaces in order to describe the optimal regularity for constrained stationary varifolds. Suppose $U\subset M$ be a relatively open subset. Recall $Int(U) = U\setminus \partial M$.
     
     \begin{Def} \label{Def_Constr emb_hypersurface}
      Call $V\in \cI\cV_n(U)$ a \textbf{constrained embedded hypersurface} with optimal regularity if $\forall p\in U$, there's a neighborhood $U_p$ of $p$ in $U$, $m\in \NN\cup\{0\}$ and $C^{1,1}$ embedded hypersurfaces $\Sigma_j\subset U_p$ without boundary, $1\leq j\leq m$ such that 
      \begin{enumerate}
       \item[(1)] $V\llcorner U_p = \sum_{j=1}^m |\Sigma_j|$,
       \item[(2)] $\partial M \cap U_p \cap Clos(\Sigma_j)\setminus \Sigma_j = \emptyset$ and $\scH^{n-2}(U_p\cap Clos(\Sigma_j)\setminus \Sigma_j) = 0$,
       \item[(3)] If $q\in U_p\cap Clos(\Sigma_i)\cap Clos(\Sigma_j)$, then either $q\in \partial M$ or $Clos(\Sigma_i)\cap Int(U_p)$ coincide with $ Clos(\Sigma_j)\cap U_p$ on the connected component containing $q$.
      \end{enumerate}
      Let $Sing(V):= \{p\in spt(V)\cap Int(U): p \text{ is not a regular point of hypersurfaces above in }U_p\}$.
     \end{Def}
     Here a hypersurface $\Sigma\subset M$ is called $C^{1, \alpha}$ if locally it's the graph of some $C^{1,\alpha}$ function over some $n$-plane. 
     
     \begin{Rem} \label{Rem_Constr emb_Bd mean curv}
     (1) When a constrained embedded $V$ is constrained stationary along $\partial M$, it's clear that $spt(V)\cap Int(U)$ is a minimal hypersurface in $Int(U)$; Also for every $x\in \partial M\cap U$, suppose for some neighborhood $U_x$ of $x$ that $V\llcorner U_x = \sum_j|graph_{\partial M}(u_j)|$, since $u_j$ are $C^{1,1}$ and by lemma \ref{Lem_Constr stny_Maxm Principle}, $\{u_j = 0\}\subset \{y\in \partial M : H_{\partial M}\geq 0\}$. 
     Hence $\Delta_{\partial M}u_j=0$ a.e. $x\in \{u_j = 0\}$, $|graph_{\partial M}(u_j)|$ is constrained stationary in $U$ along $\partial M$ and has $L^{\infty}$-mean curvature $H_{\partial M}\cdot \chi_{\{u_j=0\}}$. 
     
     (2) Also note that a constrained embedded hypersurface $V$ could be viewed as the associated varifold of some immersed hypersurface. More precisely, consider $\Sigma\subset M\times \NN$ defined by \[
      \Sigma := \{(p,j) :  p\in spt(V)\setminus Sing(V), 1\leq j \leq \theta^n(x, \|V\|)\}     \]
      and endow topology on $\Sigma$ by specifying neighborhoods of points in $\Sigma$:
      \begin{enumerate}
       \item[•] If $(p, j)\in int(M)\times \NN\cap \Sigma$, $\cO\subset \Sigma$ is a neighborhood of $(p, j)$ iff $\exists U\ni p$ an open subset of $M$ such that $U\times\{j\}\subset \cO$;
       \item[•] If $(p, j)\in \partial M\times \NN\cap \Sigma$, let $U_p$ be a neighborhood of $p$ in $M$ such that $V\llcorner U_p = \sum_{i=1}^{m_p}|graph_{\partial M}(u_i)|$, where $0\leq u_1\leq u_2\leq ... \leq u_{m_p}$ are $C^{1,1}$ functions over $U_p\cap \partial M$, if $u_i(x) = u_j(x)$, then either they both vanish or $u_i = u_j$ on the connected component of $\{u_i >0\}$ containing $x$. By definition of $\Sigma$, $m_p\geq j$; for each $x\in U_p\cap \partial M$, define $l_j(x):= \inf\{l\in \NN: u_l(x) = u_j(x)\}$.
       
       Now, $\cO\subset \Sigma$ is a neighborhood of $(p, j)$ iff $\exists \cU\subset \partial M\cap U$ an open neighborhood of $p$ in $\partial M$ such that \[
        \{(x, u_j(x), j-l_j(x)+1 ): x\in \cU\} \subset \cO     \]
       where recall that $(x, u_j(x))$ represent a point in $spt(V)$ under Fermi coordinate near $\partial M$.
      \end{enumerate}
      We leave it to readers to verify that such $\Sigma \to M$ by projection onto first invariant is a $C^{1,1}$ immersed submanifold. 
     \end{Rem}
     
     \begin{Def} \label{Def_Constr emb_converg}
      Let $\{V_j\}, V$ be constrained embedded hypersurfaces with optimal regularity in $U$. Call $V_j$ \textbf{$C^1$-converges} to $V$ if $\forall p\in spt(V)\setminus Sing(V)$, there's some neighborhood $U_p\cap Sing(V)=\emptyset$ of $p$ such that
      \begin{enumerate}
       \item[(1)] If $p\in Int(U)$, then $U_p\cap \partial M = \emptyset$, $V\llcorner U_p = m|\Sigma|$ for some $m\in \NN$, $\Sigma\subset U_p$ properly embedded hypersurface and $V_j\llcorner U_p = \sum_{i=1}^m |graph_{\Sigma}(v_j^{(i)})|$ for some $v_j^{(i)}$ with $|v_j^{(i)}|_{C^1(\Sigma)}\to 0$ as $j\to \infty$.
       \item[(2)] If $p\in \partial M$, then $V\llcorner U_p = \sum_{i=1}^m |graph_{\partial M}(u^{(i)})|$, $V_j\llcorner U_p = \sum_{i=1}^m |graph_{\partial M}(u_j^{(i)})|$ for some $m\in \NN$, $u^{(i)}, u_j^{(i)}\in C^{1,1}(\partial M\cap U_p)$ and $|u^{(i)}-u_j^{(i)}|_{C^1}\to 0$. 
      \end{enumerate}
     \end{Def}

    \subsection{Obstacle problem} \label{Subsection_Obst}
     Here we only focus on minimal hypersurfaces with smooth obstacle.  Problems in two different settings are well studied, mainly for hypersurfaces in the Euclidean spaces.
     
     \textbf{Non-parametric obstacle problem} Let $\Omega\subset \RR^n$ be a smooth bounded convex domain. $\psi \in C^2(\bar{\Omega})$, $\varphi\in C^2(\partial \Omega)$. Consider the problem 
     \begin{enumerate}
      \item[(N)] Minimize \[
       \scA(u):=\int_{\Omega} \sqrt{1+|\nabla u|^2}\ dx   \]
       among all Lipschitz functions $u$ such that $u=\varphi$ on $\partial \Omega$ and $u\geq \psi$ in $\Omega$.
     \end{enumerate}
     
     The existence and uniqueness of solutions to problem (N) was studied by \cite{LewyStampacchia71} and \cite{GiaquintaPepe71} under the weaker assumptions on $\psi$, $\varphi$ and $\Omega$. See also \cite{Giusti72}.
     
     The regularity of minimizer $\bar{u}$ in problem (N) was also studied in \cite{LewyStampacchia71} and \cite{GiaquintaPepe71}. Assuming $\psi\in H^{2,p}$, they are able to prove $\bar{u}\in H^{2, p}$, $\forall n< p <+\infty$. Using an approximating argument, \cite{BrezisKinderl74} shown that $\bar{u}$ is $C^{1,1}$ provided $\psi\in C^2(\bar{\Omega})$. 
     
     A linearization of problem (N) is studied in \cite{CaffarKinderle80}, where they considered the minimizer of enegry functional $\scE(u):= \int_{\Omega} |\nabla u|^2\ dx$ instead of area functional $\scA$, and obtained $C^{1, \alpha}$-regularity for minimizers by a Harnack typed inequality only assuming $\psi\in C^{1,\alpha}(\bar{\Omega})$, $0\leq \alpha\leq 1$. Their approach was generalized in \cite{Lin85} to deal with the non-parametric obstacle problem for area functional. 
     
     The regularity result we shall use in the rest of this paper is
     \begin{Thm} [Interior Regularity] \label{Thm_Survey Obst_Reg for non-para}
      Let $M\subset \RR^L$ be as in subsection \ref{Subsection_Notations}, $\alpha \in (0,1)$. Then there exists $r_0=r_0(M) \in (0, r_M /2)$ such that, if $r<r_0$, $y\in \partial M$, $u\in C^1(\cB_r(y), [0,r))\cap C_{loc}^{1,\alpha}(\cB_r(y)\setminus \{y\})$ with $|\nabla u|\leq 1$ and $|graph_{\partial M}(u)|$ constrained stationary in $C_r(y)$ along $\partial M$, then $u\in C^{1,1}_{loc}(\cB_r(y))$ and for every compact subset $\cK\subset \cB_r(y)$, \[
       |\nabla u(x) - \nabla u(x')|\leq C(M, \cK, r_0)|x-x'|\ \ \ \text{ for every }x, x'\in \cK   \] 
     \end{Thm}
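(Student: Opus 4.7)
The plan is to reduce the constrained stationarity of $V = |graph_{\partial M}(u)|$ to a one-sided variational inequality for $u$ in Fermi coordinates, and then adapt the $C^{1,1}$-regularity theory for the non-parametric obstacle problem (Brezis-Kinderlehrer, Caffarelli-Kinderlehrer, Lin) to the present geometric setting. The obstacle is the zero function over $\partial M$, and the pullback of the ambient metric contributes only smooth lower-order terms that do not affect the core argument.

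First I translate the condition $\delta V(X) \geq 0$ for $X \in \scX_c^+$ into a variational inequality for $u$. In Fermi coordinates the area of $V$ reads $\scA(u) = \int_{\cB_r(y)} F(x, u, \nabla u) \, d\scH^n(x)$ for a smooth integrand $F$ built from the pullback metric. A vector field $X = \varphi \cdot \nu_{\partial M}$ with $\varphi \in C^1_c(C_r(y))$ and $\varphi|_{\partial M} \geq 0$ produces, to first order, the graphical variation $u \mapsto u + t\eta$ with $\eta(x) := \varphi(x, u(x))$, and the constraint $\varphi \geq 0$ on $\partial M$ becomes $\eta \geq 0$ on $\{u = 0\}$. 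Thus $\delta V(X) \geq 0$ is equivalent to $\int (F_p \cdot \nabla \eta + F_u \, \eta) \, d\scH^n \geq 0$ for every $\eta \in C^1_c(\cB_r(y))$ with $\eta \geq 0$ on $\{u = 0\}$. Equivalently, the Euler-Lagrange operator $Mu$ defines a nonnegative Radon measure supported on the contact set $\{u = 0\}$, while $Mu = 0$ classically on $\{u > 0\}$; in particular $u$ is smooth on $\{u > 0\}$ by quasilinear elliptic regularity, and $H_{\partial M} \geq 0$ on $\{u = 0\}$ by Lemma \ref{Lem_Constr stny_Maxm Principle}.

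Next I approximate by the $C^{2,\alpha}$ solutions $u_\epsilon$ on some $\cB_{r'}(y) \subset\subset \cB_r(y)$ of the penalized equation $Mu_\epsilon = \beta_\epsilon(u_\epsilon)$ with boundary values $u_\epsilon = u + \epsilon$ on $\partial \cB_{r'}(y)$, where $\beta_\epsilon$ is a smooth monotone penalty supported in $\{t < \epsilon\}$. The uniform bound $|\nabla u| \leq 1$ together with De Giorgi-Nash-Moser theory gives $u_\epsilon \to u$ in $C^1_{loc}(\cB_{r'}(y))$ as $\epsilon \to 0$, so a uniform $C^{1,1}$ bound on $u_\epsilon$ descends to $u$.

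The main obstacle is establishing the uniform second-derivative bound on $u_\epsilon$, since a direct energy test yields only one-sided control on $D^2 u_\epsilon$. The standard resolution, due to Caffarelli and adapted to the minimal surface operator in \cite{Lin85}, is to prove the quadratic decay $u_\epsilon(x) \leq C \cdot \mathrm{dist}(x, \{u_\epsilon \leq \epsilon\})^2$ near free boundary points, by comparison with an explicit radial barrier built from the linearization of $M$ about zero. Combined with interior Calderon-Zygmund and Schauder estimates on the non-contact region, and the vanishing of $D^2 u$ on the contact set (since the obstacle is flat in Fermi coordinates), this produces a uniform $C^{1,1}$ bound on $u_\epsilon$ depending only on $\|u\|_{C^1}$, the geometry of $\partial M$ inside $\cB_r(y)$, and $\mathrm{dist}(\cK, \partial \cB_r(y))$, as claimed.
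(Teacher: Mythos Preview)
Your reduction of constrained stationarity to the one-sided variational inequality for $u$ in Fermi coordinates is correct and is exactly how the paper sets things up in the appendix (equations (A.3)--(A.4)). The endgame you describe --- quadratic growth of $u$ away from the contact set, combined with interior Schauder estimates on $\{u>0\}$ and the trivial bound on $\{u=0\}$ --- is also the paper's endgame.

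Where you diverge from the paper is in the middle. The paper does \emph{not} penalize: it works directly on $u$, first rescaling so that the lower-order coefficients $\partial_x A$, $\partial_z A$, $B$ are small, and then, at each contact point $x_0$, comparing $u$ with the solution $v$ of $\scM v=0$ in $B_r(x_0)$ with $v=u$ on $\partial B_r(x_0)$, together with the solution $\bar v$ having zero boundary data. The maximum principle gives $0\le u-v\le C\|v^-\|_{C^0}$, and the Harnack inequality applied to the nonnegative solution $v-\bar v$ (which vanishes at $x_0$ up to $O(r^2)$) yields $\sup_{B_{r/2}(x_0)} v\le C r^2$, hence $u\le Cr^2$. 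This is precisely the Caffarelli--Kinderlehrer/Lin mechanism you cite, but the operative tool is the Harnack inequality for the harmonic replacement, not a ``radial barrier''; an explicit quadratic barrier has the wrong sign to sit above $u$ from the contact point and will not by itself produce the quadratic-growth bound. The paper also needs a separate bootstrap (rescaling, then $C^{1,\beta}$ estimate via the weak equation $\scM u = B(x,0,0)\chi_{\{u=0\}}$, then a second pass) to remove the a priori isolated non-$C^{1,\alpha}$ point at $y$; your penalization scheme sidesteps this if the convergence $u_\epsilon\to u$ is justified, but that identification itself relies on uniqueness for the variational inequality and a uniform $L^\infty$ bound on $\beta_\epsilon(u_\epsilon)$, neither of which you state. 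In short: your outline is salvageable, but the penalization layer is unnecessary, and ``radial barrier'' should be replaced by ``Harnack for the harmonic replacement'', which is both what Lin does and what the paper does.
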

     
     For later applications, we include the case with isolated singularity. The proof essentially follows \cite{Lin85}. For some technical reason, we have to assume a priori $C^{1,\alpha}$ regularity instead of only $C^1$ regularity. But the point is we don't need a priori a uniform $C^{1,\alpha}$ bound. For sake of completeness, we include in the Appendix the proof of theorem \ref{Thm_Survey Obst_Reg for non-para} and the following theorem \ref{Thm_Survey Obst_Reg for para}. \\

     \textbf{Parametric obstacle problem}
      Let $\Omega\subset \RR^{n+1}$ be a bounded domain, $E_0\subset\RR^{n+1}$ be a set of finite perimeter. Consider the following obstacle problem for minimizing boundary
     \begin{enumerate}
      \item[(P)] Minimize $\bfM(\partial [E])$ among all Cacciappoli sets $E\subset \RR^{n+1}$ with $spt([E]-[E_0])\subset \bar{\Omega}$.
     \end{enumerate}
      
     Regularity of solution $\bar{E}$ to Problem (P) was first studied by \cite{Miranda71}, in which he shown $\partial_*\bar{E}$ is of class $C^1$ near $\partial \Omega$ provided $\partial \Omega$ is $C^1$ ( Call a hypersurface \textsl{of class $C^{k, \alpha}$} if locally it's the graph of some $C^{k,\alpha}$ function ). Combined with the result in non-parametric problem, $\partial_*\bar{E}$ is $C^{1,\alpha}$ if $\partial \Omega$ is $C^{1,\alpha}$, $0\leq \alpha\leq 1$. Different approaches were exploited in \cite{BarozziMassari82} and \cite{Tamanini82} where $C^{1,\alpha}$ regularity ($\alpha\in (0, 1)$) of $\partial_*\bar{E}$ is proved assuming $H^{2,p}$ or $C^{1,\alpha}$ regularity of $\partial \Omega$ correspondingly. We asserts that \cite{Tamanini82} proves a general regularity result on reduced boundary with good excess estimate (see also \cite{Almgren75, Bombieri82} ), hence directly generalize to minimizing boundary with obstacle in a manifold. The following regularity for minimizing currents with obstacle in a compact manifold with boundary will be used in section \ref{Section_Minmax}.

     \begin{Thm} \label{Thm_Survey Obst_Reg for para}
      Let $M\subset \RR^L$ be as in subsection \ref{Subsection_Notations}, $U \subset M$ be a relative open subset with non-empty intersection with $\partial M$. Suppose $T\in \cZ_n(U)$ minimizes mass in $\{S\in \cZ_n(U) : spt(T-S)\subset\subset U\}$. Then $\forall y\in U\cap \partial M$, there's a neighborhood $U_y$ of $y$ and $C^{1,1}$ functions $u_1\geq u_2\geq ... \geq u_q\geq 0$, $|\nabla u_j|\leq 1$ such that 
      \begin{enumerate}
       \item[(1)] $\pm T\llcorner U_y = \sum_{j=1}^q [graph_{\partial M}(u_j)]\llcorner U_y$. 
       \item[(2)] For every $x\in U_y\cap \partial M$, if $u_i(x)=u_j(x)$, then either $u_i(x)=u_j(x) = 0$, or $u_i \equiv u_j$ on the connected component of $\{u_i>0\}$ containing $x$.
      \end{enumerate}
     \end{Thm}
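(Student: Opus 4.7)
The plan is to reduce the parametric obstacle problem near $\partial M$ to finitely many non-parametric ones and then apply Theorem \ref{Thm_Survey Obst_Reg for non-para} to each sheet. I would work in Fermi coordinates in a small half-cylinder $C_r^+(y)\subset U$ around $y$, so that $\partial M$ becomes $\{s=0\}$ and $M$ locally becomes $\{s\geq 0\}$. Since the flow of any $X\in \scX_c^+(C_r^+(y))$ preserves $M$ and produces an admissible competitor $(e^{tX})_{\sharp}T$ for $T$, mass-minimality of $T$ forces $\delta|T|(X)\geq 0$; hence $|T|$ is a constrained stationary integral $n$-varifold in $C_r^+(y)$ in the sense of Subsection \ref{Subsection_Constr stny vfd}. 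By Corollary \ref{Cor_Constr stny_Tan at Bdy}, every tangent varifold of $|T|$ at $y$ is a flat multi-plane $m|T_y\partial M|$ with $m\in \NN$.

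Next I would decompose $T$ into sheets. Since $C_r^+(y)$ is contractible, $T=\partial S$ for some $S\in \bfI_{n+1}(C_r^+(y))$, and the standard representation of top-dimensional integral currents via integer-valued $BV$ functions writes $S$ as a signed sum of characteristic currents of Caccioppoli sets. Combining this with the tangent cone analysis above, after possibly replacing $T$ by $-T$ to align orientations and shrinking $r$, one obtains
\[ T\llcorner C_r^+(y)=\sum_{j=1}^{q}\partial[E_j] \]
for a finite nested family $M\cap C_r^+(y)\supseteq E_1\supseteq E_2\supseteq \cdots \supseteq E_q$ of Caccioppoli sets. A layer-cake argument using the mass-minimality of $T$ then shows that each $E_j$ is individually perimeter-minimizing in $C_r^+(y)$ subject to the obstacle $E_j\subseteq M$. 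Invoking the classical boundary regularity of minimizing Caccioppoli sets with smooth obstacle in the spirit of Miranda \cite{Miranda71} and Tamanini \cite{Tamanini82} (whose excess-decay arguments extend to smooth obstacles in a manifold, as noted just before the statement), and using the flatness of the tangent cone at $y$, one obtains, after shrinking $r$ further, that each $\partial E_j\cap C_r^+(y)$ coincides with $graph_{\partial M}(u_j)$ for some $u_j\in C^1(\cB_r(y),[0,r))\cap C_{loc}^{1,\alpha}(\cB_r(y)\setminus\{y\})$ with $|\nabla u_j|\leq 1$, and the nesting gives $u_1\geq u_2\geq \cdots \geq u_q\geq 0$. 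Since each $|graph_{\partial M}(u_j)|$ is itself a constrained stationary varifold of the regularity required by the hypothesis, Theorem \ref{Thm_Survey Obst_Reg for non-para} immediately upgrades every $u_j$ to $C^{1,1}_{loc}(\cB_r(y))$.

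The touching alternative in (2) follows from the strong maximum principle / unique continuation for the minimal surface equation: if $u_i(x)=u_j(x)>0$ for $i<j$, both sheets are smooth minimal hypersurfaces in a neighborhood of the touching point (the obstacle being inactive there), they meet tangentially by the $C^{1,1}$-graphical description, and must therefore coincide on the connected component of $\{u_i>0\}$ containing $x$. The step I expect to be the main obstacle is the boundary $C^{1,\alpha}$-decomposition: verifying rigorously that $T$ really does split as a nested finite sum of individually obstacle-minimizing Caccioppoli sets, and that the Tamanini-style excess-decay argument, originally phrased for Euclidean obstacles, transfers uniformly to smooth obstacles in a Riemannian manifold. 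Once this is in place, Theorem \ref{Thm_Survey Obst_Reg for non-para} provides the $C^{1,1}$ upgrade as a black box, and the ordering and touching alternative are essentially automatic.
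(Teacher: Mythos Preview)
Your strategy is the same as the paper's: decompose $T$ into nested boundaries of Caccioppoli sets, show each piece is itself constrained minimizing, apply Miranda--Tamanini for $C^{1,\alpha}$, then feed each sheet into Theorem~\ref{Thm_Survey Obst_Reg for non-para} for the $C^{1,1}$ upgrade, and deduce (2) from unique continuation. The paper likewise works in $\tilde{U}\subset \tilde{M}$, invokes the standard decomposition $T=\sum_{j\in\ZZ}\partial[U_j]$ with $\|T\|=\sum_j\|\partial[U_j]\|$, and observes that mass additivity forces each $\partial[U_j]$ to be constrained minimizing.

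The one place where your sketch is too quick is the orientation alignment. Writing ``after possibly replacing $T$ by $-T$'' suggests a single global sign flip suffices, but the real issue is sheetwise: in the nested family $\{U_j\}$, some $U_j$ may contain the outside region $V:=\tilde{U}\setminus U$ while others satisfy $U_j\subset U_0\setminus V$, and these two types cannot both be written as graphs over $\partial M$ with a common orientation. The tangent-cone statement from Corollary~\ref{Cor_Constr stny_Tan at Bdy} is about the varifold $|T|$ and does not by itself rule out this mixing at the level of currents. The paper isolates exactly this point as its central \emph{Claim}: if $U_0\supset V$ and $U_1\subset U_0\setminus V$ are both constrained minimizing, then $U_1\cap B_{r_1}(y)=\emptyset$ for a uniform $r_1$. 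The proof is a short blow-up: rescale around $y_j\to y_\infty$, use Miranda's $C^1$ graphicality with small gradient for $T_0^{(j)}$ to trap $U_1^{(j)}$ in a thin slab, and pass to the limit to force $T_1^{(\infty)}=0$, contradicting the density lower bound from Lemma~\ref{Lem_Constr stny_Bdy Mon} and Corollary~\ref{Cor_Constr stny_Bdy density lbd}. Once this Claim is in hand, all surviving sheets near $y$ have $U_j'\supset V$, and your remaining steps go through verbatim. You correctly flagged this decomposition step as the main obstacle; the missing ingredient is precisely this blow-up argument excluding wrong-side sheets.
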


   \section{A structure theorem near the boundary} \label{Section_Str thm}
    The goal of this section is to prove the following structure theorem of a class of constrained stationary varifold near $\partial M$. Throughout the section, $U\subset M$ be a relatively open subset with nonempty intersection with $\partial M$, $\cK\subset \partial M\cap U$ be a compact subset. 
    Keep working under Fermi coordinates near $\partial M$. Use $x, y, z$ to denote points on $\partial M$ and $p = (x, t), q=(y, s)$ to denote points in $Int(M)$. We shall not distinguish between $x$ and $(x, 0)$.
    
    \begin{Def}
     Call $V\in \cV_n(Int(U))$ a \textbf{stable minimal hypersurface with multiplicity}, if there exists a countable family of disjoint properly embedded stable minimal hypersurfaces $\{\Sigma_j\}_{j\geq 1}$ with optimal regularity in $Int(U)$ and $m_j\in \NN$ such that
     \begin{enumerate}
      \item[•] $\bigcup_{j\geq 1} \Sigma_j$ has no accumulation point in $Int(U)$, i.e. $Clos(\bigcup_{j\geq 1} \Sigma_j)\cap Int(U) = \bigcup_{j\geq 1} \Sigma_j$.
      \item[•] $V = \sum_j m_j|\Sigma_j|$
     \end{enumerate}
     Call points in $\bigcup_{j\geq 1}Reg(\Sigma)$ \textbf{regular points} of $V$ in $Int(U)$.\\
     
     Call $V\in \cI\cV(U)$ \textbf{constrained stable varifold} if $V$ is constrained stationary in $U$ and a stable minimal hypersurface with multiplicity in $Int(U)$. Call it \textbf{constrained embedded stable minimal hypersurface} if in addition it's a constrained embedded hypersurface.
    \end{Def}
    
    \begin{Thm} \label{Thm_Strc thm_Main}
     Suppose $V\in \cI\cV_n(M)$ be a constrained stable varifold in $U$ along $\partial M$ with $\|V\|(U)\leq \Lambda$. 
     
     Then $\exists\ r_0=r_0(M, U, \cK, \Lambda)\in (0, dist(\cK, U^c)/4)$, $m_0 = m_0(M, U, \cK, \Lambda)< +\infty$ such that, for every $y\in \cK$, \[
      V\llcorner C_{r_0}(y) = V_0 + V_+   \]  
     where $V_+$ is a stable minimal hypersurfaces with multiplicity in $C_{r_0}(y)$ and $spt(V_+)\cap B_{r_0/2}(\partial M) = \emptyset$; $V_0 = \sum_{j=1}^m|graph_{\partial M}(u_j)|$ for some $0\leq m\leq m_0$ and $C^1$ functions $u_j: \cB_{r_0}(y) \to [0, 3r_0/4)$ satisfying 
     \begin{enumerate}
      \item[(1)] $spt(V_0)\cap spt(V_+) = \emptyset$;
      \item[(2)] $u_1\geq u_2\geq ...\geq u_m$, $|\nabla u_j|\leq 1/2$;
      \item[(3)] If $y'\in \cB_{r_0}$ such that $u_i(y') = u_j(y')$, then either $u_i(y')=0$ or $u_i\equiv u_j$ on the connected component of $\{u_i>0\}$ containing $y'$.
     \end{enumerate} 
     If further, $u_j \in C^{1,1}(\cB_{r_0}(y))$, then $|graph_{\partial M}(u_j)|$ is a constrained stationary varifold in $C_{r_0}(y)$.
    \end{Thm}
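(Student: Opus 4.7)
The plan is to establish the graphical decomposition by combining interior Schoen-Simon regularity for stable minimal hypersurfaces with the boundary monotonicity formula (Lemma~\ref{Lem_Constr stny_Bdy Mon}) and the planar tangent cone classification (Corollary~\ref{Cor_Constr stny_Tan at Bdy}), via a blow-up contradiction argument. First, the mass bound $\|V\|(U)\leq \Lambda$ combined with boundary monotonicity yields a uniform upper bound $\theta^n(y,\|V\|)\leq m_0(M,U,\cK,\Lambda)$ for $y\in \cK$, and by Corollary~\ref{Cor_Constr stny_Tan at Bdy} this density is always a positive integer; interior stability provides, via Schoen-Simon, uniform curvature estimates on the regular part of $V$ on any interior ball with controlled mass ratio, together with compactness in the class of stable minimal hypersurfaces with multiplicity.

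Suppose the theorem fails. Then there exist constrained stable $V_j$ with $\|V_j\|(U)\leq \Lambda$, boundary points $y_j\in \cK$, and scales $r_j\searrow 0$ on which no such decomposition holds inside $C_{r_j}(y_j)$. Rescale by $\eta_{y_j,r_j}$ and pass to a subsequence; using Lemma~\ref{Lem_Constr stny_Cptness}, Lemma~\ref{Lem_Constr stny_Loc bd 1st var}, and Allard compactness, extract a subsequential limit $V_\infty$ on the half-space $\RR^{n+1}_+$ that is constrained stationary along $\RR^n\times\{0\}$ and interior-stable. The rigidity in Lemma~\ref{Lem_Constr stny_Bdy Mon} forces $V_\infty$ to be a cone, and Corollary~\ref{Cor_Constr stny_Tan at Bdy} forces its boundary tangent to be $m_\infty|\RR^n\times\{0\}|$, so $V_\infty = m_\infty|\RR^n\times\{0\}| + V_\infty^+$ with $V_\infty^+$ a stable minimal cone in the open half-space. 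An iterated blow-up at any point of $\mathrm{spt}(V_\infty^+)\cap (\RR^n\times\{0\})$, combined with the constrained maximum principle Lemma~\ref{Lem_Constr stny_Maxm Principle} and the interior classification of stable cones, rules out that $V_\infty^+$ accumulates on $\RR^n\times\{0\}$; hence $V_\infty = m_\infty|\RR^n\times\{0\}|$ in a uniform neighborhood of the origin. Interior Schoen-Simon then upgrades varifold convergence of the rescaled $\tilde V_j$ to $C^1$ graphical convergence on $\BB_{1/2}^{n+1}\cap \{t>0\}$, contradicting the failure of the decomposition at scale $r_j$.

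Given the decomposition $V\llcorner C_{r_0}(y) = V_0 + V_+$ with $V_0 = \sum_{j=1}^{m}|graph_{\partial M}(u_j)|$, the ordering $u_1\geq \ldots \geq u_m$ and the bound $|\nabla u_j|\leq 1/2$ follow from the proximity of the rescaled $V$ to a horizontal planar cone, after further shrinking $r_0$. Alternative (3) combines the interior strong maximum principle for the minimal surface equation on connected components of $\{u_i>0\}\cap \{u_j>0\}$ with Lemma~\ref{Lem_Constr stny_Maxm Principle} where sheets touch $\partial M$. For the last assertion, if $u_j\in C^{1,1}(\cB_{r_0}(y))$, property (3) allows one to isolate $|graph_{\partial M}(u_j)|$ in a tubular neighborhood meeting no other sheet except along $\{u_j=0\}\subset \partial M$; testing the constrained stationarity of $V$ against arbitrary $X\in \scX_c^+$ supported in that neighborhood then yields $\delta|graph_{\partial M}(u_j)|(X)\geq 0$.

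The main obstacle I expect is the extraction of the planar limit in the second step: one must rule out that the blow-up contains stable sub-cones in the open half-space accumulating on $\RR^n\times\{0\}$, and upgrade varifold convergence to graphical convergence uniformly down to the boundary. Classical Schoen-Simon handles only the interior and does not by itself yield graphical control at $\partial M$, so both the boundary monotonicity rigidity and a quantitative one-sided maximum principle have to be invoked carefully; this is the technical heart of the argument.
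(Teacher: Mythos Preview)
Your blow-up strategy is in the right spirit and overlaps with how the paper proves its key tilt estimate (Lemma~\ref{Lem_Strc thm_tilt est}): there one contradicts regularity/tilt at \emph{interior} points $(y,t)$ with $t\to 0$, blows up, and uses Schoen--Simon together with Lemma~\ref{Lem_Constr stny_Maxm Principle} to conclude the limit is a sum of horizontal planes, hence the original point is regular with nearly horizontal tangent. But the paper does \emph{not} try to deduce the global $m$-sheet decomposition directly from a single boundary blow-up; that is where your outline has a genuine gap.

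Concretely: knowing that the blow-up at $y_j\in\partial M$ is $m_\infty|\RR^n\times\{0\}|$ and that Schoen--Simon gives local graphicality at each interior support point does not by itself produce globally defined $C^1$ functions $u_1\geq\cdots\geq u_m$ on $\cB_{r_0}(y)$. One must (i) show the number of sheets over each $x\in\cB_{r_0}(y)$ is finite and constant, and (ii) show that the ``top remaining sheet'' is a well-defined $C^1$ function all the way down to $\{u_i=0\}\subset\partial M$. The paper does this by an explicit \emph{peeling induction} (Lemma~\ref{Lem_Str thm_Induct lem of loc graph}): set $u_i(x):=\sup\{t:(x,t)\in\operatorname{spt}(\bar V_{i-1})\}$ and $\bar V_i:=\bar V_{i-1}-|graph_{\partial M}(u_i)|$. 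The hard step is proving each $\bar V_i$ still has positive lower density at boundary points of its support, so that $u_{i+1}\geq 0$ is continuous. Since $\bar V_i$ is \emph{not} constrained stationary, Lemma~\ref{Lem_Constr stny_Bdy Mon} does not apply; the paper establishes a ``quasi-monotonicity'' (inequality~(\ref{Str thm_quasi-bdy mon})) for $\bar V_i$ by carefully splitting the peeled graphs into those satisfying a smallness condition versus those strictly positive, and alternating interior monotonicity with an almost-unit-volume estimate. Your proposal does not address this, and the sentence ``Schoen--Simon then upgrades varifold convergence \ldots to $C^1$ graphical convergence'' hides exactly this issue: the limit lives on the boundary, so there is no interior regular point at which to invoke the $\epsilon$-regularity theorem to get a \emph{fixed} number of graphs over a \emph{fixed} ball.

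Your argument for the final assertion is also incorrect. You cannot isolate $|graph_{\partial M}(u_j)|$ from the other sheets by choosing $X\in\scX_c^+$ with small support: by (3), several sheets may coincide on all of $\{u_j=0\}\subset\partial M$, so any such $X$ sees them all and testing $\delta V(X)\geq 0$ does not yield $\delta|graph_{\partial M}(u_j)|(X)\geq 0$. The paper instead argues pointwise: since $u_j\in C^{1,1}$, the mean curvature of $graph_{\partial M}(u_j)$ is in $L^\infty$, vanishes on $\{u_j>0\}$ (minimal there), and equals $\vec H_{\partial M}$ a.e.\ on $\{u_j=0\}$; by Lemma~\ref{Lem_Constr stny_Maxm Principle} one has $\vec H_{\partial M}\cdot\nu_{\partial M}\geq 0$ there, hence $\vec H_{graph(u_j)}\cdot\nu_{\partial M}\leq 0$ everywhere, which is exactly constrained stationarity.
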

    An interesting analytic question is whether one can directly conclude that $u_j\in C^{1,1}$.
    
    In view of theorem \ref{Thm_Survey Obst_Reg for non-para}, a direct corollary of theorem \ref{Thm_Strc thm_Main} is the compactness of constrained embedded stable minimal hypersurfaces. The proof is left to readers.
    \begin{Cor} \label{Cor_Str thm_Cptness of c-stable min}
     Let $\{V_j\}_{j\geq 1}$ be a sequence of constrained embedded stable minimal hypersurfaces in $U$ along $\partial M$, with $\|V_j\|(U)\leq \Lambda$. Then up to a subsequence, there exists a constrained embedded stable minimal hypersurface $V_{\infty}\in \cI\cV_n(U)$ such that $V_j$ converges to $V$ in varifold sense and in $C^1$ sense (see definition \ref{Def_Constr emb_converg}).
    \end{Cor}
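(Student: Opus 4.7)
The plan is to combine Allard's compactness (together with Lemma \ref{Lem_Constr stny_Loc bd 1st var} and Lemma \ref{Lem_Constr stny_Cptness}) for the weak varifold limit, the Schoen–Simon compactness for stable minimal hypersurfaces away from $\partial M$, and the uniform $C^{1,1}$ estimate of Theorem \ref{Thm_Survey Obst_Reg for non-para} for the boundary sheets, gluing them via the structure theorem \ref{Thm_Strc thm_Main}.

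First I would exhaust $U$ by a nested sequence of compact subsets $\cK_\ell\subset\subset U$ with $\cK_\ell \cap \partial M$ exhausting $\partial M \cap U$. By Lemma \ref{Lem_Constr stny_Bdy Mon} the boundary monotonicity gives a uniform local mass bound, and by Lemma \ref{Lem_Constr stny_Loc bd 1st var} a uniform local bound on the first variation of $V_j$ on $\cK_\ell$. Allard's compactness theorem then yields a subsequence $V_j \wsto V_\infty \in \cI\cV_n(U)$, and Lemma \ref{Lem_Constr stny_Cptness} shows that $V_\infty$ is constrained stationary along $\partial M$ in $U$.

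Next I would upgrade this weak convergence to $C^1$-convergence as in Definition \ref{Def_Constr emb_converg}. For any $y\in \cK_\ell \cap \partial M$, Theorem \ref{Thm_Strc thm_Main} gives a uniform radius $r_0 = r_0(M,U,\cK_\ell,\Lambda)$ and uniform multiplicity bound $m_0$ so that, passing to a further subsequence,
\[
V_j\llcorner C_{r_0}(y) \;=\; \sum_{i=1}^{m} |graph_{\partial M}(u_{j,i})| \;+\; V_{j,+},
\]
with $m\le m_0$ independent of $j$, $|\nabla u_{j,i}|\le 1/2$, and $\mathrm{spt}(V_{j,+}) \cap B_{r_0/2}(\partial M) = \emptyset$. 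Because each $V_j$ is a \emph{constrained embedded} hypersurface, Remark \ref{Rem_Constr emb_Bd mean curv}(1) shows that each summand $|graph_{\partial M}(u_{j,i})|$ is itself constrained stationary, so Theorem \ref{Thm_Survey Obst_Reg for non-para} provides uniform $C^{1,1}$ bounds on compact subsets of $\cB_{r_0}(y)$. Arzelà–Ascoli then yields $u_{j,i}\to u_{\infty,i}$ in $C^1_{\mathrm{loc}}$ with $u_{\infty,i}\in C^{1,1}$. On the interior piece $V_{j,+}$, the stability and uniform mass bound let me apply the Schoen–Simon compactness theorem to obtain $C^1$-convergence (away from a codimension-$\ge 7$ singular set) to a stable minimal hypersurface with multiplicity $V_{\infty,+}$. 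By uniqueness of the weak limit these two local limits must assemble into $V_\infty\llcorner C_{r_0}(y)$, which matches the structural decomposition of Theorem \ref{Thm_Strc thm_Main} applied to $V_\infty$.

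Finally I would check that $V_\infty$ satisfies the axioms of a constrained embedded stable minimal hypersurface. The disjointness $\mathrm{spt}(V_{\infty,0})\cap \mathrm{spt}(V_{\infty,+}) = \emptyset$ and the ordering $u_{\infty,1}\ge\dots\ge u_{\infty,m}$ pass trivially to the $C^1$-limit. The coincidence condition (property (3) of Definition \ref{Def_Constr emb_hypersurface}) follows from the fact that for each $j$ the set $\{u_{j,i}>0\}$ is open and on its connected components $u_{j,i}$ coincides with any $u_{j,i'}$ that agrees at a point; this structural property is preserved under $C^1$-convergence together with the unique continuation for the (smooth, elliptic) minimal surface equation on the positivity set. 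Stability of $V_\infty$ in the interior passes through the limit by lower semicontinuity of the second variation on graphs converging in $C^1$, and the local finiteness of sheets follows from the uniform bound $m\le m_0$. The main technical obstacle is this last step: verifying that the coincidence/separation dichotomy (3) survives the $C^1$-limit, since two sheets of $V_j$ that agree on an open set of $\partial M$ may meet the limit in more subtle ways; this is handled by combining the strong maximum principle (Lemma \ref{Lem_Constr stny_Maxm Principle}) for the minimal surface PDE on $\{u_{\infty,i}>0\}$ with the fact that the free boundary set $\{u_{\infty,i}=0\}$ is characterized by $H_{\partial M}\ge 0$ as in Remark \ref{Rem_Constr emb_Bd mean curv}(1). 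A standard diagonal argument over the exhaustion $\cK_\ell$ then yields the global subsequential limit.
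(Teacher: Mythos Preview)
The paper does not give a proof of this corollary; it explicitly states ``The proof is left to readers.'' Your proposal follows exactly the route the paper sets up and clearly intends: varifold compactness via Lemmas \ref{Lem_Constr stny_Loc bd 1st var} and \ref{Lem_Constr stny_Cptness}, Schoen--Simon in the interior, the structure theorem \ref{Thm_Strc thm_Main} to write each $V_j$ as uniformly many $C^1$ graphs near $\partial M$, and then the uniform $C^{1,1}$ bound of Theorem \ref{Thm_Survey Obst_Reg for non-para} (applicable because, by Remark \ref{Rem_Constr emb_Bd mean curv}(1), each individual sheet of a constrained \emph{embedded} hypersurface is itself constrained stationary) to run Arzel\`a--Ascoli. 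Your handling of the coincidence condition (3) via the strong maximum principle on the positivity set is the standard and correct argument; note that smoothness of $u_{\infty,i}$ on $\{u_{\infty,i}>0\}$ follows because on that open set $u_{j,i}>0$ for large $j$, so the $u_{j,i}$ are smooth minimal graphs there and interior elliptic estimates upgrade the $C^1$ convergence. In short, your outline is correct and is precisely the argument the paper leaves implicit.
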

    
    The rest of this section is devoted to the proof of theorem \ref{Thm_Strc thm_Main}. We start with a tilt estimate of constrained stable varifolds near $\partial M$.
    \begin{Lem} \label{Lem_Strc thm_tilt est}
     Let $V$ be in theorem \ref{Thm_Strc thm_Main}. There exists a continuous function $\rho: (0, 1/2)\to (0, dist(\cK, U^c)/2)$ depending only on $M, U, \cK, \Lambda$ such that,  for every $\epsilon \in (0,1/2)$, if $p=(y, t)\in \cK\times (0, \rho(\epsilon)) \cap spt(V)$, then $V$ is regular near $p$ and $1-|\nu_V\cdot \nu_{\partial M}|(p)^2 < \epsilon$, where $\nu_V$ denotes the unit normal of $V$.
    \end{Lem}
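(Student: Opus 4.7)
The plan is a contradiction argument via a boundary blow-up. Suppose the conclusion fails; then there exist $\epsilon_0 \in (0, 1/2)$, constrained stable varifolds $V_j \in \cI\cV_n(M)$ with $\|V_j\|(U) \leq \Lambda$, and points $p_j = (y_j, t_j) \in spt(V_j)$ with $y_j \in \cK$ and $t_j \to 0$, such that for each $j$ either $V_j$ is not regular at $p_j$ or $1 - |\nu_{V_j} \cdot \nu_{\partial M}|^2(p_j) \geq \epsilon_0$.

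First I would set up the blow-up: after passing to a subsequence with $y_j \to y_* \in \cK$, rescale $\tilde V_j := (\eta_{(y_j, 0), t_j})_{\sharp} V_j$, so that $(0, 1) \in spt(\tilde V_j)$ and the rescaled ambient manifold $C^2_{loc}$-converges to the flat half-space $\RR^{n+1}_+ \cong T^+_{y_*}\tilde M$. The boundary monotonicity formula (Lemma~\ref{Lem_Constr stny_Bdy Mon}) applied to $V_j$ yields a uniform bound $r^{-n} \|V_j\|(\BB^L_r(y_j)) \leq C(M, U, \cK, \Lambda)$ for small $r$, hence $\|\tilde V_j\|(\BB^L_R(0)) \leq C R^n$ uniformly in $j$. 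Combined with Lemma~\ref{Lem_Constr stny_Loc bd 1st var} and Allard's compactness for integral varifolds of locally bounded first variation, a subsequence converges to some $V_\infty \in \cI\cV_n(\RR^{n+1}_+)$, which by Lemma~\ref{Lem_Constr stny_Cptness} is constrained stationary along $\partial \RR^{n+1}_+$. Interior stability of $\tilde V_j$ together with Schoen--Simon compactness for stable minimal hypersurfaces with multiplicity then promote $V_\infty$ on the open half-space to such a stable hypersurface (with codim-$7$ singular set), with $\tilde V_j \to V_\infty$ in $C^1$ on its regular set.

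The heart of the argument is a rigidity statement: $V_\infty$ is a locally finite union of hyperplanes parallel to $\partial \RR^{n+1}_+$. By the strong maximum principle (Lemma~\ref{Lem_Constr stny_Maxm Principle}) applied to the flat mean-convex boundary, $V_\infty = m_0 |\partial \RR^{n+1}_+| + V_{\infty, I}$ with $spt(V_{\infty, I}) \subset \{x_{n+1} > 0\}$. Reflecting $V_{\infty, I}$ across $\partial \RR^{n+1}_+$ produces a stable stationary integral varifold in $\RR^{n+1}$ of Euclidean mass growth whose support avoids $\{x_{n+1} = 0\}$; Schoen--Simon regularity combined with the spherical maximum principle---a compact minimal subvariety of $\SSp^n$ contained in a closed hemisphere must be a great sphere---rules out non-horizontal components (in particular Simons-type singular cones at infinity), forcing every component to be a horizontal hyperplane. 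Since $(0, 1) \in spt(V_\infty)$ then lies on such a hyperplane, it is a regular point of $V_\infty$ with horizontal tangent; the $C^1$-convergence forces $\tilde V_j$ to be regular at $(0, 1)$ for large $j$ with tangent planes tending to horizontal, contradicting both alternatives. The continuity of $\rho$ is arranged by smoothing the monotone function $\epsilon \mapsto \rho_0(\epsilon)$ produced by the compactness.

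The main obstacle is the rigidity step: classifying blow-up limits and in particular ruling out exotic stable minimal objects (Simons-type cones, higher-dimensional catenoids) supported strictly inside an open half-space. The reflection trick converts this into a global Euclidean problem, where the spherical maximum principle trapping minimal subvarieties inside a closed hemisphere is the crucial input.
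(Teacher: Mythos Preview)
Your contradiction/blow-up strategy matches the paper's proof exactly: rescale by $t_j$ based at the boundary point $(y_j,0)$, pass to a constrained stationary limit $P$ in $T_{y_0}^+M\cong\RR^{n+1}_+$, split $P=k|T_{y_0}\partial M|+P_I$ via Lemma~\ref{Lem_Constr stny_Maxm Principle}, show $P_I$ is a sum of horizontal hyperplanes, and conclude by Schoen--Simon $\epsilon$-regularity near the limit of $\eta_{y_j,t_j}(p_j)$.

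Where you diverge is the rigidity of $P_I$, and there is a gap. First, the reflection is superfluous: Lemma~\ref{Lem_Constr stny_Maxm Principle} already gives $spt(P_I)\cap\{x_{n+1}=0\}=\emptyset$ as closed sets, so $P_I$ is stationary (and, by Schoen--Simon compactness, a stable minimal hypersurface) in all of $\RR^{n+1}$ with no reflection needed. More substantively, the spherical maximum principle you invoke---that a compact minimal subvariety of $\SSp^n$ contained in a closed hemisphere is the equator---classifies only \emph{cones} lying in a half-space. It tells you that every tangent cone at infinity of a component of $spt(P_I)$ is a horizontal hyperplane, but it does not by itself force that component to be a hyperplane; passing from a planar blow-down back to the original (possibly non-smooth, possibly high-multiplicity) varifold is exactly the half-space theorem you still owe. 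The paper closes this in one line by citing Ilmanen's strong maximum principle \cite{Ilmanen96}, which directly gives that a stationary varifold supported in an open half-space of $\RR^{n+1}$ is a sum of hyperplanes parallel to the boundary. Once you invoke that, both the reflection and the spherical-link argument become redundant.
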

    \begin{proof}
     We shall argue by contradiction. If the statement is not true, then there's some $\epsilon>0$, $V_i\in \cI\cV_n(M)$ constrained stationary in $U$, stable minimal hypersurfaces in $Int(U)$ with $\|V_i\|(U)\leq \Lambda$ and $p_i=(y_i, t_i)\in spt(V)\cap Int(U)$ with $t_i\to 0$, $y_i \in \cK$ such that one of the following holds,
     \begin{enumerate}
      \item[(a)] $p_i$ is a singular point of $V\llcorner Int(U)$
      \item[(b)] $p_i$ is a regular point but $1-|\nu_V\cdot \nu_{\partial M}|(p)^2 \geq \epsilon$
     \end{enumerate}
     Suppose $y_i\to y_0 \in\cK$. By lemma \ref{Lem_Constr stny_Loc bd 1st var}, \ref{Lem_Constr stny_Bdy Mon}, \ref{Lem_Constr stny_Cptness} and Allard compactness \cite{Simon83_GMT}, up to a subsequence,\[
      (\eta_{y_i, t_i})_{\sharp}V_i \wsto P    \]
     for some $P\in \cI\cV_n(T_{y_0}^+M)$ constrained stationary along $T_{y_0}\partial M$. Thus by lemma \ref{Lem_Constr stny_Maxm Principle}, $P = k|T_{y_0}\partial M| + P_I$ for some $k\in \{0\}\cup \NN$ and stationary integral varifold $P_I$ in $T_{y_0}M$ with $spt(P_I)\subset Int(T_{y_0}^+ M)$. 

     On the other hand, since $V_i\llcorner Int(U)$ are stable minimal hypersurfaces, by Schoen-Simon compactness \cite{SchoenSimon81}, $P_I$ is a stable minimal hypersurface in $T_{y_0}M$. Since $spt(P_I)\subset Int(T_{y_0}^+ M)$, by strong maximum principle \cite{Ilmanen96}, $P_I = \sum_j |L_j|$ for some hyperplane $L_j \subset T_{y_0}M$ parallel to $T_{y_0}\partial M$. 
     
     Now that, consider (up to a subsequence) $\eta_{y_i, t_i}(p_i) \to p_0 \in Int(T_{y_0}^+ M)$. Since $p_0$ is a smooth point of $P_I$ with tangent plane parallel to $T_{y_0}\partial M$, by Schoen-Simon's $\epsilon$-regularity theorem \cite[theorem 1]{SchoenSimon81},  neither of (a) (b) is true.
    \end{proof}
    
    From now on, let $\rho$ be the function in lemma \ref{Lem_Strc thm_tilt est} with respect to some $\cK' \subset U\cap \partial M $ such that $\cK\subset Int(\cK')$.
    We shall also assume, WLOG, that $\rho < min\{1, dist(\cK, (\cK')^c)\}$. 
    The following is a direct corollary of lemma \ref{Lem_Strc thm_tilt est}, and the proof is left to readers.
    \begin{Cor} \label{Cor_Strc thm_Loc Graph}
     For every $k\in \NN$ and $p=(y, t)\in \cK \times (0, \rho(1/(10k^2))/2) \cap spt(V)$, the connected component of $spt(V)\cap C_{kt}(y)$ containing $p$ is the graph of some function $u\in C^{\infty}(\cB_{kt}(y))$ with \[
      0<|u|<2t,\ \ \ |\nabla u|< 1/(2k)     \]    
    \end{Cor}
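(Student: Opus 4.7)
The plan is to extend the local graph representation provided by Lemma \ref{Lem_Strc thm_tilt est} around $p$ to the entire base ball $\cB_{kt}(y)$ by a path-lifting argument, exploiting the simple-connectedness of $\cB_{kt}(y)$. First I apply Lemma \ref{Lem_Strc thm_tilt est} with $\epsilon = 1/(10k^2)$ to the compact set $\cK'$, after possibly further shrinking $\rho$ so that $k\rho(1/(10k^2)) < dist(\cK, (\cK')^c)/2$; this guarantees $\cB_{kt}(y) \subset \cK'$ whenever $(y, t) \in \cK \times (0, \rho(1/(10k^2))/2)$. The tilt estimate then says that at every $q = (z, s) \in \cK' \times (0, \rho(1/(10k^2))) \cap spt(V)$ the varifold $V$ is regular and its unit normal satisfies $|\nu_V \cdot \nu_{\partial M}|^2 > 1 - 1/(10k^2)$; translating this into graph form, if $V$ is locally $|graph_{\partial M}(v)|$ near $q$, then $|\nabla v|^2 = (1-|\nu_V\cdot\nu_{\partial M}|^2)/|\nu_V\cdot\nu_{\partial M}|^2 < 1/(9k^2)$, so $|\nabla v| < 1/(3k)$.

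Next, let $\Sigma$ denote the connected component of $spt(V) \cap C_{kt}(y)$ containing $p$. By the first step, $\Sigma$ is a smooth embedded hypersurface and the projection $\pi : \Sigma \to \partial M$, $(z, s) \mapsto z$, is a local diffeomorphism whose local inverse $v$ has $|\nabla v| < 1/(3k)$. I would then lift arbitrary smooth paths: given $\gamma : [0, 1] \to \cB_{kt}(y)$ with $\gamma(0) = y$, form the maximal lift $\tilde\gamma(\tau) = (\gamma(\tau), h(\tau))$ in $\Sigma$ starting at $p$. Along the lift $|h'(\tau)| < |\gamma'(\tau)|/(3k)$, so $|h(\tau) - t| < L_\gamma(\tau)/(3k) < t/3$ throughout, where $L_\gamma$ is arclength; hence $h \in (2t/3, 4t/3) \subset (0, kt)$ (using $k \geq 2$, which is forced by the requirement $p = (y, t) \in C_{kt}(y)$), and $\tilde\gamma$ stays in the open cylinder $C_{kt}(y)$. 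By maximality of the lift together with the local diffeomorphism property, $\tilde\gamma$ extends to all of $[0, 1]$.

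Since $\cB_{kt}(y)$ is simply-connected, two paths sharing endpoints lift to curves with the same final endpoint, so $u(z) := h(1)$ is a well-defined $C^\infty$ function on $\cB_{kt}(y)$ satisfying $0 < 2t/3 < u < 4t/3 < 2t$ and $|\nabla u| < 1/(3k) < 1/(2k)$. To conclude $\Sigma = graph_{\partial M}(u)$, I would note that $graph_{\partial M}(u) \subset \Sigma$ is non-empty, open in $\Sigma$ (local graph uniqueness) and closed in $\Sigma$ (continuity of $u$ together with closedness of $spt(V)$ inside $C_{kt}(y)$), so connectedness of $\Sigma$ forces equality. The main obstacle is the scale-matching: the tilt threshold $\epsilon = 1/(10k^2)$ is tuned precisely so that the resulting gradient bound $O(1/k)$ cancels the path-length factor $O(kt)$ and produces a $k$-independent height bound, which is what keeps the lifted path inside $C_{kt}(y)$ and hence in the regime where Lemma \ref{Lem_Strc thm_tilt est} applies; any weaker choice of $\epsilon$ would allow the lift to escape the cylinder before we have closed the loop.
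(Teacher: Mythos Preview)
The paper omits the proof entirely (``the proof is left to readers''), so there is nothing to compare against; your path-lifting argument is exactly the kind of direct verification the authors intend, and the overall strategy --- tilt estimate $\Rightarrow$ local graph with gradient $<1/(3k)$ $\Rightarrow$ continuation over $\cB_{kt}(y)$ with height confined to $(2t/3,4t/3)$ $\Rightarrow$ open--closed argument for $\Sigma=graph_{\partial M}(u)$ --- is correct.

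There is one slip worth fixing. You write ``lift arbitrary smooth paths'' and then assert $L_\gamma(\tau)/(3k)<t/3$, i.e.\ $L_\gamma(\tau)<kt$; this is false for a general smooth path in $\cB_{kt}(y)$ (a spiral can be arbitrarily long), and the subsequent simple-connectedness step inherits the same problem, since a homotopy between two short paths may pass through long ones where your height control is lost. The fix is painless: restrict to radial geodesics from $y$, for which $L_\gamma<kt$ automatically, and simply \emph{drop} the simple-connectedness argument --- it is redundant, because your final open--closed step already proves $\Sigma=graph_{\partial M}(u)$ once $u$ is defined on all of $\cB_{kt}(y)$. Also note the implicit bootstrap: the gradient bound along the lift presupposes the tilt estimate, which in turn needs $h(\tau)\in(0,\rho(1/(10k^2)))$; this is justified by a standard continuity (open--closed in $\tau$) argument, since the derived bound $h(\tau)\in(2t/3,4t/3)\subset(0,\rho)$ is strictly interior, but it is worth saying explicitly.
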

    
    Now we proceed the proof of Theorem \ref{Thm_Strc thm_Main}. For $y\in \cK$, let $r_0 = \rho(1/4000)/2$. Suppose $V\llcorner C^+_{3r_0}(y) = \sum_{j\geq 1}m_j|\Sigma_j|$ where $\Sigma_j$ are connected, disjoint stable minimal hypersurfaces. Let $J:= \{j\geq 1: \Sigma_j\cap B_{r_0/2}(\partial M) = \emptyset\}$; $V_+ := \sum_{j\in J}m_j|\Sigma_j|$ and $V_0 = V\llcorner C_{3r_0}(y) - V_+$. By corollary \ref{Cor_Strc thm_Loc Graph}, $spt(V_0)\subset B_{3r_0/4}(\partial M)$, $spt(V_0)\cap spt(V_+) = \emptyset$ and $V_0$ is also constrained stationary in $C_{3r_0}(y)\cap M$ along $\partial M$.
    
    Define $m: \cB_{r_0}(y)\to \ZZ\cup \{+\infty\}$ by,
    \begin{align}
     m(x):= \begin{cases} 0\ &\ \text{if }V_0 = 0 \\
      \sum_{p=(x, t)\in spt(V_0)} \theta^n(p, \|V_0\|)\ &\ \text{if }V_0\neq 0
     \end{cases}
    \end{align}     
    Note that by corollary \ref{Cor_Constr stny_Tan at Bdy} and lemma \ref{Lem_Strc thm_tilt est}, $\theta^n(p, \|V\|)\in \ZZ$ for $p\in spt(V_0)\cap C_{r_0}(y)$, hence $m(x)\in \ZZ\cup \{+\infty\}$. We shall show that $m$ is constant on $\cB_{r_0}(y_0)$, bounded uniformly from above.
    \begin{Lem}
     $m(x)\leq m_0$ for some $m_0=m_0(M, U, \cK, \Lambda, r_0)$ and every $x\in \cB_{r_0}(y)$.
    \end{Lem}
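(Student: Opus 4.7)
I would split $m(x) = m_b(x) + m_i(x)$ with $m_b(x) := \theta^n((x,0), \|V_0\|)$ (taken as zero if $(x,0) \notin spt(V_0)$) and $m_i(x):=\sum_{t>0,(x,t)\in spt(V_0)}\theta^n((x,t),\|V_0\|)$, and bound each separately.  Since $V_0$ is constrained stationary in $C_{3r_0}(y)\cap M$ along $\partial M$ and \emph{a fortiori} stationary with free boundary there, Lemma \ref{Lem_Constr stny_Bdy Mon} applied at $(x,0)\in\partial M$ gives that $r\mapsto e^{\Lambda r}\|V_0\|(\BB^L_r(x,0))/r^n$ is non-decreasing.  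Letting $r\to 0^+$ on the left and comparing with $r=r_0$ on the right (choosing $r_0<R_0$ if necessary) yields
\[
 \omega_n\,m_b(x)\;\leq\;\frac{e^{\Lambda r_0}}{r_0^{\,n}}\,\|V_0\|\bigl(\BB^L_{r_0}(x,0)\bigr)\;\leq\;\frac{e^{\Lambda r_0}\Lambda}{r_0^{\,n}}.
\]

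For $m_i(x)$, decompose $V\llcorner C_{3r_0}^+(y)=\sum_j m_j|\Sigma_j|$ and consider the sheets $\Sigma_j$ with $\Sigma_j\cap B_{r_0/2}(\partial M)\neq\emptyset$ (which are those contributing to $V_0$).  By Corollary \ref{Cor_Strc thm_Loc Graph} applied with $k=20$ (admissible since $r_0=\rho(1/4000)/2$), each point of $\Sigma_j$ at height $t\in(0,r_0)$ lies on a smooth nearly-horizontal graph over $\cB_{20t}(x')$ with $|\nabla u|<1/40$.  Iteratively reapplying the corollary at points of the current graph where $u>0$ and invoking local uniqueness of the graph component extends this to a maximal graph $graph_{\partial M}(u_{j,\alpha})$ on a domain $D_{j,\alpha}\subset\cB_{2r_0}(y)$ with $|\nabla u_{j,\alpha}|<1/40$.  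The gradient bound prevents $u_{j,\alpha}$ from rising to the top of the cylinder (as it can change by at most $1/40$ per unit horizontal distance), so the extension can fail only at interior boundary points of $D_{j,\alpha}$ where $u_{j,\alpha}\to 0$, i.e., where the sheet touches $\partial M$.  Each connected component of $\Sigma_j\cap C_{r_0}(y)$ is the restriction of such a graph, so vertical lines above $\cB_{r_0}(y)$ meet each component at most once.  The key claim is the uniform lower bound $\scH^n(D_{j,\alpha})\geq c\,r_0^{\,n}$ for some $c=c(M)>0$: if $D_{j,\alpha}\supset\cB_{r_0}(y)$ this is immediate from the gradient bound, and otherwise $u_{j,\alpha}\to 0$ on a subset of $\overline{D_{j,\alpha}}\cap\cB_{2r_0}(y)$, where the obstacle-problem regularity of Theorem \ref{Thm_Survey Obst_Reg for non-para} combined with a barrier argument using the boundary geometry forces a definite domain size.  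Granting the claim, summing $\sum_{j,\alpha}m_j\,\scH^n(graph_{\partial M}(u_{j,\alpha}))\leq\|V_0\|(C_{2r_0}(y))\leq\Lambda$ gives $\sum_{j,\alpha}m_j\leq\Lambda/(c\,r_0^{\,n})$, and hence $m_i(x)\leq\Lambda/(c\,r_0^{\,n})$.  Combining with the bound on $m_b(x)$ yields $m(x)\leq m_0(M,U,\cK,\Lambda,r_0)$.

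\textbf{Main obstacle.}  The essential technical step is the uniform lower bound $\scH^n(D_{j,\alpha})\geq c\,r_0^{\,n}$ in the \emph{coincidence} case, where the graph terminates by touching $\partial M$ strictly inside $\cB_{2r_0}(y)$.  The difficulty is ruling out very thin ``crescent''-shaped domains in which the sheet just barely enters the cylinder before touching $\partial M$; this requires quantitative control on how a small-tilt minimal graph approaches the obstacle, which one expects to extract from the $C^{1,1}$-regularity of the contact set together with interior monotonicity at a suitable point of $\Sigma_j$ inside $B_{r_0/2}(\partial M)$.  The verification that iterated application of Corollary \ref{Cor_Strc thm_Loc Graph} does yield a globally well-defined maximal graph (via uniqueness of local connected components) and that the extension terminates only at $\partial M$ are the remaining technical points.
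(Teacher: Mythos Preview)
Your bound on $m_b(x)$ via boundary monotonicity is fine, and matches how the paper handles $\theta^n(x,\|V_0\|)$.  The problem is your treatment of $m_i(x)$.  You yourself flag the gap: the uniform lower bound $\scH^n(D_{j,\alpha})\geq c\,r_0^{\,n}$ in the coincidence case is not proved, and the tools you propose to fill it are not available here.  Theorem~\ref{Thm_Survey Obst_Reg for non-para} presupposes a single $C^1$ graph over a fixed ball, precisely the structure whose existence the present section is working toward; and no barrier argument from boundary geometry prevents a sheet from entering $C_{r_0}(y)$ over an arbitrarily small arc before touching $\partial M$.  At this stage nothing rules out infinitely many sheets with domains shrinking to a point on $\partial\cB_{r_0}(y)$, so counting sheets by total mass over $C_{2r_0}(y)$ does not work.

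The paper's proof bypasses this difficulty by never asking for a global area lower bound on a sheet.  It fixes $x$, lists the heights $t_1>t_2>\dots$ over $x$, and runs a \emph{telescoping monotonicity} argument: at scale $2t_i$, Corollary~\ref{Cor_Strc thm_Loc Graph} gives that the sheet $S_i$ through $(x,t_i)$ is a smooth minimal graph over $\cB_{2t_i}(x)$ with $0<u<2t_i$, hence stationary in $C_{2t_i}(x)$ and contributing mass $\geq e^{-\Lambda_1 t_i}\omega_n t_i^n$ in $\BB_{2t_i}(x)$ by interior monotonicity.  Subtracting $n_i|S_i|$ leaves a varifold $\hat V_i$ that is still constrained stationary in $C_{2t_i}(x)$, so boundary monotonicity (Lemma~\ref{Lem_Constr stny_Bdy Mon}) carries the density ratio down to scale $2t_{i+1}$.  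Iterating yields
\[
  \frac{e^{\Lambda_1 2r_0}}{\omega_n(2r_0)^n}\|V_0\|(\BB_{2r_0}(x))\ \geq\ 2^{-n}\sum_i n_i\ +\ \theta^n(x,\|V_0\|),
\]
which bounds $m(x)$ by $\Lambda$ and geometric constants.  The point is that each sheet only needs to be graphical over a ball of its \emph{own} height, which is exactly what Corollary~\ref{Cor_Strc thm_Loc Graph} provides.
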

    \begin{proof}
     For every $x\in \cB_{r_0}(y)$ fixed, let $\{t>0: (x, t)\in spt(V_0)\} = \{t_1>t_2>... (>0)\}$, $t_1<3r_0/4$; and let $n_i:= \theta^n((x, t_i), \|V_0\|)\in \NN$. By corollary \ref{Cor_Strc thm_Loc Graph}, for each $i\geq 1$, the connected component of $spt(V_0\llcorner C_{2t_i}(x))$ containing $(x, t_i)$, denoted by $S_i$, is a smooth minimal graph over $\cB_{2t_i}(x)$, and $\{S_i\}_{i\geq 1}$ are disjoint. Moreover \[
      \|S_i\|(\BB_{2t_i}(x))\geq e^{-\Lambda_1 t_i}\cdot \omega_n t_i ^n    \]
     for some $\Lambda_1 = \Lambda(M)\geq 0$ by interior monotonicity of area.\\
     Also, by lemma \ref{Lem_Constr stny_Bdy Mon}, suppose WLOG $\Lambda_1\geq 0$ such that $t\mapsto e^{\Lambda_1 t}\|V_0\|(\BB_t(x))/t^n$ is non-decreasing on $(0, 2r_0)$. Therefore 
     \begin{align*}
      \frac{e^{\Lambda_1 2r_0}}{\omega_n (2r_0)^n}\|V_0\|(\BB_{2r_0}(x))\geq 
      &\ \frac{e^{\Lambda_1\cdot (2t_1)}}{\omega_n (2t_1)^n}\|V_0\|(\BB_{2t_1}(x)) \\
      = &\ \frac{e^{\Lambda_1\cdot (2t_1)}}{\omega_n (2t_1)^n}\cdot n_1\|S_1\|(\BB_{2t_1}(x)) + \frac{e^{\Lambda_1\cdot (2t_1)}}{\omega_n (2t_1)^n}\|\hat{V}_1\|(\BB_{2t_1}(x)) \\
      \geq &\ n_1\cdot 2^{-n} + \frac{e^{\Lambda_1\cdot (2t_2)}}{\omega_n (2t_2)^n}\|\hat{V}_1\|(\BB_{2t_2}(x))\\
      = &\ n_1\cdot 2^{-n} + \frac{e^{\Lambda_1\cdot (2t_2)}}{\omega_n (2t_2)^n}\cdot n_2\|S_2\|(\BB_{2t_2}(x)) + \frac{e^{\Lambda_1\cdot (2t_2)}}{\omega_n (2t_2)^n}\|\hat{V}_2\|(\BB_{2t_2}(x)) \\
      \geq &\ (n_1+n_2)\cdot 2^{-n} + \frac{e^{\Lambda_1\cdot (2t_3)}}{\omega_n (2t_3)^n}\|\hat{V}_2\|(\BB_{2t_3}(x)) \\
      &\ ... \\
      \geq &\ (\sum_{i\geq 1} n_i)\cdot 2^{-n} + \theta^n(x, \|V_0\|)    
     \end{align*}
     where $\hat{V}_i$ is a constrained stationary integral varifold in $C_{2t_i}(x)$ defined inductively by \[
      \hat{V}_0 := V_0;\ \ \ \hat{V}_i := (\hat{V}_{i-1} - n_i|S_i|)\llcorner C_{2t_i}(x)     \]
     Hence, the lemma follows from $\|V_0\|(U)\leq \|V\|(U)\leq \Lambda$.
    \end{proof}
    
    Let $\bar{m}:=\sup_{x\in \cB_{r_0}(y)} m(x)$. WLOG $\bar{m}>0$. We are going to define inductively the functions $u_i$ in theorem \ref{Thm_Strc thm_Main}. Let  
    \begin{enumerate}
    \item[•] $\bar{V}_0:= V_0\llcorner C_{r_0}(y)$;
    \item[•] For $1\leq i\leq \bar{m}$, $u_i(x):= \sup\{t: (x, t)\in spt(\bar{V}_{i-1}) \}$, $x\in \cB_{r_0}(y)$;
    \item[•] For $1\leq i\leq \bar{m}$, $\bar{V}_i:= \bar{V}_{i-1} - |graph_{\partial M} (u_i)|$;
    \item[•] For $1\leq i\leq \bar{m}$, $x\in \cB_{r_0}(y)$,  \begin{align*}
     m_i(x):= \begin{cases} 0\ &\ \text{if }\bar{V}_i = 0 \\
      \sum_{p=(x, t)\in spt(\bar{V}_i)} \theta^n(p, \|\bar{V}_i\|)\ &\ \text{if }\bar{V}_i\neq 0
     \end{cases}
    \end{align*}  
    \end{enumerate}
    where we set $\sup \emptyset = -\infty$. The following lemma guarantees they are well-defined. (We set $m_0(x):=m(x)$.)
    \begin{Lem} \label{Lem_Str thm_Induct lem of loc graph}
     For $u_i$, $\bar{V}_i$ defined above, $1\leq i\leq \bar{m}$,
     \begin{enumerate}
     \item[(1)] $u_i$ are nonnegative $C^1$ functions on $\cB_{r_0}(y)$, and is $C^{\infty}$ in $\{x: u_i(x)>0\}$.
     \item[(2)] $\bar{V}_i$ are integral varifolds in $C_{r_0}(y)$ such that $\bar{V}_i\llcorner C^+_{r_0}(y)$ are stable minimal hypersurfaces with multiplicity supported in $spt(V_0)$.
     \item[(3)] $\forall x\in spt(\bar{V}_i)\cap \cB_{r_0}(y)$, the tangent varifold of $\bar{V}_i$ at $x$ is $k|T_x \partial M|$ for some $k\in \NN$.
     \item[(4)] $m_i(x) = m_{i-1}(x) - 1$, $\forall x\in\cB_{r_0}(y)$.
     \end{enumerate}  
    \end{Lem}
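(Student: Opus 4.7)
The plan is to prove (1)--(4) simultaneously by induction on $i$. The base case $i=0$ comes directly from the hypotheses on $V_0$: statement (3) at $i=0$ is Corollary \ref{Cor_Constr stny_Tan at Bdy} applied to the constrained stationary varifold $V_0$, and (2) is built into the construction of $V_0$. For the inductive step, assume (2)--(3) hold at level $i-1$; I would first establish (1) for $u_i$, then read off (2) and (3) for $\bar V_i$ from the subtraction, and finally verify (4) by a density count.

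For (1) at $x_0 \in \cB_{r_0}(y)$ with $u_i(x_0) > 0$, the point $(x_0, u_i(x_0))$ lies in $spt(\bar V_{i-1}) \subset spt(V_0)$ at height less than $3r_0/4$, so Corollary \ref{Cor_Strc thm_Loc Graph} produces a smooth function $v$ over a boundary neighborhood whose graph is the connected component of $spt(V_0)$ through $(x_0, u_i(x_0))$; since $u_i$ is a supremum and this entire graph lies in $spt(\bar V_{i-1})$, $u_i$ coincides with $v$ nearby and is $C^\infty$ on $\{u_i > 0\}$. At $x_0$ with $u_i(x_0) = 0$, upper semicontinuity follows from closedness of $spt(\bar V_{i-1})$ and combines with $u_i \geq 0$ to give continuity. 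For $C^1$ regularity there, for any $x_n \to x_0$ with $u_i(x_n) > 0$, the tilt estimate of Lemma \ref{Lem_Strc thm_tilt est} at $(x_n, u_i(x_n))$ forces $1 - |\nu_V \cdot \nu_{\partial M}|^2 \to 0$, which through the standard graph normal formula translates directly to $|\nabla u_i(x_n)| \to 0$; this identifies $\nabla u_i(x_0) = 0$ and gives continuity of $\nabla u_i$ across the free-boundary locus $\{u_i = 0\}$.

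For (2) and (3), over $C^+_{r_0}(y)$ and on $\{u_i > 0\}$ the graph agrees locally with a single sheet of the stable minimal structure of $\bar V_{i-1}$, so the subtraction $\bar V_i = \bar V_{i-1} - |graph_{\partial M}(u_i)|$ reduces that sheet's multiplicity by one and leaves a stable minimal hypersurface with multiplicity in $spt(V_0)$. For the boundary tangent at $x_0 \in \cB_{r_0}(y)$: the inductive tangent of $\bar V_{i-1}$ at $(x_0, 0)$ is $k_{i-1}|T_{x_0} \partial M|$; when $u_i(x_0) = 0$, the $C^1$ graph passes through $(x_0, 0)$ with $\nabla u_i(x_0) = 0$, so its tangent there is $|T_{x_0} \partial M|$ and subtraction yields $(k_{i-1}-1)|T_{x_0} \partial M|$, while when $u_i(x_0) > 0$ the graph misses $(x_0, 0)$ and the tangent is unchanged. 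Claim (4) then follows by counting densities at each $x$: the subtracted graph contributes density $1$ at precisely one height $u_i(x)$, so the sum of densities above $x$ drops by exactly one. The main obstacle in this scheme is the $C^1$ regularity of $u_i$ on $\{u_i = 0\}$, where the argument relies essentially on Lemma \ref{Lem_Strc thm_tilt est} rather than any estimate specific to $u_i$; a secondary subtlety is that $\bar V_i$ need not be constrained stationary for $i \geq 1$, so (3) must be propagated through the subtraction rather than reobtained from Corollary \ref{Cor_Constr stny_Tan at Bdy}.
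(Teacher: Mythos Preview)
Your inductive scheme is correct in outline and matches the paper's, but two genuine gaps remain, and the second is the heart of the lemma.

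\textbf{Gap 1: nonnegativity of $u_i$.} You write that upper semicontinuity ``combines with $u_i \geq 0$ to give continuity,'' but $u_i \geq 0$ is not automatic: by definition $u_i(x) = \sup\{t : (x,t)\in spt(\bar V_{i-1})\}$ with $\sup\emptyset = -\infty$, so you must rule out that the vertical segment over some $x$ misses $spt(\bar V_{i-1})$ entirely. The paper argues by contradiction: if $\{u_i<0\}\neq\emptyset$, upper semicontinuity and connectedness produce a ball $\cB\subset\{u_i<0\}$ with a boundary point $z$ where $u_i(z)=0$; then $z\in spt(\bar V_{i-1})$, the inductive hypothesis (3) gives tangent $k|T_z\partial M|$ with $k\geq 1$, yet $spt(\bar V_{i-1})\cap(\cB\times(-r_0,r_0))=\emptyset$ forces the blow-up to be supported in a half-space---a contradiction.

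\textbf{Gap 2: positivity of the tangent multiplicity in (3).} Your subtraction correctly yields that the tangent of $\bar V_i$ at $x_0\in spt(\bar V_i)\cap\partial M$ is $(k_{i-1}-1)|T_{x_0}\partial M|$ when $u_i(x_0)=0$, but you do not explain why $k_{i-1}-1\geq 1$. Equivalently, you must show $\theta^n(x_0,\|\bar V_i\|)>0$ for $x_0\in spt(\bar V_i)\cap\partial M$; zero density at a support point is not excluded a priori for general varifolds. For $V_0$ this followed from Corollary \ref{Cor_Constr stny_Bdy density lbd}, whose proof rests on the boundary monotonicity of Lemma \ref{Lem_Constr stny_Bdy Mon}---but as you yourself note, $\bar V_i$ need not be constrained stationary, so that monotonicity is unavailable. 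The paper's remedy is a \emph{quasi-monotonicity} estimate: for any $\epsilon>0$ there is $r_1>0$ so that $e^{\Lambda\eta}\|\bar V_i\|(\BB_\eta(x))/(\omega_n\eta^n) \geq e^{\Lambda\sigma}\|\bar V_i\|(\BB_\sigma(x))/(\omega_n\sigma^n) - 2\bar m^2\epsilon$ for $0<\sigma<\eta<r_1$. This is proved by writing $\bar V_i = V_0 - \sum_{l\leq i}|graph_{\partial M}(u_l)|$ and, on each scale, splitting the subtracted graphs into those that are globally positive (hence stationary, so their removal preserves constrained stationarity and exact monotonicity applies) and those that are nearly flat (whose mass ratio is within $\epsilon$ of $1$ by a direct area computation). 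This quasi-monotonicity, with $\epsilon$ small, is enough to rerun the proof of Corollary \ref{Cor_Constr stny_Bdy density lbd} and obtain the density lower bound. This step is the bulk of the paper's proof and is entirely absent from your proposal.
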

    \begin{Rem}
     By (4) of lemma \ref{Lem_Str thm_Induct lem of loc graph} and that $m_i\geq 0$, $m_{\bar{m}}\equiv 0$ and $V_0 = \sum_{i=1}^{\bar{m}} |graph_{\partial M}(u_i)|$. Together with (1), this proves the first part of theorem \ref{Thm_Strc thm_Main}. 
     
     If further, $u_j\in C^{1,1}(\cB_{r_0}(y))$, then (denote $|graph_{\partial M}(u_j)|$ by $\Gamma$ for simplicity) the mean curvature of $\Gamma$ is in $L^{\infty}(\ , \|\Gamma\|)$ satisfying \[
      \vec{H}_{\Gamma} = \vec{H}_{\partial M}\ \ \ \scH^n\text{-a.e. on }\{u_i = 0\}    \]
     Combined with lemma \ref{Lem_Constr stny_Maxm Principle}, we see $\vec{H}_{\Gamma} \cdot \nu_{\partial M} \leq 0$, hence $\Gamma$ is constrained stationary in $C_{r_0}(y)$ along $\partial M$. This proves the second part of theorem \ref{Thm_Strc thm_Main}.
    \end{Rem}

    \begin{proof}[Proof of lemma \ref{Lem_Str thm_Induct lem of loc graph}]:
     It's sufficient to prove (1)-(4) inductively in $i$ assuming they hold for $1\leq l\leq i-1$.\\

     (1) $u_i$ is clearly upper semi-continuous by definition; While by corollary \ref{Cor_Strc thm_Loc Graph}, together with the assumption that $spt(\bar{V}_{i-1})\subset spt(V_0)$ and $\bar{V}_{i-1}\llcorner C_{r_0}^+(y)$ is a stable minimal hypersurface with multiplicity, $u_i$ is lower semi-continuous, and then smooth, in open subset $\{u_i > 0\}\subset \cB_{r_0}(y)$.
     
     Note also that by assumption (4) for $1\leq l\leq i-1< \bar{m}$, $spt(\bar{V}_{i-1})\neq \emptyset$. Hence $\{u_i\geq 0\} \neq \emptyset$. To prove the continuity of $u_i$, it suffices to verify that $u_i\geq 0$. If not, then by upper semi-continuity of $u_i$, there exists an open ball $\cB\subset \{u_i < 0\}$ and some point $z\in \partial \cB\cap \{u \geq 0\}$. Moreover, by the continuity of $u_i$ on $\{u_i>0\}$, $u_i(z)=0$. By definition, $z\in spt(\bar{V}_i)$, then by the assumption (3) for $i-1$, \[
      (\eta_{z, \lambda})_{\sharp}\bar{V}_i \wsto k|T_z \partial M| \ \ \ \text{ as }\lambda\to 0    \]
     for some $k\geq 1$. On the other hand, by definition, $\cB\times (-r_0, r_0)\cap spt(\bar{V}_i) = \emptyset$, hence the weak limit above should be supported in a half space. Here comes a contradiction.
     
     That $u_i$ is differentiable and $C^1$ on $\cB_{r_0}(y)$ follows from corollary \ref{Cor_Strc thm_Loc Graph} and lemma \ref{Lem_Strc thm_tilt est}.\\
     
     (2) integrality of $\bar{V}_i$ follows from assumption (3) for $i-1$, that $u_i\in C^1(\cB_{r_0}(y))$ from (1) and that $graph_{\partial M}(u_i)\subset spt(\bar{V}_{i-1})$ by definition. Hence $spt(\bar{V}_i)\subset spt(\bar{V}_{i-1})\subset ... \subset spt(V_0)$. Since $V_0\llcorner C^+_{r_0}(y)$ is smooth minimal hypersurface with multiplicity and $\{u_l\}_{1\leq l\leq i}$ are $C^1$, $\bar{V}_i \llcorner C^+_{r_0}(y)$ is also a smooth minimal hypersurface with multiplicity. \\
     
     (3) $\forall x\in spt(\bar{V}_i \cap \cB_{r_0}(y))$ fixed, by assumption (3) for $i-1$ and that $u_i\in C^1(\cB_{r_0}(y))$, \[
      (\eta_{x, \lambda})_{\sharp}\bar{V}_i = (\eta_{x, \lambda})_{\sharp} \bar{V}_{i-1} - (\eta_{x, \lambda})_{\sharp }|graph_{\partial M}(u_i)|\ \wsto k|T_{p}\partial M|\ \ \ \text{ as }\lambda\to 0    \]
     for some $k\in \ZZ$. Thus, to prove (3) for $i$, it suffices to verify that $\theta^n(x, \bar{V}_i)>0$. The proof is similar to the one of corollary \ref{Cor_Constr stny_Bdy density lbd}. The only difference is that it's unclear whether $\bar{V}_i$ is constrained stationary in $C_{r_0}(y)$ or not, so lemma \ref{Lem_Constr stny_Bdy Mon} does not apply directly. In what follows, we shall establish a quasi-boundary monotonicity for $\bar{V}_i$.\\
     \textbf{Claim}: $\forall \epsilon >0$, there's an $r_1\in (0, r_0)$ such that, for every $0<\sigma<\eta<r_1$ and every $x\in \cB_{r_0}(y)$ such that $\BB_{2\eta}^L(x)\subset C_{r_0}(y)$, 
     \begin{align}
      \frac{e^{\Lambda \eta}}{\omega_n \eta^n}\|\bar{V}_i\|(\BB^L_{\eta}(x)) 
      \geq \frac{e^{\Lambda \sigma}}{\omega_n \sigma^n}\|\bar{V}_i\|(\BB^L_{\sigma}(x)) -2\bar{m}^2\epsilon   \label{Str thm_quasi-bdy mon}
     \end{align}
     Notice that by taking $\epsilon$ sufficiently small in (\ref{Str thm_quasi-bdy mon}), the proof of corollary \ref{Cor_Constr stny_Bdy density lbd} generalized direct to prove the density lower bound at every point in $spt(\bar{V}_i)$.\\
     \textbf{Proof of the claim}:
     First notice that $\exists\ r_2>0$ and $k>4$ such that, for every $0<r<r_2$, every $\cB_r(z)\subset \cB_{r_0}(y)$ and every $v\in C^1(\cB_{r}(z))$ satisfying \[
      \frac{1}{r}|v| + |\nabla_{\partial M} v| \leq \frac{1}{k}\ \ \ \text{ on }\cB_r(z),   \]
     we have 
     \begin{align}
      {\big|}\frac{e^{\Lambda r}}{\omega_n r^n}\|graph_{\partial M}(v)\|(\cB_r(z)) - 1 {\big|}\leq \epsilon  \label{Str thm_alm unit vol}     
     \end{align}
     where $\Lambda$ is in lemma \ref{Lem_Constr stny_Bdy Mon}.
     
     Now take $r_1:= min\{r_2, \rho(1/(40k^2)), 1/k\}/10$, where $\rho$ is specified in corollary \ref{Cor_Strc thm_Loc Graph}. For every $0 <r <r_1$ and every $\cB_r(x)\subset \cB_{r_0}(y)$, by our choice of $r_1$ and corollary \ref{Cor_Strc thm_Loc Graph}, $\forall 1\leq l\leq i$, either 
     \begin{align}
      \frac{1}{r}|u_l| < \frac{1}{2k},\ \ \ |\nabla_{\partial M} u_l| \leq \frac{1}{2k}\ \ \ \text{ on }\cB_{2r}(x)  \label{Str thm_Alm flat}
     \end{align}
     or
     \begin{align}
      u_l >0\ \ \ \text{ on }\cB_{2r}(x)  \label{Str thm_glob stny}
     \end{align}
     Note that if $u_l$ satisfies (\ref{Str thm_glob stny}) on $\cB_{2r}(x)$, then it satisfies (\ref{Str thm_glob stny}) on every $\cB_t(x)\subset \cB_{2r}(x)$. Also note that once $u_l>0$ on $\cB_{2r}(x)$, $|graph_{\partial M}(u_l)|$ is a stationary varifold in $\cB_{2r}(x)\times (-r_0, r_0)$ and hence $V_0 - |graph_{\partial M}(u_l)|$ is constrained stationary in $\cB_{2r}(x)\times (-r_0, r_0)$.
     
     Now that $\forall 0<\sigma<\eta<r_1$, $\forall \cB_{2\eta}(x)\subset \cB_{r_0}(y)$, let 
     \begin{enumerate}
      \item[] $I_1:=\{l: 1\leq l\leq i, u_l \text{ satisfies }(\ref{Str thm_Alm flat}) \text{ on }\cB_t(x) \text{ for every }t\in [\sigma, \eta] \}$
      \item[] $I_2:=\{l: 1\leq l\leq i, u_l \text{ satisfies }(\ref{Str thm_glob stny}) \text{ on }\cB_t(x) \text{ for every }t\in [\sigma, \eta] \}$
      \item[] $I_3:=\{1, 2, ..., i \}\setminus (I_1\cup I_2)$
     \end{enumerate}
     And for $l\in I_3$, let $t_l:=\inf\{t\in [\sigma, \eta]: (\ref{Str thm_glob stny})\text{ fails on }\cB_{2t}(x) \}$. After relabeling the index, WLOG $I_3=\{1, 2, ... q\}$ and $t_1\leq t_2\leq ... t_q$, $q\in \NN\cup\{0\}$.  Denote for simplicity, \[
      a(x, t; V):= \frac{e^{\Lambda t}}{\omega_n t^n}\|V\|(\BB_t^L(x)); \ \ \ 
      a_l(x, t):= a(x, t; |graph_{\partial M}(u_l)|)    \]
      and $\Gamma_l:= |graph_{\partial M}(u_l)|$. Then
     \begin{align*}
      a(x, \eta; \bar{V}_i) 
      = &\ a(x, \eta; V_0-\sum_{l\in I_2}\Gamma_l) - \sum_{l=1}^q a_l(x, \eta) - \sum_{k\in I_1} a_k(x, \eta) \\
      \geq &\ a(x, t_q; V_0-\sum_{l\in I_2}\Gamma_l) - \sum_{l=1}^q (a_l(x, t_q)+ 2\epsilon) - \sum_{k\in I_1} (a_k(x, \sigma)+2\epsilon) \\
      \geq &\ a(x, t_q; V_0-\sum_{l\in I_2}\Gamma_l - \Gamma_q) - \sum_{l=1}^{q-1} a_l(x, t_q) - \sum_{k\in I_1} a_k(x, \sigma) - 2\bar{m}\epsilon \\
      \geq &\ a(x, t_{q-1}; V_0-\sum_{l\in I_2}\Gamma_l - \Gamma_q) - \sum_{l=1}^{q-1} (a_l(x, t_{q-1}) + 2\epsilon) - \sum_{k\in I_1} a_k(x, \sigma) - 2\bar{m}\epsilon \\
      \geq &\ a(x, t_{q-1}; V_0-\sum_{l\in I_2}\Gamma_l - \Gamma_q-\Gamma_{q-1}) - \sum_{l=1}^{q-2} a_l(x, t_{q-1}) - \sum_{k\in I_1} a_k(x, \sigma) - 4\bar{m}\epsilon \\
      &\ ...\ ... \\
      \geq &\ a(x, t_1; V_0-\sum_{l\in I_2}\Gamma_l - \sum_{l=1}^q \Gamma_l) -0 - \sum_{k\in I_1} a_k(x, \sigma) - 2q\bar{m}\epsilon \\
      \geq &\ a(x, \sigma; V_0-\sum_{l\in I_2\cup I_3}\Gamma_l) -\sum_{k\in I_1} a_k(x, \sigma) - 2q\bar{m}\epsilon \\
      \geq &\ a(x, \sigma; \bar{V}_i) - 2\bar{m}^2\epsilon
     \end{align*}
     where the first inequality follows from (\ref{Str thm_alm unit vol}), lemma \ref{Lem_Constr stny_Bdy Mon} and that $V_0-\sum_{l\in I_2}\Gamma_l$ is constrained stationary in $\cB_{\eta}(x)\times (-r_0, r_0)$; the third inequality follows form (\ref{Str thm_alm unit vol}), lemma \ref{Lem_Constr stny_Bdy Mon} and that $V_0-\sum_{l\in I_2}\Gamma_l - \Gamma_q $ is stationary in $\cB_{2t_q}(x)\times (-r_0, r_0)$ and so on. This completes the proof of claim.\\
     
     (4) follows immediately from (3) and corollary \ref{Cor_Strc thm_Loc Graph}. 
    \end{proof}
    \begin{Rem}
     From the proof we can also see that $r_0 = r_0(M, U, \cK, \Lambda)$ can be chose to continuously depend on $(M, \partial M)$ in $C^2$ topology.
    \end{Rem}

   \section{Min-max with Obstacle} \label{Section_Minmax}
    \subsection{Almost minimizer and regularity} \label{Subsection_Alm minimizer}
     The notion of almost minimizing varifolds is introduced in \cite{Pitts81} to prove the regularity of stationary varifolds obtained from min-max construction. We employ the similar notions here. The only difference is that everything is supported in an ambient manifold with boundary, making the regularity result more subtle near the boundary. $M$ will be as in subsection \ref{Subsection_Notations}, $U\subset M$ be a relative open subset.
     \begin{Def}
      Call $\{T^{(l)}\}_{l=0}^q\subset \bfI_n(U)$ a \textbf{$\delta$-sequence} in $U$ start at $T^{(0)}$ if $\forall 1\leq l\leq q$,
      \begin{enumerate}
       \item[•] $spt(T^{(l)}-T^{(0)})\subset\subset U$
       \item[•] $\bfM_U(T^{(l)}) \leq \bfM_U(T^{(0)}) + \delta$
       \item[•] $\bfM_U(T^{(l)} - T^{(l-1)}) \leq \delta$
      \end{enumerate}
      Call $V\in \cV_n(M)$ \textbf{almost minimizing} in $U$, if there exist $\epsilon_i\to 0$, $\delta_i\to 0$, $T_i\in \bfI_n(M)$ with $|T_i|\to V$ such that for every $i$ and every $\delta_i$-sequence $\{T_i^{(l)}\}_{i=0}^{q}$ in $U$ start at $T_i$, we have \[
       \bfM_U(T_i^{(q)}) \geq \bfM_U(T_i) - \epsilon_i    \]
     \end{Def}
     Note that by definition, if $V$ is not almost minimizing in $U$, then $\exists\ \epsilon>0$ such that $\forall T\in \bfI_n(M)$ with $\bfF(|T|, V)< \epsilon$ and $\forall \delta>0$, there exists some $\delta$-sequence $\{T^{(l)}\}_{l=0}^q$ start at $T$ with \[
      \bfM_U(T^{(q)}) \leq \bfM_U(T) - \epsilon     \]
     
     \begin{Thm} \label{Thm_Minmax_Reg alm min}
      Let $V\in \cV_n(M)$ be  almost minimizing in $U$. Then $V$ is a constrained embedded stable minimal hypersurface in $U$ with optimal regularity.
     \end{Thm}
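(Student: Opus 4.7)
The plan is to adapt Pitts' replacement method to the obstacle setting, combining interior Schoen--Simon regularity with the obstacle problem regularity of Theorem \ref{Thm_Survey Obst_Reg for para} near $\partial M$, and then transporting the regularity back to $V$ via the structure Theorem \ref{Thm_Strc thm_Main} and Corollary \ref{Cor_Str thm_Cptness of c-stable min}.

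First I would extract constrained stationarity of $V$ from the almost-minimizing property. For $X \in \scX_c^+(U)$ the flow $e^{tX}$ sends $M$ into itself for small $t \geq 0$, so the chain $T_i^{(l)} := (e^{(l/N)X})_{\sharp} T_i$ forms a $\delta_i$-sequence in $U$ once $N$ is large. The almost-minimizing inequality gives $\bfM_U(T_i^{(N)}) \geq \bfM_U(T_i) - \epsilon_i$, and letting $N,i \to \infty$ yields $\delta V(X) \geq 0$. Lemma \ref{Lem_Constr stny_Loc bd 1st var} then upgrades this to locally bounded first variation, so Lemma \ref{Lem_Constr stny_Bdy Mon} and Corollary \ref{Cor_Constr stny_Tan at Bdy} become available.

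Next I would carry out replacements. For a small geodesic ball or boundary cylinder $\Omega \subset U$, minimize mass among integral currents supported in $M$ that agree with $T_i$ outside $\Omega$ to obtain $T_i^\ast$; the almost-minimizing inequality forces $\bfM_U(T_i^\ast) - \bfM_U(T_i) \to 0$. A subsequential varifold limit $V^\ast := \lim |T_i^\ast|$ then coincides with $V$ outside $\Omega$ and carries the same mass as $V$ in $\Omega$. When $\Omega \subset Int(U)$, classical Schoen--Simon interior regularity applied to $T_i^\ast$, together with an overlapping two-ball argument and unique continuation of stable minimal hypersurfaces, gives that $V \llcorner Int(U)$ is a stable minimal hypersurface with multiplicity. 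When $\Omega$ meets $\partial M$, Theorem \ref{Thm_Survey Obst_Reg for para} provides a $C^{1,1}$ decomposition of each $T_i^\ast$ as a sum of graphs over $\partial M$ with uniform estimates, and Corollary \ref{Cor_Str thm_Cptness of c-stable min} then delivers a constrained embedded stable minimal hypersurface $V^\ast$ of optimal regularity in $\Omega$.

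The main obstacle is identifying $V$ with $V^\ast$ throughout $\Omega$, since unique continuation can fail on contact sets where $V$ touches $\partial M$ with positive $\scH^n$-measure. To handle this I would apply Theorem \ref{Thm_Strc thm_Main} to both $V$ and $V^\ast$ in a slightly smaller subcylinder to obtain decompositions $V = V_+ + \sum_j |graph_{\partial M}(u_j)|$ and $V^\ast = V_+^\ast + \sum_j |graph_{\partial M}(u_j^\ast)|$ with $u_j \in C^1$, $u_j^\ast \in C^{1,1}$, and matching data on the side boundary. The interior parts $V_+$ and $V_+^\ast$ agree by unique continuation on the smooth shell where they meet. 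For the graph parts, on each connected component of $\{u_j > 0\} \cap \{u_j^\ast > 0\}$ both functions solve the minimal surface equation with identical boundary traces and so coincide; on the contact set the inequalities forced by the comparison principle pin down equality with $u_j^\ast$. Matching labels via Definition \ref{Def_Constr emb_hypersurface}(3) then yields $V = V^\ast$ locally, and covering $U$ by finitely many such $\Omega$ completes the proof.
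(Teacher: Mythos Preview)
Your outline tracks the paper through constrained stationarity, replacements, interior regularity, and the use of Theorem~\ref{Thm_Survey Obst_Reg for para} and Corollary~\ref{Cor_Str thm_Cptness of c-stable min} to make the replacement $V^\ast$ a constrained embedded stable minimal hypersurface. Two points, however, are genuine gaps.

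\textbf{Integrality of $V$ at $\partial M$.} You invoke Theorem~\ref{Thm_Strc thm_Main} on $V$ itself, but that theorem requires $V\in\cI\cV_n(M)$. Your argument only gives $V\llcorner Int(U)$ integral via Schoen--Simon; nothing in your sketch excludes $\|V\|$ charging $\partial M$ in a non-rectifiable or non-integral way. The paper inserts an explicit \emph{Rectifiability} step (replacements in annuli $A_{r_j,2r_j}(x)$ plus the density argument of Corollary~\ref{Cor_Constr stny_Bdy density lbd}) and an \emph{Integrality} step (blowing up $V$ and an annular replacement simultaneously and using Lemma~\ref{Lem_Constr stny_Maxm Principle} to identify the tangent cone as $m|\RR^n\times\{0\}|$) before the structure theorem can be applied. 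You need to restore these steps.

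\textbf{The identification $V=V^\ast$.} This is the more serious issue. Your claim that ``on the contact set the inequalities forced by the comparison principle pin down equality with $u_j^\ast$'' is not supported: $u_j$ and $u_j^\ast$ may have different contact sets, neither is a sub- or supersolution of the other on those sets, and ``matching side-boundary data'' does not determine a unique solution of the obstacle problem in the way it would for a Dirichlet problem. In fact $V=V^\ast$ need not hold on all of $\Omega$. The paper does \emph{not} attempt this identification. Instead, once $V$ is known to be $\sum_j|graph_{\partial M}(u_j)|$ with $u_j\in C^1$, it fixes a point $p=(y,t)\in graph_{\partial M}(u_i)\cap Int(M)$ and takes a replacement in $U_x\setminus Clos(B_{t/2}(p))$, i.e.\ in a region \emph{excluding} a ball around $p$. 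Then $V^\ast=V$ near $p$, so one of the $C^{1,1}$ graph sheets $u^\ast$ of $V^\ast$ agrees with $u_i$ on an open set inside $\{u_i>0\}$, and ordinary unique continuation for the minimal surface equation propagates $u^\ast=u_i$ along the entire connected component of $\{u_i>0\}$ containing $y$. This transfers the uniform $C^{1,1}$ bound of Theorem~\ref{Thm_Survey Obst_Reg for non-para} to $u_i$ on that component; iterating over all components and all $i$ yields $u_i\in C^{1,1}_{loc}$. The annular choice of replacement domain is precisely what creates the overlap seed that makes unique continuation usable, and is the idea your proposal is missing.
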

     \begin{proof}
      First see that by \cite{Pitts81}, $V\llcorner Int(U)$ is a stable minimal hypersurface with optimal regularity; moreover, a similar argument in \cite[3.3]{Pitts81} yields that $V$ is constrained stationary in $U$. Now we focus on the behavior of $V$ near $\partial M$. The proof of integrality of $V$ is almost the same as in \cite[Chapter 3]{Pitts81}, with aid of the lemma in subsection \ref{Subsection_Constr stny vfd}. We briefly go through the proof.\\

\noindent   \textbf{$\bullet$ Replacement}.  $\forall U'\subset\subset U$ be a subdomain. Since $V$ is almost minimizing in $U$, let $\epsilon_i\to 0$, $\delta_i\to 0$ and $|T_i|\to V$ be as in the definition. For each $i$, let $T_i^*$ be a mass minimizer among \[
       \scE(T_i, Clos(U')):= \{T^{(q)}: \exists\ \delta_i\text{-sequence }\{T^{(l)}\}_{l=0}^q \text{ start at }T_i \text{ such that }spt(T^{(l)}-T_i)\subset Clos(U')\}   \]
      The existence of such $T_i^*\in \scE(T_i, Clos(U'))$ follows from Federer-Fleming compactness \cite{FedererFleming60} and discretization-interpolation theorem \cite[13.1, 14.1]{MarquesNeves14}. 
      
      Moreover, $T_i^*$ is locally constrained minimizing in $U'$ and satisfies the mass bound \[
       \|T_i\|(U')-\epsilon_i \leq \|T_i^*\|(U') \leq \|T_i\|(U')     \] 
      Hence, by theorem \ref{Thm_Survey Obst_Reg for para}, $|T_i^*|\llcorner U'$ is a constrained embedded stable minimal hypersurface in $U'$. Then by corollary \ref{Cor_Str thm_Cptness of c-stable min}, when $i\to \infty$, $|T_i^*|\to V^*\in \cV_n(U)$ satisfying 
      \begin{enumerate}
       \item[(a)] $V^*$ is almost minimizing in $U$ and $V^*\llcorner U'$ is a constrained embedded stable minimal hypersurface;
       \item[(b)] $V^*\llcorner Clos(U')^c = V\llcorner Clos(U')^c$;
       \item[(c)] $\|V^*\|(U) = \|V\|(U)$
      \end{enumerate} 
      Those $V^*\in \cV_n(U)$ satisfying (a)-(c) above will be called \textbf{replacements} of $V$ in $U'$.\\
      
\noindent   \textbf{$\bullet$ Rectifiability}.  Thanks to Allard rectifiability theorem \cite[Chapter 8, 5.5]{Simon83_GMT} and lemma \ref{Lem_Constr stny_Loc bd 1st var}, it suffices to show that $\theta^n(x, \|V\|)>0$ $\|V\|$-a.e. $x\in \partial M \cap U$. $\forall x\in spt(V)\cap U\cap \partial M$, let $r_j\in (0, dist_M(x, M\setminus U)/2)$, $r_j\to 0$; $V_j^*$ be a replacement of $V$ in $ A_{r_j, 2r_j}(x)$. Then by lemma \ref{Lem_Constr stny_Bdy Mon}, $\|V_j^*\|(A_{5r_j/4, 7r_j/4})>0$ for $j>>1$. By the same trick as in corollary \ref{Cor_Constr stny_Bdy density lbd}, $\|V_j^*\|(A_{r_j, 2r_j}) \geq c(n, M)r_j^n$ for some $c(n, M)>0$. Hence, $\theta^n(x, \|V\|) \geq c'(n, M)>0$.\\

\noindent    \textbf{$\bullet$ Integrality}. For $V_j^*$ constructed above, by lemma \ref{Lem_Constr stny_Cptness} and \ref{Lem_Constr stny_Bdy Mon}, up to a subsequence, $(\eta_{x, r_j})_{\sharp} V \to P$, $(\eta_{x, r_j})_{\sharp} V_j^* \to Q$, where $P, Q  \in \cV_n(\RR^{n+1}_+)$ are constrained stationary along $\RR^n\times \{0\}$. ( We identify $(T^+_x M, T_x\partial M)$ with $(\RR^{n+1}_+, \RR^n\times \{0\})$ for simplicity. ) Moreover, by definition of replacements, $Q\llcorner \AAa^{n+1}_{1,2}(0)$ is constrained embedded stable minimal hypersurface, hence stable minimal hypersurface by lemma \ref{Lem_Constr stny_Maxm Principle}; And $P = Q$ outside $Clos(\AAa^{n+1}_{1,2}(0))$. By lemma \ref{Lem_Constr stny_Bdy Mon}, $P, Q$ are cones and hence $P = Q$ is a stable minimal cone with optimal regularity supported in $\RR^{n+1}_+$. By strong maximum principle, $P = Q = m|\RR^n\times \{0\}|$ for some $m\in \NN$. Therefore $\theta^n(x, \|V\|) = m \in \NN$. This proves $V\in \cI\cV_n(U)$.\\

      Now by theorem \ref{Thm_Strc thm_Main}, $V$ is locally multiple $C^1$ graphs near $\partial M$. To get $C^{1,1}$ regularity of the graphical function, we use again the replacements. In what follows, we keep working under Fermi coordinates.

      Let $x\in \partial M\cap U$, $r_0=r_0(M, U, \{x\}, \|V\|(U))>0$ be in theorem \ref{Thm_Strc thm_Main}; $U_x \subset C_{r_0}(x)$ be a neighborhood of $x$ such that $V\llcorner U_x = \sum_{i=1}^m |graph_{\partial M}(u_i)|$, where $u_1\geq u_2\geq ... \geq u_m \in C^1(U_x\cap \partial M, \RR_+)$, $|\nabla u_i|\leq 1/2$ and $\{u_1= 0\} \neq \emptyset$. For any $p = (y, t)\in graph_{\partial M}(u_i)\cap Int(M)$,  let $V^*$ be a replacement of $V$ in $U_x\setminus Clos(B_{t/2}(p))$. By theorem \ref{Thm_Strc thm_Main}, $V^*\llcorner U_x = |graph_{\partial M}(u^*)| + \hat{V}^*$ for some $\hat{V}^*\in\cI\cV_n(U_x)$ and $u^* = u_i$ on the connected component of $\{u_i>0\}$ containing $y$ by unique continuation; Since $V^*$ is a constrained embedded stable minimal hypersurface in $U_x$, by theorem \ref{Thm_Survey Obst_Reg for non-para}, $u^* \in C^{1,1}(\partial M\cap U)$ and for every $\cK\subset \partial M\cap U$ compact, \[
       \|u^*\|_{C^{1,1}(\cK)} \leq C(M, r_0, \cK)     \]
      By applying the replacement on every connected component of $spt(V)\cap Int(U_x)$, we see $u_i\in C^{1,1}_{loc}(\partial M\cap U_x)$. By the arbitrariness of $i$ and $x$ and second part of theorem \ref{Thm_Strc thm_Main}, $V$ is a constrained embedded stable minimal hypersurface in $U$.
     \end{proof}

    \subsection{Min-max construction} \label{Subsection_Minmax contruction}
     In this subsection, we describe the min-max construction with obstacle following the idea from \cite{Pitts81}. The notions coincide with \cite[section 7,8]{MarquesNeves14} for the cell complexes, subcomplex and homotopy relations. We also use the notions of relative homotopy classes in \cite{Zhou19_multi1}. 
    
     Suppose $m\geq 1$, $M$ be described in subsection \ref{Subsection_Notations}. Let $(X, Z)$ be a pair of cubical complex in $I^m$ with $Z\subset X$. Fix $\Phi_0: X \to (\cZ_n(M), \bfF)$ be a continuous map (i.e. continuous in $\bfF$-metric). Let $\Pi$ be the space of sequence of continuous maps $S = \{\Phi_i: X\to (\cZ_n(M), \bfF)\}_{i\geq 1}$ satisfying \[
     \sup \{\bfF(\Phi_i(x), \Phi_0(x)): x\in Z \} \to 0 \text{ as } i\to \infty   \]
     And define the width 
     \begin{align*}
      \bfL(S) & := \limsup_{i\to \infty} \sup_{x\in X} \bfM(\Phi_i(x)) \\ 
      \bfL(\Pi) & := \inf_{S\in \Pi} \bfL(S) \\
      \bfL_{\Phi_0, Z} & := \sup_{x\in Z} \bfM(\Phi_0(x))
     \end{align*}
     
     The goal of this section is to prove 
     \begin{Thm} \label{Thm_Minmax_Exist constr min}
      Let $(M, g)$ be a compact $n+1$ manifold with boundary, $\Phi_0: X\to (\cZ_n(M), \bfF)$ be a continuous map as above. If $\bfL(\Pi)>\bfL_{\Phi_0, Z}$, then there's a constrained embedded minimal hypersurface $V\in \cI\cV_n(M)$ with optimal regularity such that $\|V\|(M)= \bfL(\Pi)$.
     \end{Thm}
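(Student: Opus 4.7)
The plan is to follow the Almgren--Pitts min-max scheme, in the discrete-to-continuous formulation of Marques--Neves, adapted to the obstacle setting, and then invoke Theorem \ref{Thm_Minmax_Reg alm min} for the regularity at the very end. Concretely, after discretization we produce a varifold $V$ of mass $\bfL(\Pi)$ which is simultaneously (i) constrained stationary along $\partial M$ in $M$, and (ii) almost minimizing in every sufficiently small annulus around every point of $\mathrm{spt}\,V$. Both conditions feed directly into the machinery already built in Section \ref{Section_Str thm} and Subsection \ref{Subsection_Alm minimizer}.

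First I would replace the $\bfF$-continuous family $\Phi_i \in \Pi$ by a sequence of discrete Pitts maps $\phi_i\colon X(k_i)_0 \to \cZ_n(M)$ with fineness tending to zero and with $\limsup_i \max_x \bfM(\phi_i(x)) = \bfL(\Pi)$, using the standard discretization theorem (Marques--Neves \cite{MarquesNeves14}, Sections 13--14). The boundary condition is preserved because $\sup_Z \bfF(\Phi_i,\Phi_0) \to 0$ and $\bfL_{\Phi_0,Z} < \bfL(\Pi)$, so after throwing away a bounded neighbourhood of $Z$ on the discrete grid the critical values live in the interior of $X$. Next I would perform a pull-tight deformation on this discrete family: for each $V \in \cV_n(M)$ which is not constrained stationary, pick $X_V \in \scX_c^+(M)$ with $\delta V(X_V) < -c < 0$; the key point is that the flow $e^{tX_V}$ sends $M$ into itself since $X_V$ points inward on $\partial M$, so the pushed-forward cycles stay in $\cZ_n(M)$. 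By a partition-of-unity argument in $\cV_n(M)$ one then builds a continuous deformation on the no-concentration set, obtaining a new minimizing sequence whose \emph{critical set} \[\bfC(\{\phi_i\}) := \{V : V = \lim |\phi_{i_j}(x_{i_j})|,\ \lim \bfM(\phi_{i_j}(x_{i_j})) = \bfL(\Pi)\}\] consists entirely of varifolds that are constrained stationary along $\partial M$.

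The hard part, as usual, is producing an element of $\bfC(\{\phi_i\})$ which is almost minimizing in small annuli at every point. I would run the Pitts combinatorial argument: assume for contradiction that no $V \in \bfC(\{\phi_i\})$ is almost minimizing in any sufficiently small annulus about some point, then for every $V$ and every pair of nested concentric annuli there is an $\bfF$-neighborhood in which mass can be strictly decreased by a short $\delta$-sequence supported in one of the annuli. A covering/interpolation argument (Pitts \cite{Pitts81}, Chapter 4; cf.\ the more streamlined version in \cite{MarquesNeves14}) then combines these local deformations across the parameter space $X$ and produces a competitor $\widetilde{\phi}_i$ strictly improving the width, contradicting $\bfL(\Pi)$. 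Here the obstacle does not interfere with the combinatorics because all local competitors used in the deformations are obtained as mass-minimizers in classes $\scE(T,K)$ with $K \subset M$ (they never leave $M$), and the fact that such minimizers still exist and are integral follows from Federer--Fleming compactness in $\cZ_n(M)$.

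The output of this argument is a $V \in \cI\cV_n(M)$ with $\|V\|(M) = \bfL(\Pi)$ which is constrained stationary along $\partial M$ and almost minimizing in every sufficiently small annulus about every point of its support. By Theorem \ref{Thm_Minmax_Reg alm min} applied in such annuli covering $\mathrm{spt}\,V$, the varifold $V$ is a constrained embedded minimal hypersurface with optimal regularity, which is exactly the conclusion. The step I expect to be the subtlest is the pull-tight, because the usual argument needs continuity of the deformation with respect to $\bfF$-distance on $\cV_n(M)$, and one has to verify that restricting to inward-pointing $X_V$ does not destroy this continuity at points $V$ where $\mathrm{spt}\,V$ touches $\partial M$ non-transversally; the uniform local first-variation bound of Lemma \ref{Lem_Constr stny_Loc bd 1st var} is what makes this work.
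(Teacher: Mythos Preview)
Your proposal is correct and follows essentially the same Almgren--Pitts scheme as the paper: pull-tight using inward-pointing vector fields, discretize/interpolate via \cite{MarquesNeves14}, run Pitts' combinatorial deformation to produce $V$ almost minimizing in small annuli, and finish with Theorem \ref{Thm_Minmax_Reg alm min}. The only point you gloss over is that the annuli $A_{s,r_x}(x)$ cover only $B_{r_x}(x)\setminus\{x\}$, so Theorem \ref{Thm_Minmax_Reg alm min} gives the constrained-embedded structure on a punctured neighborhood of each $x$; the paper closes this by also invoking Theorem \ref{Thm_Survey Obst_Reg for non-para} (whose hypothesis deliberately allows an isolated singularity at the center) to extend the $C^{1,1}$ graph structure across boundary centers, while interior centers are handled by the classical Schoen--Simon removable singularity. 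A cosmetic difference is that the paper pull-tights the continuous family first and then discretizes, whereas you do it in the opposite order; both are standard.
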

     With aid of theorem \ref{Thm_Minmax_Reg alm min}, the proof is almost the same as in \cite[Chapter 4]{Pitts81}. We briefly go through the major part of the argument.\\

\noindent   \textbf{$\bullet$ Minimizing sequence and pull tight}. 
      By a diagonalization process, there exists some $S\in \Pi $ such that $\bfL(S) = \bfL(\Pi)$. Call such $S$ a \textbf{minimizing sequence}.
      
      For a minimizing sequence $S = \{\Phi\}_i\in \Pi$, denote \[
       \bfC(S):=\{V\in \cV_n(M): V_n = \lim_{j\to \infty} |\Phi_{i_j}(x_j)|, \ \|V\|(M)= \bfL(S) \}   \]
      called the \textbf{critical set} of $S$. Following \cite[4.3]{Pitts81} and \cite{ZhouZhu18}, we can construct $H: [0,1]\times \cZ_n(M)\cap \{\|T\|(M)\leq 2\bfL(\Pi)\} \to \cZ_n(M)\cap \{\|T\|(M)\leq 2\bfL(\Pi)\}$ continuous in $\bfF$-metric such that 
      \begin{enumerate}
      \item[(i)] $H(0, \text{-}) = id$;
      \item[(ii)] $H(t, T) = T$, $\forall t\in [0, 1]$, $\forall |T|\in \cV^{\infty}:=\{\text{constrained stationary varifolds in }M\}\cup \{|\Phi(x)|: x\in Z\}$;
      \item[(iii)] There's an $L: [0, +\infty)\to [0, +\infty)$ continuous, $L(0)=0$, $L(\tau)>0$ if $\tau>0$ such that \[
        \|H(1, T)\|(M) \leq \|T\|(M) - \bfL(\bfF(|T|, \cV^{\infty}))     \]
      \item[(iv)] $\forall \epsilon>0$, $\exists\ \delta>0$ such that if $\bfF(|T|, |\Phi_0(x)|)\leq \delta$ for some $x\in Z$, then $\bfF(H(t, T), |\Phi_0(x)|)\leq \epsilon$ for all $t\in [0, 1]$.
      \end{enumerate}
      Hence, if $S\in \Pi$ is a minimizing sequence, then $S':=\{H(1, \Phi_i)\}_{i\geq 1}\in \Pi$ is also a minimizing sequence  such that $\emptyset\neq \bfC(S')\subset \bfC(S)$ consists of constrained stationary varifolds in $M$. Call such $S'\in \Pi$ a \textbf{pulled tight sequence}.\\
      
\noindent   \textbf{$\bullet$ Discretization and interpolation}.
      Given a pulled-tight sequence $S=\{\Phi_i\}_{i\geq 1}\in \Pi$, by the interpolation theorem \cite[13.1]{MarquesNeves14} and diagonalization process, there exist $k_i\nearrow \infty$, $\delta_i\to 0$ and $\varphi_i: X(k_i) \to \cZ_n(M)$ such that 
      \begin{enumerate}
       \item[(i)] $\sup \{\bfF(\varphi_i(x), \Phi_i(x)): x\in X(k_i)_0\} \leq \delta_i$;
       \item[(ii)] $\sup \{\bfF(\varphi_i(x), \varphi_i(y)): x, y\in \alpha \cap X \text{ for some }\alpha\in I(m, k_i)_m\} \leq \delta_i$
       \item[(iii)] $\textbf{f}(\varphi_i) \to 0$ as $i\to \infty$;
       \item[(iv)] $\bfL(\{\varphi_i\}_i):= \limsup_{i\to \infty} \sup_{x\in X(k_i)_0} \|\varphi_i(x)\|(M) = \bfL(\Pi)$.
      \end{enumerate}
      In particular, \[
       \bfC(\{\varphi_i\}_{i\geq 1}):= \{V : V = \lim_{j\to \infty} |\varphi_{i_j}(x_j)| \text{ for some }i_j\to \infty \text{ and }x_j\in X(k_{i_j})_0;\ \|V\|(M) = \bfL(\Pi) \} \subset \bfC(S)  \]
       
      Conversely, given a sequence of $k_i\nearrow \infty$, $\delta_i\to 0$ and $\varphi_i : X(k_i)_0 \to \cZ_n(M)$ satisfying (ii), (iii) above and 
      \begin{enumerate}
       \item[(i')] $\sup \{\bfF(\varphi_i(x), \Phi_0(x)): x\in Z(k_i)_0\} \leq \delta_i$.
      \end{enumerate}
      By the interpolation theorem in \cite[section 6]{Almgren62}, (see also \cite[14.1]{MarquesNeves14}), there exists $S = \{\Phi_i\}_{i\geq 1} \in \Pi$ such that \[
       \bfL(S) = \bfL(\{\varphi_i\}_{i\geq 1})   \]
      
\noindent   \textbf{$\bullet$ Pitts' deformation}.
      Let $\{\varphi_i: X(k_i)_0 \to \cZ_n(M)\}_{i\geq 1}$ be satisfying (i'), (ii), (iii) above and such that \[
       \bfL(\{\varphi_i\}_{i\geq 1}) > \bfL_{\Phi_0, Z}, \ \ \ \bfC(\{\varphi_i\}_{i\geq 1}) \subset \{\text{constrained stationary varifolds in }M\}   \]
      Assuming $\forall V\in \bfC(\{\varphi_i\}_i)$, $\exists\ x\in M$ such that $\forall r>0$, $\exists s\in (0, r)$ with $V$ not almost minimizing in $A_{s,r}(x)$, \cite{Pitts81} introduced a deformation process to construct $\{\varphi'_i : X(k_i')_0\to \cZ_n(M)\}$ satisfying (i'), (ii), (iii) and \[
       \bfL(\{\varphi'_i\}_{i\geq 1}) < \bfL(\{\varphi_i\}_{i\geq 1})     \] 
      Applying this to the interpolation theorem above, one see that given a pulled-tight minimizing sequence $S\in \Pi$, $\exists\ V\in \bfC(S)$ such that $\forall x\in M$, $\exists r_x>0$ and $V$ being almost minimizing $A_{s, r}(x)$, $\forall s\in (0, r)$. By theorem \ref{Thm_Minmax_Reg alm min} and \ref{Thm_Survey Obst_Reg for non-para}, $V$ is a constrained embedded minimal hypersurface. This complete the proof of theorem \ref{Thm_Minmax_Exist constr min}.
      
      \begin{Rem}
       By Pitts' combinatorial argument \cite[4.9, 4.10]{Pitts81}, there's a constant $N(m)\in \NN$ such that in theorem \ref{Thm_Minmax_Exist constr min}, we can choose $V$ such that if $p\in M$ and $\{A_j:= A_{r_j, s_j}(p)\}_{j=1}^{N(m)}$ is a family of annuli in $\tilde{M}$ such that $s_j>r_j> 4s_{j+1}$ for every $j$, then $V$ is almost minimizing in at lease one of $A_j\cap M$. Hence in particular $V$ is a constrained embedded \textbf{stable} minimal hypersurface in that $A_j\cap M$.
      \end{Rem}

   \section{A local rigidity for constrained embedded minimal hypersurfaces} \label{Section_Loc rigid}
    We close this article by pointing out the following local rigidity theorem for constrained embedded minimal hypersurface. 
    Let $(N, g)$ be a closed $n+1$ manifold; $(\Sigma, \nu)\subset N$ be a connected, two-sided, closed smooth minimal hypersurface; $r_{\Sigma}$ be the injective radius of $\Sigma$ in $N$, i.e. \[
     r_{\Sigma}:= \sup\{r: \Phi: \Sigma\times (-r, r)\to N,\ (x, t)\mapsto \exp^N_x(t\nu(x)) \text{ is a diffeomorphism onto its image}\}     \]
    Work under Fermi coordinates in $\Phi(\Sigma\times (-r_{\Sigma}, r_{\Sigma}))$. For $K>1$ and $w^{\pm}\in C^\infty(\Sigma, (0, r_{\Sigma}))$, call the domain \[
     \scN_{w^{\pm}}:= \{(x, t)\in N: x\in \Sigma, -w^- \leq t\leq w^+\}   \]
    a \textbf{$K$-uniform} neighborhood of $\Sigma$, if
    \begin{enumerate}
     \item[(1)] $\sup_{x\in\Sigma, i\in \pm} w^i(x) \leq K\inf_{x\in\Sigma, i\in \pm} w^i(x)$;
     \item[(2)] The mean curvature $H_{\pm}$ of $graph_{\Sigma}(\pm w^{\pm})$ satisfies \[  |H_{\pm}|\leq K\sup_{\Sigma} w^{\pm}     \]
    \end{enumerate}
    Note that for a smooth minimal hypersurface as above, the $\delta$-neighborhood $\scN_{\delta}:= \{p\in N: dist_N(p, \Sigma)\leq \delta\}$ and the neighborhoods $\scN_{\delta w_1}:= \{(x, t): x\in \Sigma, |t|\leq \delta w_1(x)\}$ given by first eigenfunction $w_1$ of Jacobi operator of $\Sigma$ are both $K$-uniform neighborhood of $\Sigma$ for every sufficiently small $\delta$ and $K$ independent of $\delta$.
    
    \begin{Thm} \label{Thm_Loc rigid_Main thm}
     Let $\Sigma\subset N$ be a non-degenerate closed minimal hypersurface as above; $K>1$; $\epsilon\in (0,1)$. Then $\exists\ s_0 = s_0(K, \epsilon, \Sigma, N)>0$ such that if $\scN = \scN_{w^{\pm}}$ is a $K$-uniform neighborhood of $\Sigma$ and $\sup_{\Sigma} |w^{\pm}|\leq s_0$, and $V$ is a constrained embedded minimal hypersurface in $\scN$ with \[
      \scH^n(\Sigma)\leq \|V\|(N) \leq (2-\epsilon)\scH^n(\Sigma)     \]
     then $V = |\Sigma|$. 
    \end{Thm}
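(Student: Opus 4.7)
The plan is to argue by contradiction: suppose there exist $s_i \searrow 0$, $K$-uniform neighborhoods $\scN_i = \scN_{w_i^\pm}$ with $\sup_\Sigma|w_i^\pm| \leq s_i$, and constrained embedded minimal hypersurfaces $V_i \neq |\Sigma|$ in $\scN_i$ with $\scH^n(\Sigma) \leq \|V_i\|(N) \leq (2-\epsilon)\scH^n(\Sigma)$. By the constrained-stationary compactness Lemmas \ref{Lem_Constr stny_Cptness} and \ref{Lem_Constr stny_Loc bd 1st var}, after passing to a subsequence $V_i \to V_\infty$ as varifolds with $\mathrm{spt}(V_\infty) \subset \Sigma$; the constancy theorem plus the mass bounds force $V_\infty = |\Sigma|$. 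Allard regularity together with Theorem \ref{Thm_Strc thm_Main} and the $C^{1,1}$ estimate of Theorem \ref{Thm_Survey Obst_Reg for non-para} then let us write $V_i = |\mathrm{graph}_\Sigma(u_i)|$ for a single $C^{1,1}$ function $u_i$ with $-w_i^- \leq u_i \leq w_i^+$ and $\|u_i\|_{C^1(\Sigma)} \to 0$.

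The constrained stationarity of $V_i$ translates into: $u_i$ solves the minimal surface equation $\mathcal{H}(u_i) = 0$ on the free set $\{-w_i^- < u_i < w_i^+\}$, equals $\mathcal{H}(\pm w_i^\pm)$ on the two contact sets, and obeys the one-sided sign conditions $\mathcal{H}(w_i^+) \leq 0$ on $\{u_i = w_i^+\}$ and $\mathcal{H}(-w_i^-) \geq 0$ on $\{u_i = -w_i^-\}$. In particular $u_i \cdot \mathcal{H}(u_i) \leq 0$ a.e.\ on $\Sigma$, and $K$-uniformity gives $\|\mathcal{H}(u_i)\|_\infty \leq K s_i$. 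Writing $\mathcal{H}(u) = J_\Sigma u + N(u)$ with $N$ a quadratic remainder in $(u,\nabla u,\nabla^2 u)$, Calder\'on--Zygmund applied to $J_\Sigma u_i = \mathcal{H}(u_i) - N(u_i)$ together with an absorption argument for small $s_i$ yields $\|u_i\|_{W^{2,p}(\Sigma)} \leq C(p)\, s_i$ and hence $\|u_i\|_{C^{1,\alpha}(\Sigma)} \leq C s_i$.

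Set $\lambda_i := \|u_i\|_\infty > 0$ and $v_i := u_i/\lambda_i$, so $\|v_i\|_\infty = 1$. If $s_i/\lambda_i \to \infty$ along a subsequence, the $K$-uniform pinching $\inf w_i^\pm \geq \sup w_i^\pm/K$ makes the contact sets of $u_i$ empty for large $i$, so $u_i$ is a smooth free minimal graph; bootstrapping gives $\|u_i\|_{C^{2,\alpha}} \leq C\lambda_i$ and a $C^{2,\alpha}$ limit $v_\infty$ with $J_\Sigma v_\infty = 0$ and $\|v_\infty\|_\infty = 1$, contradicting non-degeneracy. Otherwise $s_i/\lambda_i \leq M$; then the rescaled obstacles $\tilde w_i^\pm := w_i^\pm/\lambda_i$ are bounded in $W^{2,p}$, so after a subsequence $\tilde w_i^\pm \to \tilde w_\infty^\pm$ and $v_i \to v_\infty$ in $C^{1,\alpha}$ with $\|v_\infty\|_\infty = 1$. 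Dividing the constrained variational inequality $A'(u_i)(\varphi - u_i) \geq 0$ by $\lambda_i^2$ and letting $i \to \infty$ yields the linearized inequality $\int_\Sigma J_\Sigma v_\infty (\psi - v_\infty)\,dA \leq 0$ for every admissible $\psi \in \{-\tilde w_\infty^- \leq \cdot \leq \tilde w_\infty^+\}$, while the pointwise bound $u_i \mathcal{H}(u_i) \leq 0$ combined with $\|N(u_i)/\lambda_i\|_{L^p} = O(s_i)$ passes to $v_\infty \cdot J_\Sigma v_\infty \leq 0$ a.e.\ on $\Sigma$.

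Testing the limit variational inequality with the admissible $\psi = 0$ gives $\int_\Sigma v_\infty J_\Sigma v_\infty\,dA \geq 0$; combined with the a.e.\ pointwise bound, $v_\infty J_\Sigma v_\infty \equiv 0$ a.e.\ on $\Sigma$. Since $v_\infty \neq 0$ on the contact set (where $|v_\infty| = \tilde w_\infty^\pm > 0$) and $J_\Sigma v_\infty = 0$ already in the interior, we conclude $J_\Sigma v_\infty \equiv 0$ on $\Sigma$, and non-degeneracy forces $v_\infty \equiv 0$, contradicting $\|v_\infty\|_\infty = 1$. I expect the main obstacle to be passing the nonlinear constrained problem to its linearized limit cleanly: ensuring that the sharp scaling $\|u_i\|_{W^{2,p}} = O(s_i)$ kills the nonlinear remainder after dividing by $\lambda_i^2$, that the one-sided $K$-uniform sign conditions on $\mathcal{H}(\pm w_i^\pm)$ survive the rescaling, and that the contact sets $\{u_i = \pm w_i^\pm\}$ behave well enough under blow-up to yield the pointwise inequality $v_\infty J_\Sigma v_\infty \leq 0$ in the limit.
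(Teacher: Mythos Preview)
Your overall strategy --- contradiction, compactness to $|\Sigma|$, graphical representation, rescaling by $\lambda_i = \|u_i\|_{C^0}$, and producing a nontrivial Jacobi field --- matches the paper's. The case split on $s_i/\lambda_i$ is also essentially the paper's observation that the contact set must be nonempty (else a direct Jacobi-field contradiction), which by $K$-uniformity forces $\lambda_i \sim s_i$.

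The genuine gap is in your final step, and it is precisely where you fail to use the hypothesis $\|V_i\|(N) \geq \scH^n(\Sigma)$. The constrained variational inequality $A'(u_i)(\varphi - u_i) \geq 0$ for admissible $\varphi$ linearizes, after dividing by $\lambda_i^2$, to
\[
  \int_\Sigma J_\Sigma v_\infty \cdot (\psi - v_\infty)\, dA \ \geq\ 0,
\]
not $\leq 0$; testing $\psi = 0$ then gives $\int_\Sigma v_\infty\, J_\Sigma v_\infty\, dA \leq 0$, the \emph{same} sign as your pointwise bound $v_\infty\, J_\Sigma v_\infty \leq 0$. This is not an accident: both inequalities encode the same first-order information (constrained stationarity). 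Any admissible $\psi$ has $\psi - v_\infty$ pointing inward on the contact set, so the integrated variational inequality is exactly the integrated form of the pointwise one; no choice of $\psi$ can reverse the sign.

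The paper supplies the missing opposite inequality from a genuinely second-order source: the mass lower bound. One writes
\[
  0 \leq \|V_i\|(N) - \scH^n(\Sigma) = \int_\Sigma \big(F(x,u_i,\nabla u_i) - 1\big)\, dx,
\]
Taylor-expands $F$ to second order (the first-order terms vanish since $\Sigma$ is minimal), divides by $\lambda_i^2$, and passes to the limit to obtain $\int_\Sigma v_\infty\, J_\Sigma v_\infty\, dA \geq 0$. Combined with the pointwise $v_\infty\, J_\Sigma v_\infty \leq 0$, this forces $J_\Sigma v_\infty = 0$ and the contradiction. Your argument is repaired by inserting this second-variation computation in place of the $\psi = 0$ test.
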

    We emphasis that having the same mass is crucial for such a theorem to be true. In fact, for an unstable minimal hypersurface $\Sigma$, let $w^+ = w^- = \delta w_1$, where $w_1$ be the first eigenfunction of Jacobi operator, $\delta<<1$, then each component of $\partial \scN_{w^{\pm}}$ is constrained embedded minimal in $\scN_{w^{\pm}}$.
    
    \begin{proof}
     We prove by contradiction. First recall some basic facts on minimal surface equation on $\Sigma$.
     Let $F=F(x, z, p)$, $x\in \Sigma$, $z\in (-r_{\Sigma}, r_{\Sigma}), p\in T_x\Sigma$ be the area integrand for graph over $\Sigma$, i.e. \[
      \scH^n(graph_{\Sigma}(\phi)) = \int_{\Sigma} F(x, \phi(x), \nabla\phi(x))\ dx\ \ \ \ \forall \phi\in C^1(\Sigma, (-r_{\Sigma}, r_{\Sigma}))    \]
     Here are some basic propositions we shall use later,
     \begin{enumerate}
      \item[(a)] $F(x, 0 ,0) = 1$; $F_z(x, 0, 0):=\partial_z F(x, 0, 0) = 0$,  $F_p := \partial_p F(x, 0, 0) = 0$; \\
      $F_{pp}(x, 0, 0)^{ij} = \delta^{ij}$, $F_{zp}(x, 0, 0 )=F_{pz}(x, 0, 0) = 0$, $F_{zz}(x, 0, 0) = - |A_{\Sigma}|^2-Ric_N(\nu_{\Sigma}, \nu_{\Sigma})$.
      \item[(b)] $\|F\|_{C^3}\leq C(\Sigma, N)$.
      \item[(c)] If further $\phi\in C^1\cap W^{2,2}(\Sigma)$, then \[
        \scM \phi := -div F_p(x, \phi, \nabla\phi) + F_z(x, \phi, \nabla \phi)   = \frac{1}{\sqrt{1+|\nabla \phi|^2}}H \]
       where $H$ is the mean curvature of $graph_{\Sigma}(\phi)$ with normal field having positive inner product with $\nu$. 
      \item[(d)] $L_{\Sigma}\phi:=\frac{d}{ds}\big|_{s=0} \scM(s\phi) = -(\Delta +|A_{\Sigma}|^2 +Ric_N(\nu, \nu))\phi$ \ be the Jacobi operator of $\Sigma$.
     \end{enumerate}
     
     Suppose otherwise, $K>1$ and there are $K$-uniform neighborhood $\scN_j = \scN_{w_j^{\pm}}$ of $\Sigma$ with $\sup |w_j^{\pm}|\to 0$ and $|\Sigma|\neq |\Gamma_j|\in \cI\cV_n(\scN_j)$ constrained embedded minimal hypersurfaces in $\scN_j$ with $\|\Gamma_j\|(N) \in [\|\Sigma\|(N), (2-\epsilon)\|\Sigma(N)\|]$. The strategy is to show that these $\{\Gamma_j\}$ induces a nontrivial Jacobi field on $\Sigma$, which contradicts to the non-degeneracy assumption.
     
     Recall by Remark \ref{Rem_Constr emb_Bd mean curv} (1), $\Gamma_j$ has mean curvature uniformly bounded by the mean curvature of $\partial \scN_j$, hence tends to $0$ in $L^{\infty}$. By Allard compactness theorem \cite{Allard72, Simon83_GMT}, up to a subsequence, $\Gamma_j\to \Gamma_{\infty}$ for some stationary integral varifold $\Gamma_{\infty}\in \cI\cV_n(N)$ supported in $\Sigma$ and having mass between $\|\Sigma\|(N)$ and $(2-\epsilon)\|\Sigma\|(N)$. Hence $\Gamma_{\infty} = |\Sigma|$, and by Allard regularity \cite{Allard72, Simon83_GMT}, for $j>>1$, there exist $u_j\in C^{1, \alpha}\cap W^{2,2} (\Sigma)$ such that $\Gamma_j = |graph_{\Sigma}(u_j)|$, $u_j\to 0$ in $C^{\alpha}$, not identically $0$ and 
     \begin{align}
     \begin{split}
      \scM u_j &= 0 \ \ \ \text{ on }\{-w_j^- < u_j < w_j^+\} \\
      u_j\cdot\scM u_j &\leq 0 \ \ \ \text{ on }\Sigma \\
      |\scM u_j| &\leq \sup_{i\in \pm, x\in \Sigma} Kw_j^i(x)\ \ \ \text{ on }\Sigma
     \end{split} \label{Loc rigid_Constr min surf equ}
     \end{align}
     where the last two inequality follows from Remark \ref{Rem_Constr emb_Bd mean curv} (1). Note that $\{u_j = \pm w^{\pm}_j\}\neq \emptyset$, otherwise by standard elliptic estimate, $u_j/\|u_j\|_{L^2}$ will $C^{\infty}$-converge to some nontrivial Jacobi field on $\Sigma$. Thus, 
     \begin{align}
      \inf w_j^{\pm}\leq \|u_j\|_{C^0(\Sigma)}\leq \sup w_j^{\pm} \label{Loc rigid_Two side C^0 bd}
     \end{align}
      
     Also  by standard elliptic estimate, \[
      \|u_j\|_{C^{1,\alpha}} + \|u_j\|_{W^{2,2}}\leq C(\Sigma, N)(\|u_j\|_{C^0} + |\scM u_j|_{C^0}) \leq C(\Sigma, N, K)\|u_j\|_{C^0}    \]
     Hence, let $c_j:=\|u_j\|_{C^0}$, up to a subsequence, $\hat{u}_j:= u_j/c_j \to \hat{u}_{\infty} \in C^{1,\alpha}\cap W^{2,2}(\Sigma)$ in $C^1$. Moreover, $\|\hat{u}_{\infty}\|_{C^0}=1$ and by (\ref{Loc rigid_Constr min surf equ}), (\ref{Loc rigid_Two side C^0 bd}) and definition of $K$-uniform domain, 
     \begin{align}
     \begin{split}
      L_{\Sigma} \hat{u}_{\infty} &= 0\ \ \ \text{ on }\{|\hat{u}_{\infty}|<1/K \}  \\
      \hat{u}_{\infty}\cdot L_{\Sigma} \hat{u}_{\infty} &\leq 0\ \ \ \text{ on }\Sigma 
     \end{split} \label{Loc rigid_Jac inequ globally}
     \end{align}
     
     Now we make use of the constraint on mass to show that $\int_{\Sigma} \hat{u}_{\infty}\cdot L_{\Sigma}\hat{u}_{\infty} \geq 0$. Combining this with (\ref{Loc rigid_Jac inequ globally}), we see that $\hat{u}_{\infty}$ is a nontrivial Jacobi field on $\Sigma$, which contradicts to the non-degeneracy.
     
     Observe that 
     \begin{align}
     \begin{split}
      0 & \leq \|\Gamma_j\|(N)-\|\Sigma\|(N) 
       = \int_{\Sigma} F(x, u_j, \nabla u_j)-1 \ dx \\
      & = \int_{\Sigma}(\int_0^1 F_p(x, su_j, s\nabla u_j)\ ds)\cdot \nabla u_j + (\int_0^1 F_z(x, su_j, s\nabla u_j))\cdot u_j\ dx \\
      & =\ \frac{1}{2}\int_{\Sigma} F_p(x, u_j, \nabla u_j)\cdot \nabla u_j + F_z(x, u_j, \nabla u_j)u_j\ dx \ +\  \scR
     \end{split}  \label{Loc rigid_Mass diff}
     \end{align}
     where by (a),(b) and standard elliptic estimate, $|\scR|\leq C(\Sigma, N, K)c_j^3$. Here we use the fact that \[
      \int_0^1 f(s)\ ds - \frac{1}{2}(f(0)+ f(1)) = \frac{1}{2}\int_0^1 f''(s)(s^2-s)\ ds     \]
     Multiply (\ref{Loc rigid_Mass diff}) by $c_j^{-2}$ and let $j\to \infty$ one get, \[
      \int_{\Sigma} \hat{u}_\infty\cdot L_{\Sigma} \hat{u}_\infty  =\int_{\Sigma} |\nabla \hat{u}_\infty|^2 - (|A_{\Sigma}|^2 + Ric_N(\nu, \nu))\hat{u}_{\infty}^2\ dx \geq 0     \]
     which completes the proof.
    \end{proof}

    \begin{Rem}
      An interesting question is when the constrained embedded minimal hypersurface obtained by min-max construction in theorem \ref{Thm_Minmax_Exist constr min} is a minimal hypersurface (i.e. has vanishing mean curvature). When $\partial M$ is minimal, this is clearly true by strong maximum principle; In general, is this true when one assume further that the width is invariant under local perturbation of $\partial M$ in $\tilde{M}$? Is this true under assumption above for generically perturbed boundary? 
    \end{Rem}

\appendix

   \section{Improved regularity for obstacle problems}
    The goal of this section is to prove theorem \ref{Thm_Survey Obst_Reg for non-para} and \ref{Thm_Survey Obst_Reg for para}. First recall that under local coordinate chart $(\cU, x^i)$ of $\partial M$, if $u\in C^1(\cU, (-r_M/2, r_M/2))$, then $\scH^n(graph_{\partial M}(u))= \int_{\cU} F(x, u, du)$, where $0< F(x, z, p)\in C^{\infty}(\cU\times(-r_M, r_M)\times \RR^n)$ satisfies 
    \begin{align}
     \partial^2_{pp}F(x, z, p)^{ij}\xi_i\xi_j & \geq \lambda(x,z, p) |\xi|^2 >0\ \ \ \ \forall \xi\in\RR^n \\
     \|F\|_{C^4(\cU\times [-r_M/2, r_M/2]\times \BB^n_R)} & \leq C(M, g, R)
    \end{align}
    If $u\geq 0$ and $graph_{\partial M}(u)$ is constrained stationary in $M$, then 
    \begin{align}
     \begin{split}
      0\leq & \frac{d}{dt}\Big|_{t=0}\int_{\cU} F(x, u+t\phi, du+td\phi) \ dx \\
      \leq & \int_{\cU} \partial_p F(x,u, du)\cdot d\phi + \partial_z F(x, u, du)\phi\ dx
     \end{split} \label{Appendix_Constr stny}
    \end{align}
    For every $\phi \in C_c^1(\cU)$ such that for $0<t<<1$, $u+t\phi \geq 0$.\\
    
    We shall deal with more general quasi-linear 2nd order elliptic operator of divergence form. For simplicity, write $B_r(x):= \BB^n_r(x)\subset \RR^n$, $B_r := B_r(0)$ and $\Omega_r:= B_r\times [-1, 1]\times B_1$. Let $\lambda, \Lambda>0$, $\alpha\in (0, 1)$ be constant,  $A = A(x, z, p)\in C^3(B_2\times \RR\times \RR^n, \RR^n)$, $B=B(x, z, p)\in C^2(B_2\times \RR\times \RR^n)$ satisfying 
    \begin{align}
    \begin{split}
      \partial_p A^{ij}|_{B_2\times [-1, 1]\times B_1} \cdot \xi_i\xi_j & \geq  \lambda|\xi|^2\ \ \ \ \forall \xi\in \RR^n \\
      \|A\|_{C^3(\Omega_2)} + \|B\|_{C^2(\Omega_2)} & \leq \Lambda
    \end{split} \label{Appendix_Ell coeff}
    \end{align}
    Let $\scM u := -div(A(x, u, \nabla u)) + B(x, u, \nabla u)$. Consider the variational inequality for a function $u\in C^1(B_2, \RR_+)$ 
    \begin{align}
    \begin{split}
     \int_{B_2} A(x, u, \nabla u)\cdot \nabla \phi + B(x, u, \nabla u)\phi\ dx & \geq 0\ \ \ \forall \phi\in C_c^1(B_2, \RR_+) \\
     \int_{B_2} A(x, u, \nabla u)\cdot \nabla \phi + B(x, u, \nabla u)\phi\ dx & = 0\ \ \ \forall \phi\in C_c^1(\{u>0\})
    \end{split} \label{Appendix_Var ineq of 2nd ell}
    \end{align}
    Clearly, (\ref{Appendix_Constr stny}) implies (\ref{Appendix_Var ineq of 2nd ell}), where $A = \partial_p F$ and $B = \partial_z F$.
    
    \begin{Thm} \label{Thm_Appendix_Reg obst for 2nd ell equ}
     $\exists \delta_0 = \delta_0(\lambda, \Lambda, n, \alpha)\in (0, 1)$ such that if $A, B$ satisfies (\ref{Appendix_Ell coeff}), $0\leq u\in C^{1,\alpha}_{loc}(B_2\setminus \{0\})\cap C^1(B_2)$ satisfies (\ref{Appendix_Var ineq of 2nd ell}) and that $\|u\|_{C^1(B_2)}\leq \delta_0$, then $u\in C^{1,1}_{loc}(B_2)$ and \[
      \|u\|_{C^{1,1}(B_1)} \leq C(\lambda, \Lambda, n, \alpha)     \]
    \end{Thm}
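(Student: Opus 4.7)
The plan is to combine a uniform quadratic growth estimate at points of the coincidence set $\{u=0\}$ with a scale-invariant interior estimate on the open non-contact set $\{u>0\}$. Both rely on the smallness $\|u\|_{C^1}\leq\delta_0$, which makes $\scM$ a controlled perturbation of a uniformly elliptic linear operator with $C^2$-coefficients.

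First I would unpack (\ref{Appendix_Var ineq of 2nd ell}): testing against arbitrary nonnegative $\phi\in C_c^1(B_2)$ yields $\scM u\geq 0$ distributionally on $B_2$, while the equality on $\{u>0\}$ gives $\scM u=0$ there in the weak sense. The hypothesis $u\in C^{1,\alpha}_{loc}(B_2\setminus\{0\})$ together with quasi-linear Schauder upgrades $u$ to a classical $C^{2,\alpha}_{loc}$ solution on $\{u>0\}\setminus\{0\}$; if moreover $u(0)>0$, the same holds near $0$ via DeGiorgi-Nash-Moser followed by bootstrapping. Note also that $\nabla u$ vanishes at every free-boundary point, since $u\geq 0$ attains its minimum there.

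The second step is the quadratic growth: for every $x_0\in\overline{B_1}$ with $u(x_0)=0$,
\[
\sup_{B_\rho(x_0)} u \leq K\rho^2 \qquad \text{for all } 0<\rho\leq r_0,
\]
with $K,r_0$ depending only on $\lambda,\Lambda,n$. I use the barrier $\phi(x):=K|x-x_0|^2$. Since $D^2\phi=2KI$ and $|\nabla\phi|\leq 2Kr_0$ on $B_{r_0}(x_0)$, (\ref{Appendix_Ell coeff}) yields
\[
\scM\phi \leq -2Kn\lambda + \Lambda(1+2Kr_0) \leq -1
\]
on $B_{r_0}(x_0)$, provided $K\geq K_0(\lambda,\Lambda,n)$ and $r_0\leq r_0(\lambda,\Lambda,n)$. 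On $\{u>0\}\cap B_{r_0}(x_0)$ both $u$ and $\phi$ are classical, so the mean-value theorem applied to $\scM\phi-\scM u$ produces a linear uniformly elliptic operator $L$ with bounded coefficients satisfying $L(\phi-u)\leq -1$. The strong minimum principle then forces the infimum of $\phi-u$ to be attained on the boundary of $\{u>0\}\cap B_{r_0}(x_0)$: on $\partial\{u>0\}$ we have $u=0\leq\phi$, and on $\partial B_{r_0}(x_0)$ we have $\phi=Kr_0^2\geq\delta_0\geq u$ whenever $K\geq\delta_0/r_0^2$. Combined with $u=0\leq\phi$ on $\{u=0\}\cap B_{r_0}(x_0)$, this gives $u\leq\phi$ on $B_{r_0}(x_0)$.

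The third step converts this into the $C^{1,1}$ estimate. For $x\in B_1$: if $u(x)=0$, then $\nabla u(x)=0$ and the quadratic growth gives $|u(y)|\leq K|y-x|^2$ near $x$, the pointwise $C^{1,1}$ bound. If $u(x)>0$, set $d:=\min\{r_0/4,\mathrm{dist}(x,\{u=0\})\}>0$; then $B_d(x)\subset\{u>0\}$, $\scM u=0$ classically on $B_d(x)$, and quadratic growth at the nearest free-boundary point gives $\sup_{B_d(x)} u\leq 4Kd^2$. The rescaling $\tilde u(z):=u(x+(d/2)z)/d^2$ satisfies a uniformly elliptic quasi-linear equation on $B_1$ with $\|\tilde u\|_{C^0}\leq 16K$, and standard interior $C^{2,\alpha}$ theory (e.g.\ Gilbarg--Trudinger, Ch.\ 13) gives $|D^2\tilde u(0)|\leq C$, hence $|D^2 u(x)|\leq C$. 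The two cases combine to yield $\|u\|_{C^{1,1}(B_1)}\leq C(\lambda,\Lambda,n,\alpha)$.

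The main obstacle I foresee is the comparison across the possibly irregular set $\partial\{u>0\}$, since a priori no regularity of this set is available. The standard remedy is to run comparison on the slightly shrunk super-level sets $\{u>\epsilon\}\cap B_{r_0}(x_0)$ for a.e.\ $\epsilon>0$ (where $\partial\{u>\epsilon\}$ is $C^{2,\alpha}$ by Sard and $u$ is classical), use the weaker boundary estimate $\phi-u\geq -\epsilon$, and pass $\epsilon\to 0^+$ to recover $\phi\geq u$. The quasi-linear nature of $\scM$ does not cause additional difficulty because $\|u\|_{C^1}\leq\delta_0$ keeps everything in the perturbative regime of the linear obstacle theory of Caffarelli and Kinderlehrer.
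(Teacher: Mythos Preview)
Your overall architecture---quadratic growth at contact points, then interior Schauder on $\{u>0\}$ at the scale of the distance to the free boundary---is the same as the paper's, and your Step~3 is essentially identical to Step~2 of the paper's Lemma~\ref{Lem_Appendix_Reg obst 2nd ell}. The genuine difference is in how the quadratic growth is obtained.

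The paper does not use a barrier. It first rescales to reduce to the auxiliary situation where the lower-order data $\partial_xA,\partial_zA,B$ are uniformly small (Lemma~\ref{Lem_Appendix_Reg obst 2nd ell}), and in that regime it runs a Caffarelli--Kinderlehrer / Lin style argument: solve $\scM v=0$ in $B_r(x_0)$ with $v=u$ on $\partial B_r$, solve $\scM\bar v=0$ with zero boundary data, and combine the comparison $0\le u-v\le C\|v^-\|_{C^0}$, the estimate $\|\bar v\|_{C^0}\le Cr^2\|B(\cdot,0,0)\|_{C^0}$, and the Harnack inequality for $v-\bar v\ge 0$ to get $0\le u\le Cr^2$ on $B_{r/2}(x_0)$. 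The smallness of the lower-order terms is what makes the Harnack constant and the comparison constants uniform in $r$; this is why the paper needs the rescaling reduction and an existence lemma (Lemma~\ref{Lem_Appendix_Exist uniq with low bdy reg}) for the replacement $v$. After obtaining $u\in C^{1,1}_{loc}(B_2\setminus\{0\})$ this way, the paper deduces the pointwise identity $\scM u=B(x,0,0)\chi_{\{u=0\}}$, applies a uniform $C^{1,\beta}$ estimate across the origin, and runs Lemma~\ref{Lem_Appendix_Reg obst 2nd ell} once more to close.

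Your barrier $\phi=K|x-x_0|^2$ shortcuts all of this: no auxiliary PDE, no Harnack, no two-pass structure. The place where you pay is the comparison step. Without a sign condition on $\partial_z B$, the inequality $\scM\phi\le -1<0=\scM u$ together with $\phi\ge u$ on $\partial(\{u>0\}\cap B_{r_0})$ does not by itself force $\phi\ge u$ inside; the linearized operator $L$ you produce has a zeroth-order coefficient of uncontrolled sign. What rescues you is that $r_0$ is small (so the principal eigenvalue of $L$ on any subdomain of $B_{r_0}$ is large) and $|\phi-u|\le Kr_0^2+\delta_0$ is small, so the zeroth-order term cannot defeat the strict sign $\le -1$. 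You should make this explicit rather than invoking ``the strong minimum principle'' directly; functionally this plays the same role as the paper's smallness hypothesis~(\ref{Appendix_Small low order coeff}), just packaged as a small-domain condition instead of a small-coefficient condition. With that caveat your argument is correct and somewhat more elementary than the paper's.
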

    In view of the proof of lemma \ref{Lem_Strc thm_tilt est}, theorem \ref{Thm_Survey Obst_Reg for non-para} follows from theorem \ref{Thm_Appendix_Reg obst for 2nd ell equ}.
    
    The major effort is made to prove the following lemma,
    \begin{Lem} \label{Lem_Appendix_Reg obst 2nd ell}
     There exists $\delta_0 = \delta_0(\lambda, \Lambda, n, \alpha)\in (0, 1)$, $\eta_0 = \eta_0(\lambda, \Lambda, n, \alpha)\in (0, \delta_0)$ s.t. 
     if $u$ be in theorem \ref{Thm_Appendix_Reg obst for 2nd ell equ} and further satisfying 
     \begin{align}
      \|u\|_{C^{1, \alpha}(B_2)}  \leq 2\delta_0 \label{Appendix_Small C^1,a sol}
     \end{align}
     and $\eta<\eta_0$ such that the elliptic coefficients satisfies
     \begin{align}
      \|\partial_x A\|_{C^2(\Omega_2)} +\|\partial_z A\|_{C^2(\Omega_2)} + \|B\|_{C^2(\Omega_2)}  \leq \eta \label{Appendix_Small low order coeff}
     \end{align}
     Then $\|u\|_{C^{1,1}(B_1)} \leq C(\lambda, \Lambda, n, \alpha)(\|u\|_{C^{1,\alpha}(B_2)} + \eta) $.
    \end{Lem}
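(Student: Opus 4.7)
The plan is to treat (\ref{Appendix_Var ineq of 2nd ell}) as a small perturbation of the classical linear constant-coefficient obstacle problem, and deduce the $C^{1,1}$-bound from the classical Caffarelli--Frehse regularity via a blow-up/compactness argument. Set $a_0^{ij} := \partial_{p_j} A^i(0,0,0)$ and $\scL_0 v := -a_0^{ij}\partial_{ij}v$, uniformly elliptic with ellipticity $\lambda$ by (\ref{Appendix_Ell coeff}). Taylor-expanding $A$ and $B$ around $(0,0,0)$ and using (\ref{Appendix_Small C^1,a sol}) and (\ref{Appendix_Small low order coeff}) rewrites the variational inequality as
\begin{align*}
\scL_0 u = O(\eta + \delta_0\|u\|_{C^{1,\alpha}}) \text{ on } \{u>0\}, \qquad \scL_0 u \geq -O(\eta + \delta_0\|u\|_{C^{1,\alpha}}) \text{ in } B_2,
\end{align*}
so $u$ is an approximate solution of the classical linear obstacle problem for $\scL_0$ with zero obstacle.

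The central step is the quadratic growth estimate at free boundary points: there exist $C^\sharp, r^\sharp$ depending only on $\lambda,\Lambda,n,\alpha$ such that for every $x_0 \in B_{3/2}\cap\{u=0\}$ and every $r\in (0,r^\sharp)$,
\begin{align*}
\sup_{B_r(x_0)} u \leq C^\sharp\bigl(\|u\|_{C^{1,\alpha}(B_2)} + \eta\bigr)\,r^2.
\end{align*}
I argue by contradiction. Suppose it fails along a sequence $A_k, B_k, u_k$ satisfying the hypotheses with $\eta_k\to 0$ and $\tau_k := \|u_k\|_{C^{1,\alpha}(B_2)}\leq 2\delta_0$, together with contact points $x_k\in\{u_k=0\}$ and radii $r_k>0$ such that $\sigma_k := \sup_{B_{r_k}(x_k)} u_k$ satisfies $M_k := \sigma_k/((\tau_k+\eta_k)r_k^2)\to\infty$. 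The elementary Hölder estimate $\sigma_k \leq \tau_k r_k^{1+\alpha}/(1+\alpha)$ (coming from $u_k(x_k)=|\nabla u_k(x_k)|=0$) combined with $\tau_k\leq 2\delta_0$ forces $r_k\to 0$. Rescale $\tilde u_k(y) := u_k(x_k+r_k y)/\sigma_k$, so that $\tilde u_k\geq 0$, $\tilde u_k(0)=0$, $\sup_{B_1}\tilde u_k=1$, and a direct scaling computation turns (\ref{Appendix_Var ineq of 2nd ell}) into a rescaled variational inequality whose leading elliptic operator is $\scL_0$ and whose forcing is bounded in $L^\infty$ by $(r_k^2/\sigma_k)(\tau_k+\eta_k) = 1/M_k \to 0$. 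Apply De Giorgi--Nash--Moser and Lieberman-type quasilinear Schauder estimates on the non-contact set $\{\tilde u_k > 0\}$, where the rescaled equation is near-linear, to obtain uniform $C^{1,\beta}_{loc}$ bounds on that set; extract a $C^1_{loc}$-subsequential limit $\tilde u_\infty$, which by the upper semicontinuity of the contact set under uniform convergence and the stability of solutions to the linear obstacle problem satisfies $\scL_0\tilde u_\infty\geq 0$, $\scL_0\tilde u_\infty=0$ on $\{\tilde u_\infty>0\}$, $\tilde u_\infty\geq 0$, $\tilde u_\infty(0)=0$, and $\sup_{B_1}\tilde u_\infty=1$. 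Classical $C^{1,1}$-regularity for this linear obstacle problem yields $\|\tilde u_\infty\|_{C^{1,1}(B_{1/2})} \leq C_0(\lambda,\Lambda,n)$, and a chain-covering argument through contact points (needed because the $C^{1,1}$-bound controls a ball smaller than the one where the sup is normalized) extends this to a uniform $L^\infty$ bound on $B_1$ incompatible with the quantitative divergence encoded by $M_k\to\infty$ at the original scale, contradicting the assumed failure.

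For $x \in B_1$ with $d := \mathrm{dist}(x,\{u=0\}) > 0$, $u$ solves $\scM u = 0$ in $B_d(x)$, and standard nonlinear Schauder estimates (using (\ref{Appendix_Ell coeff}), (\ref{Appendix_Small C^1,a sol})) give $|\nabla^2 u(x)| \leq C d^{-2}\sup_{B_{d/2}(x)} u$. Choosing $x_0 \in \partial\{u=0\}$ with $|x-x_0|=d$ and applying the quadratic growth at $x_0$ yields $\sup_{B_{d/2}(x)} u \leq \tfrac{9}{4}C^\sharp(\tau+\eta) d^2$, hence $|\nabla^2 u(x)| \leq C(\lambda,\Lambda,n,\alpha)(\tau+\eta)$ uniformly; the same bound is trivial on $\{u=0\}$ itself. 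This establishes the lemma. The main obstacle throughout is the rescaling in the blow-up step: since the $C^{1,\alpha}$ a priori estimate only gives $|\nabla u_k| = O(|x-x_k|^\alpha)$ near the contact point, the natural $r^2$-normalization does not preserve gradient bounds, which is why we normalize by $\sigma_k$ instead; this choice restores compactness but shifts the contradiction from a direct $L^\infty$ divergence to a scale mismatch that must be recovered by the chain-covering argument just described.
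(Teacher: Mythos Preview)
Your final paragraph (quadratic growth at contact points $\Rightarrow$ $C^{1,1}$ via interior estimates on $\{u>0\}$) matches the paper. The gap is in the blow-up proof of quadratic growth. With $\tilde u_k(y)=u_k(x_k+r_ky)/\sigma_k$ you have $0\le\tilde u_k\le 1$ on $B_1$, but the only gradient control is $|\nabla\tilde u_k(y)|\le(\tau_k r_k^{1+\alpha}/\sigma_k)\,|y|^\alpha$ coming from the $C^{1,\alpha}$ hypothesis and $\nabla u_k(x_k)=0$. Your own inequality $\sigma_k\le C\tau_k r_k^{1+\alpha}$ shows the prefactor is bounded \emph{below}, not above: writing $\sigma_k=M_k(\tau_k+\eta_k)r_k^2$, it is of order $r_k^{\alpha-1}/M_k$, and nothing forbids $M_k\to\infty$ with $r_k^{\alpha-1}/M_k\to\infty$ simultaneously (e.g.\ $M_k\sim r_k^{(\alpha-1)/2}$). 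So the rescaled gradients may blow up and no $C^1_{loc}$ subsequence exists; your Schauder estimates on $\{\tilde u_k>0\}$ degenerate at the free boundary and do not help. The standard repair is a doubling selection of $r_k$ ensuring $\sup_{B_R}\tilde u_k\le CR^2$ for all $R\ge 1$, which you do not carry out. Separately, even granting a limit, your contradiction is misrouted: the limiting obstacle problem has zero forcing, so $\tilde u_\infty\ge 0$ is $\scL_0$-superharmonic with $\tilde u_\infty(0)=0$, hence $\tilde u_\infty\equiv 0$ by the strong minimum principle, contradicting $\sup_{B_1}\tilde u_\infty=1$ directly; invoking $C^{1,1}$ regularity of the limit and a ``chain-covering'' produces nothing.

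The paper's argument is direct and avoids blow-up. For $x_0\in\{u=0\}\cap B_{3/2}$ and $r<1/2$, solve (via Lemma~\ref{Lem_Appendix_Exist uniq with low bdy reg}) $\scM v=0$ in $B_r(x_0)$ with $v=u$ on $\partial B_r$, and $\scM\bar v=0$ with $\bar v=0$ on $\partial B_r$. Comparison gives $u\ge v\ge\bar v$ and $0\le u-v\le C\|v^-\|_{C^0}\le C\|\bar v\|_{C^0}\le C\eta r^2$; Harnack for $v-\bar v\ge 0$ together with $v(x_0)\le u(x_0)=0$ gives $\sup_{B_{r/2}}(v-\bar v)\le C|\bar v(x_0)|\le C\eta r^2$. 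Summing, $0\le u\le C\eta r^2$ on $B_{r/2}(x_0)$, which is the quadratic bound.
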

    \begin{proof}[Proof of theorem \ref{Thm_Appendix_Reg obst for 2nd ell equ}]
     Observe that if $x_0\in B_1$, $r\in (0, 1)$, let $\tilde{u}(y):= u(x_0 + ry)/r$, then \[
      \scM u (x) = \frac{1}{r} [-div \tilde{A}(y, \tilde{u}(y), \nabla \tilde{u}(y)) + \tilde{B}(y, \tilde{u}(y), \nabla \tilde{u}(y))]_{y=(x-x_0)/r}     \]
     where 
     \begin{align*}
      \tilde{A}(y, z, p)& := A(x_0+ry, rz, p) \\
      \tilde{B}(y, z, p)& := rB(x_0+ry, rz, p)
     \end{align*}
     Hence, by lemma \ref{Lem_Appendix_Reg obst 2nd ell}, for $u$ satisfying the assumptions in theorem \ref{Thm_Appendix_Reg obst for 2nd ell equ}, $u\in C^{1,1}_{loc}(B_2\setminus \{0\})$. Thus, for a.e. $x\in B_2\setminus \{0\}$, $\scM u (x) = B(x, 0, 0)$. Together with (\ref{Appendix_Var ineq of 2nd ell}) and that $\|u\|_{C^1(B_2)} \leq 1$, by a cutting off argument, $\scM u (x) = B(x, 0, 0)\cdot\chi_{\{u=0\}}$ in the weak sense on $B_2$. Hence by standard elliptic estimate \cite[theorem 13.1]{GilbargTrudinger01}, $\exists \beta=\beta(\lambda, \Lambda, n)\in (0 ,1)$ such that $\|u\|_{C^{1, \beta}(B_{3/2})}\leq C(\lambda, \Lambda, n)$. Then repeat the rescaling argument above, by lemma \ref{Lem_Appendix_Reg obst 2nd ell}, theorem \ref{Thm_Appendix_Reg obst for 2nd ell equ} is proved.
    \end{proof}
    
    Now we start to prove lemma \ref{Lem_Appendix_Reg obst 2nd ell}. 
    \begin{Lem} \label{Lem_Appendix_Exist uniq with low bdy reg}
     $\exists \eta_1 = \eta_1(\lambda, \Lambda, n, \alpha)\in (0,1)$ such that if $\eta\leq \eta_1$ and $A, B$ satisfies (\ref{Appendix_Ell coeff}) and (\ref{Appendix_Small low order coeff}), $\|\phi\|_{C^{1, \alpha}(\partial B_1)}\leq \eta_1$. Then the equation
     \begin{align}
     \begin{cases}
      \scM u = 0\ & \text{ in }B_1 \\
      u = \phi\ & \text{ on }\partial B_1
     \end{cases} \label{Appendix_Ell equ with low bdy reg}
     \end{align}
     has a unique solution $u\in C^{1,\alpha}(Clos(B_1))$ satisfying 
     \begin{align}
      \|u\|_{C^{1, \alpha}}\leq C(\lambda, \Lambda, n, \alpha)(\|\phi\|_{C^{1, \alpha}}+ \eta) \label{Appendix_C^1,alpha est} 
     \end{align}
    \end{Lem}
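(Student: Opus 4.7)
My plan is to apply Banach's contraction mapping theorem in a small $C^{1,\alpha}(Clos(B_1))$-ball, reducing the quasilinear Dirichlet problem to iterated linear problems. The key underlying ingredient is a boundary $C^{1,\alpha}$-estimate for linear elliptic equations with low-regularity data: for $\tilde{a}^{ij}\in C^\alpha(Clos(B_1))$ uniformly elliptic with ellipticity constant $\ge\lambda/2$ and $C^\alpha$-norm $\le 2\Lambda$, $\tilde{f}\in L^\infty(B_1)$, and $\psi\in C^{1,\alpha}(\partial B_1)$, the linear Dirichlet problem $-\tilde{a}^{ij}\partial_{ij}w=\tilde{f}$ in $B_1$, $w|_{\partial B_1}=\psi$, has a unique solution in $C^{1,\alpha}(Clos(B_1))$ with $\|w\|_{C^{1,\alpha}}\le C_*(\lambda,\Lambda,n,\alpha)(\|\tilde{f}\|_{L^\infty}+\|\psi\|_{C^{1,\alpha}(\partial B_1)})$. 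I will obtain this by first extending $\psi$ harmonically into $B_1$---the extension being in $C^{1,\alpha}(Clos(B_1))$ by classical boundary regularity for harmonic functions on the smooth ball via the Poisson kernel---and then solving for the zero-Dirichlet difference using $W^{2,p}$-theory combined with Morrey's embedding $W^{2,p}\hookrightarrow C^{1,\alpha}$ for $p>n/(1-\alpha)$.

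With this linear tool in hand, set $R:=4C_*(\eta+\|\phi\|_{C^{1,\alpha}(\partial B_1)})$ and consider the closed convex set $S_R:=\{v\in C^{1,\alpha}(Clos(B_1)):v|_{\partial B_1}=\phi,\ \|v\|_{C^{1,\alpha}}\le R\}$. For $v\in S_R$, define $T(v):=u$ as the unique $C^{1,\alpha}$-solution of the linear Dirichlet problem obtained by freezing the top-order coefficient of $\scM$ at $(x,v,\nabla v)$ and moving the remaining terms to the right-hand side:
\begin{equation*}
-a^{ij}_v(x)\,\partial_{ij}u \;=\; g_v(x)\ \text{in}\ B_1,\qquad u|_{\partial B_1}=\phi,
\end{equation*}
where $a^{ij}_v(x):=(\partial_{p_j}A^i)(x,v,\nabla v)$ and $g_v:=(\partial_{x_i}A^i)(x,v,\nabla v)+(\partial_zA^i)(x,v,\nabla v)\,\partial_iv-B(x,v,\nabla v)$. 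Taking $\eta_1$ small enough so that $R\le\delta_0$, the frozen coefficient $a^{ij}_v$ satisfies the hypotheses above by (\ref{Appendix_Ell coeff}), while (\ref{Appendix_Small low order coeff}) yields $\|g_v\|_{L^\infty}\le C\eta(1+\|v\|_{C^{1,\alpha}})\le C\eta(1+R)$. The linear estimate then gives $\|T(v)\|_{C^{1,\alpha}}\le C_*(C\eta(1+R)+\|\phi\|_{C^{1,\alpha}})\le R$ once $\eta_1$ is small enough, so $T$ maps $S_R$ into itself.

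For the contraction step, given $v_1,v_2\in S_R$ and $u_i:=T(v_i)$, the difference $u_1-u_2$ has zero boundary data and solves a linear equation whose right-hand side splits as $(g_{v_1}-g_{v_2})+(a^{ij}_{v_1}-a^{ij}_{v_2})\partial_{ij}u_2$; the first summand is bounded in $L^\infty$ by $C\eta\|v_1-v_2\|_{C^1}$ thanks to (\ref{Appendix_Small low order coeff}) and the $C^3$-control in (\ref{Appendix_Ell coeff}), while the second lies in $L^p$ with norm $\le C\|v_1-v_2\|_{C^0}\|u_2\|_{W^{2,p}}$, where $\|u_2\|_{W^{2,p}}$ is controlled uniformly by the $W^{2,p}$-version of Step 1 applied to $u_2$. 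Combining yields $\|T(v_1)-T(v_2)\|_{C^{1,\alpha}}\le C_1(\eta+R)\|v_1-v_2\|_{C^{1,\alpha}}$; further shrinking $\eta_1$ makes this a strict contraction, and Banach's theorem produces a unique fixed point $u\in S_R$ solving (\ref{Appendix_Ell equ with low bdy reg}) and obeying (\ref{Appendix_C^1,alpha est}). Uniqueness within the full class $C^{1,\alpha}(Clos(B_1))$ is automatic, since any solution $u$ solves the linear problem for $v=u$, so the same estimate forces $\|u\|_{C^{1,\alpha}}(1-CC_*\eta)\le C_*(\eta+\|\phi\|_{C^{1,\alpha}})$, placing $u\in S_R$. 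The main obstacle is coupling the low-regularity boundary data with the second-derivative control needed in the contraction step: since $\phi$ is only $C^{1,\alpha}$, solutions are not $C^{2,\alpha}$ up to the boundary, and the contraction must be tracked through $W^{2,p}$ estimates paired with $C^\alpha$ coefficients via the Morrey embedding from Step 1, rather than via pure Schauder theory.
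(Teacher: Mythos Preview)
Your fixed-point strategy matches the paper's in spirit---the paper also reduces to a linear estimate plus ``a fixed point argument''---but your choice of function space prevents the argument from closing. The linear $C^{1,\alpha}$ estimate you state is correct, yet your derivation via harmonic extension plus global $W^{2,p}$ fails: if $h$ is the harmonic extension of $\psi\in C^{1,\alpha}(\partial B_1)$ and $d(x)$ denotes distance to $\partial B_1$, then in general $|D^2h(x)|$ behaves like $d(x)^{\alpha-1}$ near the boundary, so $\tilde a^{ij}\partial_{ij}h\in L^p(B_1)$ only for $p<1/(1-\alpha)$, whereas the Morrey embedding $W^{2,p}\hookrightarrow C^{1,\alpha}$ you invoke requires $p>n/(1-\alpha)$. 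The same obstruction breaks your contraction step: you appeal to $\|u_2\|_{W^{2,p}}$ for such large $p$, but $u_2=T(v_2)$ has only $C^{1,\alpha}$ boundary data $\phi$, so its second derivatives likewise blow up like $d^{\alpha-1}$ and $u_2\notin W^{2,p}(B_1)$ for $p\geq 1/(1-\alpha)$. Hence the term $(a^{ij}_{v_1}-a^{ij}_{v_2})\partial_{ij}u_2$ cannot be placed in the required $L^p$, and the contraction does not close in $C^{1,\alpha}$.

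The paper resolves exactly this difficulty by working in the Gilbarg--H\"ormander weighted H\"older space $H^{(b)}_a$ with $a=2+\alpha$, $b=1+\alpha$ (see \cite{GilbargHormander80}), whose norm incorporates the weight $d^{1-\alpha}$ on second derivatives and thus accommodates the blow-up above. The linear Schauder estimate in that scale is \cite[Theorem~5.1]{GilbargHormander80}; the fixed point then closes in $H^{(1+\alpha)}_{2+\alpha}$, and the $C^{1,\alpha}$ bound (\ref{Appendix_C^1,alpha est}) follows from the embedding \cite[Lemma~2.1]{GilbargHormander80}. Uniqueness is obtained separately from the comparison principle \cite[Chapter~10]{GilbargTrudinger01}, not from the contraction.
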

     The uniqueness follows directly from comparison principle, see \cite[Chapter 10]{GilbargTrudinger01};
     We assert that the existence result does not follow directly from the classical Schauder estimate, since the latter requires boundary value to be $C^{2,\alpha}$. Instead, one need to work in the space $H_a^{(b)}$ introduced \cite{GilbargHormander80}, where $a = 2+\alpha$, $b = 1+\alpha$. By \cite[theorem 5.1]{GilbargHormander80} and a fixed point argument, there exists $u\in H^{(b)}_a(B_1)$ satisfying (\ref{Appendix_Ell equ with low bdy reg}) with uniformly bounded $H^{(b)}_a$-norm; and (\ref{Appendix_C^1,alpha est}) follows from \cite[lemma 2.1]{GilbargHormander80}.
     
     \begin{proof}[Proof of lemma \ref{Lem_Appendix_Reg obst 2nd ell}]
      Let $\delta_0, \eta_0\in (0, \eta_1)$ TBD; $u$ be in lemma \ref{Lem_Appendix_Reg obst 2nd ell}.\\
      
\noindent      \textbf{Step 1}
      $\forall x_0\in B_{3/2}\cap\{u=0\}$, $\forall r\in (0, 1/2)$, let $v=v_{x_0, r}\in C^{1,\alpha}(B_r(x_0))$ be the solution of 
      \begin{align*}
      \begin{cases}
       \scM v = 0\ & \text{ in }B_r(x_0) \\
       v = u\ & \text{ on }\partial B_r(x_0)
      \end{cases}
      \end{align*}
      given by lemma \ref{Lem_Appendix_Exist uniq with low bdy reg}. Also, let $\bar{v}$ be the solution of 
      \begin{align*}
      \begin{cases}
       \scM \bar{v} = 0\ & \text{ in }B_r(x_0) \\
       \bar{v} = 0\ & \text{ on }\partial B_r(x_0)       
      \end{cases}
      \end{align*}
      Also note that $\|u\|_{C^1(B_r)}, \|v\|_{C^1(B_r)}, \|\bar{v}\|_{C^1(B_r)}\leq C(\lambda, \Lambda, n, \alpha)(\eta_0 + \delta_0)$ and $\scM u \geq 0$ in the weak sense. Hence, by taking $\eta_0, \delta_0 << 1$ and using the maximum principle \cite[theorem 10.10]{GilbargTrudinger01}, 
      \begin{align}
       0\leq u(x)- v(x) \leq C(\lambda, \Lambda, n, \alpha)\sup_{\partial \{u>0\}} u- v \leq C(\lambda, \Lambda, n, \alpha)\|v^-\|_{C^0(B_r(x_0))} \label{Appendix_Cprsn u-v}
      \end{align}
      for every $x\in B_r(x_0)$; And $\bar{v}\leq v$ on $B_r(x_0)$. Applying the classical $C^0$ estimate on $\tilde{v}(y):=\bar{v}(x_0+ry)/r$, one get
      \begin{align}
       \|\bar{v}\|_{C^0(B_r(x_0))} \leq C(\lambda, \Lambda, n, \alpha)r^2\|B(\cdot, 0, 0)\|_{C^0(B_r)} \label{Appendix_C^0 est bar{v}}
      \end{align}
      Since $v-\bar{v}\geq 0$ on $B_r(x_0)$, $\scM v = \scM \bar{v} = 0$ and $v(x_0)\leq u(x_0)=0$, by (\ref{Appendix_Ell coeff}) and Harnack inequality, 
      \begin{align}
       \|v-\bar{v}\|_{C^0(B_{r/2}(x_0))} \leq C(\lambda, \Lambda, n, \alpha)(v(x_0) - \bar{v}(x_0)) \leq C(\lambda, \Lambda, n, \alpha)|\bar{v}(x_0)| \label{Appendix_Harnack ineq v-bar{v}}
      \end{align}
      Combining (\ref{Appendix_Cprsn u-v}), (\ref{Appendix_C^0 est bar{v}}) and (\ref{Appendix_Harnack ineq v-bar{v}}) one derive
      \begin{align}
       0\leq u \leq C(\lambda, \Lambda, n, \alpha)\|B(\cdot, 0, 0)\|_{C^0(B_r(x_0))}r^2\ \ \ \text{ on }B_{r/2}(x_0)  \label{Appendix_C^0 est on u}
      \end{align}
      
\noindent   \textbf{Step 2}
      $\forall x_0\in \{u=0\}$, $\nabla u(x_0) = 0$; $\forall x_0\in \{u>0\}\cap B_{3/2}$, if $l:=dist(x_0, \{u=0\})<1/8$, let $y_0\in \{u=0\}$ such that $|x_0 - y_0| = l$. Then by (\ref{Appendix_C^0 est on u}), \[
       \|u\|_{C^0(B_l(x_0))}\leq \|u\|_{C^0(B_{2l}(y_0))}\leq C(\lambda, \Lambda, n, \alpha)l^2     \] 
      Hence by interior gradient estimate, 
      \begin{align}
       \frac{1}{l}\|\nabla u\|_{C^0(B_{l/2}(x_0))} + \|\nabla^2 u\|_{C^0(B_{l/2}(x_0))} \leq C(\lambda, \Lambda, n, \alpha) \label{Appendix_grad est on u}
      \end{align}
      Now that, $\forall x_1, x_2\in B_1$, $\rho:=|x_1-x_2|<1/20$, $l_i:=dist(x_i, \{u=0\})$, $l_1\leq l_2$. 
      \begin{enumerate}
      \item[(i)] If $l_2 \geq 1/8$, then directly by gradient estimate \[
        |\nabla u(x_1)- \nabla u(x_2)|\leq \rho \sup_{B_{\rho}(x_2)}\|\nabla^2 u\| \leq C(\lambda, \Lambda, n, \alpha)|x_1-x_2|    \]
      \item[(2)] If $2\rho\leq l_2< 1/8$, then by (\ref{Appendix_grad est on u}), \[
        |\nabla u(x_1)- \nabla u(x_2)|\leq \rho \sup_{B_{l_2/2}(x_2)}\|\nabla^2 u\| \leq C(\lambda, \Lambda, n, \alpha)|x_1-x_2|    \]
      \item[(3)] If $l_2\leq 2\rho$, then also by (\ref{Appendix_grad est on u}), \[
        |\nabla u(x_1)-\nabla u(x_2)| \leq |\nabla u(x_1)|+|\nabla u(x_2)|\leq C(\lambda, \Lambda, n, \alpha)(l_1+l_2)\leq C(\lambda, \Lambda, n, \alpha)|x_1-x_2|    \]
      \end{enumerate}
      This proves the $C^{1, 1}$ bound of $u$.
     \end{proof}
     
     \begin{proof}[Proof of theorem \ref{Thm_Survey Obst_Reg for para}]
      The regularity results in literature focus on minimizing boundaries; for general closed integral current, the strategy is to first decompose them into sum of boundary of Caccioppoli sets. One subtlety is that a priori the graphs may have incompatible orientations. We rule out this by choosing $U_y$ small enough.
      
      Recall that $M^{n+1}\subset \RR^L$ is a compact submanifold with boundary, extended to a closed manifold $\tilde{M}^{n+1}\subset \RR^L$; $\tilde{U}\subset \tilde{M}$ is relatively open and $U:=\tilde{U}\cap M$. Suppose $U\cap \partial M \neq \emptyset$. $T\in \cZ_n(U)$ minimize mass among $\{S\in \cZ_n(U): spt(T-S)\subset \subset U\}$. Call such $T$ \textbf{constrained minimizing} in $U$.
      
      WLOG $U, \tilde{U}, V:=\tilde{U}\setminus U$ are contractible and $\bfM_U(T)>0$. By decomposition theorem \cite[Chap 6, 3.14]{Simon83_GMT}, there's a decreasing sequence of $\scH^{n+1}$-measurable subsets $\{U_j\subset \tilde{U}\}_{j\in \ZZ}$ such that \[
       T = \sum_{j\in\ZZ} \partial [U_j];\ \ \ \ \|T\|= \sum_{j\in \ZZ} \|\partial [U_j]\|     \]
      In particular, $\bfM_W(T) = \sum_{j\in \ZZ}\bfM_W(\partial [U_j])$ for all $W\subset \subset \tilde{U}$, $spt(\partial [U_j])\subset U$ and $T_j := \partial [U_j]$ are also constrained minimizers in $U$ (along $\partial M$). Thus for each $j$, either $spt[U_j]$ or $spt([\tilde{U}]- U_j])$ contains $V$. WLOG, $spt(\partial [U_0])\cap \partial M \neq \emptyset$ and $spt[U_0]\supset V$. Hence $\forall j\leq 0$, $spt[U_j]\supset V$; and $\forall j>0$, either $spt[U_j]\supset V$ or $spt[U_j]\subset spt[U_0]\setminus V$.\\
      
\noindent   \textbf{Claim:} $\forall \cK\subset U\cap \partial M$ compact, $\exists\ r_1 = r_1(M, U, \cK)>0$ such that if $T_0 = \partial [U_0]$, $T_1 = \partial [U_1]$ are constrained minimizers in $U$ and $U_0\subset V$, $U_1\subset U_0\setminus V$; $y\in \cK\cap spt(T_0)$. Then $U_1\cap B_{r_1}(y)=\emptyset$. 
      
      Clearly, with this claim, for each $j\in \ZZ$, $T_j\llcorner B_{r_1}(y) = \partial [U_j']\llcorner B_{r_1}(y)$ is constrained minimizing in $B_{r_1}(y)$, where $U_j' := U_j\cap B_{r_1}(y)$ is either empty or containing $V$. Hence, by \cite{Miranda71} and \cite{Tamanini82}, $T_j\llcorner B_{r_1}(y)$ is $C^{1,\alpha}$ for some $\alpha\in (0, 1)$. Then by \ref{Thm_Survey Obst_Reg for non-para}, $T$ is $C^{1, 1}$ multiple graphs restricting to a smaller subset $U_y$. This proves (1) of theorem \ref{Thm_Survey Obst_Reg for para}; and (2) follows from unique continuation of minimal hypersurfaces.\\
      
\noindent      \textbf{Proof of the claim:} By \cite{Miranda71}, for every $\delta\in (0,1)$, $\exists\ r_2 = r_2(M, U, \cK, \delta)>0$ and $u_0\in C^1(U\cap \partial M)$ such that 
      \begin{align}
       T_0\llcorner B_{r_2}(y) = [graph_{\partial M}(u)]\llcorner B_{r_2}(y) \ \text{ with }\  |\nabla u|\leq \delta  \label{Appendix_Miranda '71, small grad}
      \end{align}
      Assuming the claim fails, then $\exists\ y_j\in \cK$, $T_i^{(j)} = \partial [U_i^{(j)}]$ being constrained minimizing boundary in $U$, where $i=0,1$, $j\geq 1$, $U_0^{(j)}\subset V$, $U_1^{(j)}\subset U_0^{(j)}\setminus V$ and $U_1^{(j)}\cap B_{1/j}(y_j)\neq \emptyset$. Then $spt(T_1^{(j)})\cap B_{1/j}(y_j)\neq \emptyset$.

      Let $j\to \infty$, $y_j\to y_{\infty}\in \cK$, and consider $(\eta_{y_j, 1/j})_{\sharp}T_1^{(j)}\to T_1^{(\infty)}$ being constrained minimizing in $T_{y_{\infty}}^+M \cong \RR^{n+1}_+$. The convergent is both in current and in varifold sense. Therefore, by (\ref{Appendix_Miranda '71, small grad}) and that $U_1^{(j)}\subset U_0^{(j)}\setminus V$, $T_1^{(\infty)}=\partial [U_1^{(\infty)}]$ for some $spt(U_1^{(\infty)})\subset T_{y_\infty}\partial M$, which is $n+1$ negligible. In particular, $T_1^{(\infty)} = 0$; On the other hand, by $spt(T_1^{(j)})\cap B_{1/j}(y_j)\neq \emptyset$, lemma \ref{Lem_Constr stny_Bdy Mon} and corollary \ref{Cor_Constr stny_Bdy density lbd}, $T_1^{(\infty)}\neq 0$ and we get a contradiction.
     \end{proof}

  \bibliographystyle{plain}
  \bibliography{GMT_Obst}
\end{document}